\newtheorem{theorem}{Theorem}[section]
\newtheorem{lemma}[theorem]{Lemma}
\newtheorem{corollary}[theorem]{Corollary}
\newtheorem{remark}[theorem]{Remark}
\newtheorem{definition}[theorem]{Definition}
\numberwithin{equation}{section}
\numberwithin{figure}{section}
\renewcommand{\rho}{\varrho}
\renewcommand{\phi}{\varphi}
\newcommand{\R}{\mathbb{R}}
\newcommand{\IND}{{\bf 1}}
\newcommand{\scalar}[2]{\langle #1 , #2\rangle}
\newcommand{\tmix}{T_{\rm mix}}
\newcommand{\dual}{\mathrm{d}}
\DeclareMathOperator{\var}{Var}
\DeclareMathOperator{\dist}{dist}
\DeclareMathOperator{\Ent}{Ent}
\newcommand{\be}{\begin{equation}}
\newcommand{\cC}{\ensuremath{\mathcal C}} 
\newcommand{\cD}{\ensuremath{\mathcal D}} 
\newcommand{\cF}{\ensuremath{\mathcal F}}
\newcommand{\bbE}{{\ensuremath{\mathbb E}} }
\newcommand{\bbN}{{\ensuremath{\mathbb N}} }
\newcommand{\bbR}{{\ensuremath{\mathbb R}} }
\newcommand{\bbZ}{{\ensuremath{\mathbb Z}} }
\newcommand{\si}{\sigma} 
\newcommand{\ent}{{\rm Ent} } 
\newcommand{\wt}{\widetilde } 
\newcommand{\tc}{\mid}
\let\a=\alpha \let\b=\beta   \let\d=\delta
\let\r=\rho  \let\s=\sigma \let\t=\tau
\let\O=\Omega
\def\({\left(}
\def\){\right)}
\def\eric#1{\marginpar{$\leftarrow$\fbox{E}}\footnote{$\Rightarrow$~{\sf #1 --Eric}}}
\newcommand{\PSW}{P_{\textsc{sw}}}
\newcommand{\PSWI}{P^{(i)}_{\textsc{sw}}}
\newcommand{\PHB}{P_{\textsc{hb}}}
\newcommand{\PSB}{P_{\textsc{sb}}}
\newcommand{\PSWE}{{\wt P}_{\textsc{sw}}}
\newcommand{\Joint}{\O_{\textsc{j}}}
\newcommand{\EdgeSet}{\mathbb E}
\newcommand{\ignore}[1]{}
\newcommand{\integers}{{\mathbb Z}}
\title{Entropy decay in the Swendsen-Wang dynamics on $\mathbb{Z}^d$}
\author{
	Antonio Blanca\thanks{Department of Computer Science and Engineering, Penn State, University Park, PA 16801.
		Email: {ablanca@cse.psu.edu}. 
		Research supported in part by NSF grant CCF-1850443.}
	\and
	Pietro Caputo\thanks{Department of Mathematics, University of Roma Tre, Largo San Murialdo 1, 00146 Roma, Italy. Email: {pietro.caputo@uniroma3.it}}
	\and
	Daniel Parisi\thanks{Department of Mathematics, University of Roma Tre, Largo San Murialdo 1, 00146 Roma, Italy. Email: {daniel.parisi@uniroma3.it}}
	\and
	Alistair Sinclair\thanks{Computer Science Division, U.C. Berkeley, Berkeley, CA 94720. Email: {sinclair@cs.berkeley.edu}. Research supported in part by NSF grant CCF-1815328.}
	\and
	Eric Vigoda\thanks{School of Computer Science, Georgia Tech, Atlanta, GA 30332.
		Email: {vigoda@gatech.edu}. Research supported in part by NSF grant  CCF-2007022.}		
}
\begin{document}

\maketitle	

\begin{abstract}
We study the mixing time of the {\em Swendsen-Wang} dynamics
for the ferromagnetic Ising and Potts models on the integer lattice~${\mathbb Z}^d$.
This dynamics is a widely used Markov chain that has largely resisted sharp analysis
because it is {\em non-local}, i.e., it changes the entire configuration in one step.  
We prove that, whenever {\em strong spatial mixing (SSM)} holds, the mixing time on any
$n$-vertex cube in~${\mathbb Z}^d$ is $O(\log n)$, 
and we prove this is tight
by establishing a matching lower bound on the mixing time.
The previous best known bound was~$O(n)$.  SSM is a standard condition corresponding
to exponential decay of correlations with distance between spins on the lattice and
is known to hold in $d=2$ dimensions throughout the high-temperature (single phase)
region.  Our result follows from a {\em modified log-Sobolev inequality},
which expresses the fact that the dynamics contracts relative entropy at a constant
rate at each step.  The proof of this fact utilizes a new factorization of the entropy
in the joint probability space over spins and edges that underlies the Swendsen-Wang
dynamics, which extends to general bipartite graphs of bounded degree. 
This factorization leads to several additional results, including mixing
time bounds for a number of natural local and non-local Markov chains on the
joint space, as well as for the standard random-cluster dynamics.
\end{abstract}

\thispagestyle{empty}

\newpage

\setcounter{page}{1}

\section{Introduction}

The ferromagnetic Potts model is 
a classical spin system in statistical physics and theoretical computer science.
%with applications in machine learning~\cite{Hinton2,SLbm,Hinton1}, computer vision~\cite{GG,Roth}, computational biology~\cite{Felsenstein,DMR} and social networks~\cite{MontSab}.
It is specified by a finite graph $G=(V,\EdgeSet)$, a set of {\it spins\/} (or colors)
$[q] = \{1,\dots,q\}$,
and an {\it edge weight\/} or inverse temperature parameter $\beta  > 0$.
A configuration $\sigma \in \Omega=\{1,\dots,q\}^V$ 
of the model assigns a spin value
to each vertex $v \in V$, and  
the probability of finding the
system in a given configuration~$\sigma$ is given by the {\it Gibbs\/} (or {\it Boltzmann\/})
distribution
\begin{equation}
\label{eq:Gibbs}
\mu(\sigma) = \mu_{G,\beta}(\sigma) := \frac{1}{Z} \exp(-\b |D(\s)|),
\end{equation}
where $D(\s) := \{\{v, w\} \in \EdgeSet : \s_v \neq \s_w\}$ is the set of edges whose endpoints
have disagreeing spins in $\s$
and $Z := \sum_{\sigma\in\Omega}  \exp(-\b |D(\s)|)$ is the normalizing factor or \emph{partition function}.
Note that this model is {\it ferromagnetic}, in the sense that neighboring spins want to align
with each other.  The Ising model of ferromagnetism is exactly the case $q=2$.

%We focus on the classical setting
%where $G$ is a subgraph of the infinite $d$-dimensional lattice~$\bbZ^d$.
%We will mostly restrict  attention to the case where $V=\{0,\dots,\ell\}^d$ is a cube, but
%our results can be extended to more general subgraphs of~$\bbZ^d$; see Remark~\ref{rmk:regions}.
We focus on the classical setting
where $G$ is a subgraph of the infinite $d$-dimensional lattice~$\bbZ^d$.
We will mostly restrict attention to the case where $V=\{0,\dots,\ell\}^d$ is a cube, but
our results can be extended to more general subgraphs of~$\bbZ^d$; see Remark~\ref{rmk:regions}.
In fact, our main technical contributions apply to general bipartite graphs of bounded degree.

A popular Markov chain for sampling from the Gibbs distribution \eqref{eq:Gibbs} is the 
{\it Swendsen-Wang (SW)\/} dynamics~\cite{SW}, which utilizes the random-cluster representation
of the Potts model to derive a sophisticated {\it non-local\/} Markov chain in which every vertex
can update its spin in each step. From the current spin configuration $\s(t) \in \Omega$, the SW dynamics generates $\s(t+1) \in \Omega$ as follows:
\begin{enumerate}
	\item Let $M(\s(t)) = \EdgeSet \setminus D(\s(t))= \{\{v, w\} \in \EdgeSet : \s_v(t) = \s_w(t)\}$ be the set of monochromatic edges of $G$ in $\s(t)$.
	\item Independently for each edge $e \in M(\s(t))$, retain~$e$ with probability $1 - \exp(-\b)$
	and delete it otherwise, resulting in the subset $A(t) \subseteq M(\s(t))$.  (This is 
	equivalent to performing bond percolation with probability $1 - \exp(-\b)$ on the
	subgraph $(V,M(\s(t)))$.)
	\item For each connected component $\cC$ in the subgraph $(V,A(t))$, independently choose a spin $s_{\cC}$ uniformly at
	random from $[q]$ and assign $s_{\cC}$ to all vertices in $\cC$, yielding $\s(t+1) \in \Omega$. 
\end{enumerate}
The Swendsen-Wang dynamics is ergodic, and has~\eqref{eq:Gibbs} as its stationary distribution; 
see~\cite{ES} for a proof.

This non-local dynamics has the ability to flip large regions of spins in one step and was thus originally proposed as an alternative algorithm for overcoming the slow convergence at low temperatures of the Glauber dynamics, the standard local Markov chain that updates the spin of a single, randomly chosen vertex in each step. 
%\pietro{I removed one intermediate sentence about polynomial time here which seemed confusing} 
%In this regime, we know that the SW dynamics requires at most polynomial (in $|V|$) many steps to converge, whereas the Glauber dynamics requires exponentially many. On the other hand, 
At high temperatures where the Glauber dynamics is quite fast, the SW dynamics provides a powerful alternative sampling algorithm since one can efficiently parallelize its global steps.

In this paper, we are interested in the speed of convergence of the SW dynamics to stationarity, 
and in particular its \emph{mixing time}. The mixing time captures the convergence rate in
total variation distance of a Markov chain from the worst possible starting configuration and 
is the most standard measure of the speed of convergence. 
Results proving tight bounds for the mixing time of the SW dynamics are rare, and are 
limited to very special classes of graphs, such as the complete graph and 
trees \cite{Huber,LNNP,GSVmf,BSmf}, 
or to very high temperatures~\cite{MOSc,NS}. 
Most bounds for the mixing time of the SW dynamics are derived by comparison with the Glauber dynamics~\cite{Ullrich1}, and are consequently often very far from sharp. 
We also know of multiple examples where the mixing time of the SW dynamics is exponential in the number of vertices of the graph; see, e.g.,~\cite{GJ,GSVmf,BSmf,GLP,BCFKTVV,BCT}.

There is a long line of work studying the connection between 
{\it spatial mixing\/} (i.e., decay of correlations) properties of Gibbs distributions
and the speed of convergence of Markov chains~(see, e.g., \cite{Holley,AH,Zeg,SZ,MOI,MOII,Cesi,DSVW,MS}). 
These results focus on local Markov chains, such as the Glauber dynamics, 
but there has also been some recent progress in understanding this connection for non-local Markov chains such as the SW dynamics~\cite{BCV,BCSV,CP}. 
In particular, it was established in~\cite{BCSV} that the {\it strong spatial mixing (SSM)\/} property
implies that the mixing time $\tmix(SW)$ of the SW dynamics is $O(n)$, where $n := |V|$ is the number of vertices.

SSM is a standard formalization of decay of correlations in spin systems
and, roughly speaking, expresses the fact that the correlation between spins at different
vertices decreases exponentially with the distance between them.
More precisely, given a pair of fixed configurations $\psi$ and $\psi_u$ on the boundary of $V$ such that $\psi$ and $\psi_u$ differ only in the spin of the vertex $u$, the
effect on the (conditional) marginal distribution at a set $B \subset V$ 
decays exponentially with the
distance between $B$ and the disagreement at $u$;
see Section~\ref{sec:backgnd} for a precise definition.
%\antonio{Reworded it and matched the notation in definition.  Check.}
Our main algorithmic result in this paper is that the mixing time of the SW
dynamics is in fact $O(\log n)$ whenever SSM holds, and this is tight.

\begin{theorem}
	\label{thm:intro:sw}
	In an $n$-vertex cube of $\bbZ^d$,
	for all integer $q \ge 2$,  SSM implies that for all boundary conditions $\tmix(SW) = \Theta(\log n)$.
\end{theorem}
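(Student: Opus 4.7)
The plan is to deduce $\tmix(SW) = O(\log n)$ from a modified log-Sobolev inequality (MLSI) for the SW dynamics with constant (i.e.\ $n$-independent) contraction rate, and to obtain the matching lower bound by a standard argument. For the upper bound, it suffices to prove $\Ent_\mu(\PSW f) \le (1-\alpha)\, \Ent_\mu(f)$ for some $\alpha = \alpha(d,q,\beta) > 0$: iterating $t$ times and using $\Ent_\mu(f) \le \log(1/\mu_{\min}) = O(n)$ for the initial relative density gives $\Ent_\mu(\PSW^t f) \le 1$ after $t = O(\log n / \alpha)$ steps, and Pinsker's inequality then bounds the total variation distance by a constant. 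The $\Omega(\log n)$ lower bound is standard: even a single SW step changes the distribution only by a bounded factor in entropy, so the chain cannot forget an $\Omega(\log(1/\mu_{\min})) = \Omega(n)$ amount of initial entropy in $o(\log n)$ steps.

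\textbf{Lifting to the Edwards--Sokal joint space.} To establish the MLSI I would pass to the joint space $\Joint = [q]^V \times \{0,1\}^{\EdgeSet}$, whose stationary measure $\nu$ is the Edwards--Sokal coupling, with $\sigma$-marginal $\mu$ and $A$-marginal the $q$-state random-cluster measure. One SW step is precisely one sweep of the two-block heat-bath sampler on $\Joint$: resample $A$ from $\nu(\cdot\mid\sigma)$ (percolation on monochromatic edges) and then $\sigma$ from $\nu(\cdot\mid A)$ (uniform spin per connected component). Standard identities relate $\Ent_\mu(\PSW f)$ to the conditional entropies of the lifted density $g$ on $\Joint$, so the MLSI for SW reduces to proving the ``two-block'' entropy factorization
\[
\Ent_\nu(g) \;\le\; C \,\Bigl(\, \nu\bigl[\Ent_\nu(g\mid\sigma)\bigr] + \nu\bigl[\Ent_\nu(g\mid A)\bigr] \,\Bigr)
\]
with a constant $C$ that is independent of $n$ and of the boundary condition.

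\textbf{The main obstacle: factorization via bipartition and SSM.} The heart of the argument, and the step I expect to be hardest, is proving this factorization from SSM; this is the ``new factorization of entropy on the joint spin--edge space'' announced in the abstract. My plan is to exploit the bipartite structure $V = V_1 \sqcup V_2$ of $\bbZ^d$ as follows. Conditional on the spins on $V_1$, the pair (spins on $V_2$, edges) has a clean product form across the vertices of $V_2$ --- since every edge is incident to exactly one vertex of $V_2$, the edges incident to distinct $v\in V_2$ are conditionally independent --- which lets one integrate out the edge variables and reduce a joint entropy inequality to an entropy inequality on the ordinary Potts measure on $V$ with a boundary condition on $V_1$. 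Symmetrically, one can condition on $V_2$. For the resulting Potts entropy one then applies a Cesi--Martinelli-style block factorization: under SSM, correlations in $\mu$ decay exponentially in lattice distance, so the conditional entropy on a region is controlled, up to a constant, by the sum of conditional entropies on overlapping sub-cubes of constant side. Combining the bipartite decoupling with this SSM-based block factorization should deliver the required two-block factorization above, and hence Theorem~\ref{thm:intro:sw}. The delicate point is that the edge variables in $\nu$ create long-range correlations among the spins, so SSM cannot be applied directly in the joint space; the whole point of routing through the bipartite decomposition is to perform the block-factorization step in the Potts (rather than random-cluster) marginal, where SSM is available, and to do so with constants that are truly uniform in $n$ and in the boundary condition.
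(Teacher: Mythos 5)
Your upper-bound strategy matches the paper's closely: lift to the Edwards--Sokal joint space, reduce the MLSI for $\PSW$ to the two-block (spin/edge) entropy factorization, then prove that factorization by conditioning on one side of the bipartition so that $(\sigma_{V_2}, A)$ becomes a product over $v\in V_2$, and finally invoke an SSM-driven factorization for the Potts marginal. This is essentially Lemmas~\ref{lem:main-intro-se-to-mixing}, \ref{lem:tensor} and \ref{lem:conc} in the paper. Two details you gloss over: (i) after conditioning on, say, $\sigma_O$, one still pays a per-site cost for separating the local pair $(\sigma_x, A_x)$ --- this is a genuine step (Lemma~\ref{lem:tensorx}, a single-site spectral-gap/LSI bound), not a free integration-out of the edges; and (ii) the Potts-side factorization you need is the \emph{even/odd} block factorization of Caputo--Parisi rather than a single-site or constant-radius-block Cesi-type statement, precisely because the bipartite conditioning produces even and odd blocks. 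These are refinements, not conceptual divergences.

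The lower bound, however, is where your argument breaks. You assert that ``even a single SW step changes the distribution only by a bounded factor in entropy,'' but this is false for a non-local chain like Swendsen--Wang, which can resample every cluster simultaneously: on the complete graph at high temperature the SW dynamics mixes in $\Theta(1)$ steps (a fact the paper itself points out), so a single step can destroy $\Omega(n)$ bits of entropy. There is no generic lower bound on the per-step entropy contraction of $\PSW$, and consequently no $\Omega(\log n)$ lower bound follows ``for free'' from counting entropy. The paper's $\Omega(\log n)$ lower bound (Theorem~\ref{thm:lb}) is itself a substantive result requiring SSM and a specific construction: a Hayes--Sinclair-style initial distribution restricted to a well-separated grid of boxes, an analysis of disagreement propagation speed under a coupled SW step using the notion of $L$-shattered configurations, the spectral positivity of $\PSW$, and exponential cluster-size decay from Duminil-Copin--Raoufi--Tassion. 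Your proposal omits all of this, and as stated the lower bound does not hold.
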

We recall that a {\it boundary condition\/}~$\tau$ for the Potts model is a fixed assignment of spins to 
the boundary of $V$; in the presence of a boundary condition, we consider the Gibbs distribution on~$V$ 
conditional on the assignment~$\tau$ on the boundary of $V$. The case where there is no boundary condition is known as the {\it free boundary\/} case and is also covered by our results.

%We believe that the mixing time bound in Theorem~\ref{thm:intro:sw} is tight; indeed, this
%is known for periodic boundary conditions at sufficiently high temperatures~\cite{NS}.

%We remark that all of our results, and in particular Theorem~\ref{thm:intro:sw}, hold for arbitrary rectangular regions $V=\{0,\dots,n_1\}\times\cdots\times\{0,\dots,n_d\}$ provided $\min_i n_i\geq n_0$ for some fixed $n_0$.\antonio{Added. Check.}
%\eric{I suggest dropping this requirement $\min_i n_i\geq n_0$ here, and also in Definition 2.1 -- perhaps, need to 
%add a sentence in the definition
%or afterwards to say what SSM holds for squares means.  \\
%And adding ``squares'' to the next sentence (as Pietro pointed out, SSM only holds for squares) and also adding it to the following corollary. 
%The discussion around Theorem 2.5 in Martinelli's lecture notes was helpful for my understanding.}

In $\bbZ^2$, SSM is known to hold for all $q \ge 2$ and all $\beta < \beta_c(q)$,
where $\beta_c(q) = \ln (1+\sqrt{q})$ 
is the uniqueness threshold \cite{BDC,KA1,MOS}. 
Therefore, we obtain the following immediate corollary of Theorem~\ref{thm:intro:sw}.

\begin{corollary}
	\label{cor:sw:intro}
	In an $n$-vertex square region of $\bbZ^2$, for all $q \ge 2$, all $\beta <\beta_c(q)$ and all boundary conditions, we have 
	$\tmix(SW) = \Theta(\log n)$.
\end{corollary}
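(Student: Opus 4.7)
The plan is to derive the corollary as a direct consequence of Theorem~\ref{thm:intro:sw} by verifying that its hypothesis (SSM) is satisfied in the claimed parameter range. Concretely, I would check that, in any $n$-vertex square region of $\bbZ^2$, the strong spatial mixing property holds for the ferromagnetic $q$-state Potts Gibbs distribution at every inverse temperature $\beta<\beta_c(q)=\ln(1+\sqrt{q})$ and under every boundary condition $\tau$. Given this, the conclusion $\tmix(SW)=\Theta(\log n)$ is immediate from Theorem~\ref{thm:intro:sw}.

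The SSM input itself is assembled from known two-dimensional results. First, the identification of the critical inverse temperature $\beta_c(q)=\ln(1+\sqrt{q})$ with the uniqueness/phase-transition threshold for all $q\ge 2$ in~$\bbZ^2$ is provided by~\cite{BDC} (sharpness of the phase transition via the random-cluster representation and its self-duality). Second, exponential decay of correlations for the random-cluster model in the subcritical regime, upgraded to the full SSM property for the Potts spin system uniformly in boundary conditions, is established in~\cite{KA1,MOS}. Combining these, SSM is available for every $\beta<\beta_c(q)$ and every boundary condition $\tau$, which is exactly what Theorem~\ref{thm:intro:sw} requires.

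Finally, the matching lower bound $\tmix(SW)=\Omega(\log n)$ already appears in the statement of Theorem~\ref{thm:intro:sw}, so there is nothing additional to prove on that side. The only conceivable obstacle would be a mismatch between the boundary-condition assumptions under which SSM is known and the ones required by Theorem~\ref{thm:intro:sw}; since SSM in two dimensions is known to hold uniformly over all boundary conditions throughout $\beta<\beta_c(q)$, this obstacle does not arise, and the corollary follows.
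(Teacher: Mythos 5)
Your proposal follows exactly the paper's route: it invokes the known fact (from \cite{BDC,KA1,MOS}) that SSM holds in $\bbZ^2$ for all $q\ge 2$ and $\beta<\beta_c(q)$ uniformly over boundary conditions, then applies Theorem~\ref{thm:intro:sw} to obtain both the $O(\log n)$ upper bound and the $\Omega(\log n)$ lower bound. This is precisely how the corollary is derived in the paper, so the argument is correct and essentially identical.
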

The best previous bound in the setting of Corollary~\ref{cor:sw:intro} was $\tmix(SW) = O(n)$ and follows from the results in~\cite{BCSV}.
Nam and Sly \cite{NS} recently proved an $O(\log{n})$ mixing time bound (as well as the cutoff phenomenon) for the periodic boundary condition for sufficiently high temperatures ($\beta \ll \beta_c(q)$), a 
stronger assumption than SSM. 
In higher dimensions $d \ge 3$, SSM is not known to hold up to the corresponding uniqueness threshold (it is only known for sufficiently small $\beta$; see~\cite{Mart}), 
but we expect the SW dynamics to be rapidly mixing throughout the high temperature regime for all $d \ge 3$.

The key to our improved mixing time analysis is a novel {\it factorization of entropy\/}
based on the joint probability space of spins and edges that underlies the SW dynamics.  This
factorization implies that the relative entropy decays at a constant rate, which in turn 
implies a tight bound on the mixing time via a modified log-Sobolev inequality.
In contrast, previous bounds for the SW dynamics~\cite{Ullrich1,BSz2,GuoJerrum,BCSV,BCV} 
have used the spectral gap,
which inherently loses a factor of~$O(n)$ 
when transferred to mixing time bounds
and cannot deliver a tight result.  We discuss our
new technique and its ramifications in the next subsection.

{\it A priori\/} the correct order of the mixing time of the SW dynamics is unclear.
In some settings, such as on the complete graph (the mean-field Potts model)
%for example, for all $q\geq 2$, in the $q$-state mean-field Potts model the mixing time is $\Theta(1)$ 
for all $\beta$ below the uniqueness threshold, the dynamics mixes in $\Theta(1)$
steps~\cite{LNNP, GSVmf}.
%for all $\beta<\widehat{\beta}_c(q)$ where $\widehat{\beta}_c(q)$ is the 
%corresponding uniqueness threshold~\cite{LNNP, GSVmf, BSmf}.  
In this paper, to complement our main result of an $O(\log{n})$ upper bound, 
we also establish a lower bound of $\Omega(\log{n})$
for all boundary conditions whenever SSM holds. 
%\pietro{slightly changed sentence on our proof of lower bound} 
To prove
our lower bound, we follow the strategy introduced by % proof is a non-trivial adaptation of ideas of 
Hayes and Sinclair~\cite{HS}, who   
%The lower bound result presented in~\cite{HS}
%is for 
proved a tight lower bound on the mixing time of the local Glauber dynamics. However, the synchronicity and non-locality of the updates in the SW dynamics presents a significant obstacle to the adaptation of their techniques and some new ideas are required. The main novel ingredient in our proof of the lower bound is an analysis of the speed of propagation of disagreements under a coupling of the steps of the SW dynamics, provided SSM holds.
To establish this we use a recent breakthrough result of Duminil-Copin, Raoufi, and Tassion~\cite{DRT}
establishing exponential decay of correlations (i.e., weak spatial mixing) in the entire high-temperature phase for the Potts model on $\bbZ^d$.
We believe that our analysis of the speed of disagreement propagation could be useful for
establishing lower bounds for the mixing time of SW dynamics in other settings.

Our methods also provide new results for the low-temperature regime $\beta > \beta_c(q)$ in $\bbZ^2$ for specific boundary conditions. 
We say that a boundary condition $\tau$ is {\it monochromatic\/} if $\tau$ fixes the spin of every boundary vertex to the same color. 
One of the most fundamental open problems in the study of
the Glauber dynamics for the Ising and Potts model concerns the mixing time
at low temperatures with a monochromatic boundary~\cite{MarTo,LMST}. 
%in this setting where it is conjectured to mix polynomially 
%in $|V|$ but only quasi-polynomial bounds are known.
We provide new bounds for the mixing time of the SW dynamics in this setting.

\begin{theorem}
	\label{cor:sw-lt:intro}
	In an $n$-vertex square region of $\bbZ^2$, for all $q \ge 2$ and all $\beta > \beta_c(q)$ we have	$\tmix(SW) = O(n \log n)$ for the free or monochromatic boundary condition.
\end{theorem}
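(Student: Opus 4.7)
The plan is to extend the entropy-factorization framework used for Theorem~\ref{thm:intro:sw} to the low-temperature regime, where SSM fails in general but planar duality rescues the two specific boundary conditions in question. On $\bbZ^2$ the random-cluster (FK) measure is self-dual at $\beta_c(q)=\ln(1+\sqrt q)$, so for $\beta>\beta_c(q)$ the dual parameter $p^*=p^*(\beta)$ is strictly subcritical. By the sharpness results of Beffara--Duminil-Copin and Duminil-Copin--Raoufi--Tassion, the subcritical FK measure on $\bbZ^2$ enjoys exponential decay of correlations for both free and wired boundaries, which in two dimensions upgrades to SSM on the dual lattice. Via Edwards--Sokal, free (resp.\ monochromatic) Potts boundary on $V$ corresponds to free (resp.\ wired) FK boundary on $V$, whose planar dual is wired (resp.\ free) FK on the dual region; in both cases the dual lies in the subcritical phase where SSM is available.

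I would proceed as follows. First, pass to the joint spin--edge representation underlying the SW dynamics and reduce the mixing time of SW to a contraction of relative entropy under the joint chain, exactly as in the proof of Theorem~\ref{thm:intro:sw}. Second, establish a block factorization of entropy on $V$ using blocks of a large constant side length; this step uses the dual SSM to control the influence of the block boundary on the FK-conditional distribution inside each block, after first revealing the dual open edges outside. Third, unlike the high-temperature case, one does not get an $O(1)$ factorization constant: comparing the constant-size-block dynamics with the single-site updates hidden inside one SW step costs a factor of $n$, yielding a modified log-Sobolev constant of order $1/n$. Fourth, convert via the standard MLSI-to-mixing inequality; since $\log \pi_{\min}^{-1}=O(n)$ on an $n$-vertex square region, an MLSI of order $1/n$ translates into the mixing time bound $O(n\log n)$.

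The main obstacle, I expect, is converting dual SSM into a usable entropy inequality on the primal joint space. In the high-temperature proof, SSM directly governs conditional spin marginals and plugs straight into the joint-space factorization. Here SSM lives on the dual FK model, and one must push it back through the Edwards--Sokal coupling while accounting for the macroscopic features of the ordered primal phase, most notably a giant wired FK cluster under the monochromatic boundary. A natural workaround is to first expose the dual open edges outside each block and then invoke FKG monotonicity together with finite-energy so that whatever residual boundary condition is induced on the block is controlled at constant range, where dual SSM applies. The monochromatic case additionally requires checking that the wired FK boundary is compatible with the joint-space factorization identities developed for Theorem~\ref{thm:intro:sw}; this is plausible but needs to be verified carefully, and is the step where the factor-of-$n$ loss is incurred rather than avoided.
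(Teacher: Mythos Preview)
Your high-level instinct---use planar duality to land in the subcritical FK phase where SSM is available---is correct and matches the paper. But the concrete mechanism you sketch has a genuine gap: you propose to establish a block factorization of entropy for the \emph{primal} Potts/joint measure, using dual SSM after ``revealing the dual open edges outside each block.'' It is unclear how dual SSM (a statement about correlations of the dual FK measure) translates into an entropy factorization inequality for the primal spin measure. At low temperature with free boundary, the primal Potts measure does \emph{not} satisfy SSM (there is phase coexistence), and for the monochromatic boundary, primal SSM is precisely one of the major open problems the paper mentions. So you cannot expect even/odd or block factorization to hold on the primal side with any useful constant, and your proposal does not explain how conditioning on dual edges circumvents this.

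The paper takes a different and cleaner route that avoids primal-side factorization entirely. It works with the random-cluster SW dynamics $\PSWE$ first. The key observation is that the \emph{heat-bath} Glauber dynamics for the random-cluster model is exactly self-dual at the level of Dirichlet forms: $\cD_{\PHB}(\sqrt f,\sqrt f)=\cD_{\PHB^\dual}(\sqrt{f_\dual},\sqrt{f_\dual})$. On the dual side $p_\dual<p_c(q)$, so SSM holds, and the local entropy factorization \eqref{swent106} established in Section~\ref{subsec:local} yields a \emph{standard} log-Sobolev inequality for $\PHB^\dual$ with constant $\Theta(1/n)$ (the $1/n$ arises from the $1/|\bbE|$ normalization in the single-bond/heat-bath Dirichlet form, not from any block comparison). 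Duality transfers this LSI back to $\PHB$ at low temperature; an Ullrich-type Dirichlet form comparison (Lemma~\ref{lemma:df1}) then gives the same LSI for $\PSWE$. Miclo's result (Lemma~\ref{lem:miclo}) converts standard LSI into discrete-time entropy decay with rate $\Theta(1/n)$, hence $\tmix(\PSWE)=O(n\log n)$. Finally, a separate transfer lemma (Lemma~\ref{pro:SWs}) moves entropy decay from the edge SW dynamics to the spin SW dynamics via the identity $\PSW f = KTf$, $\PSWE g = TKg$. The crucial point you are missing is that duality acts cleanly on Dirichlet forms (hence on the \emph{standard} LSI), whereas it is far from obvious how to dualize the modified LSI or entropy factorization statements you are aiming for.
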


The best previously known bound for the mixing time of the SW dynamics 
in an $n$-vertex square region of $\bbZ^2$
when $\beta > \beta_c(q)$ was $O(n^2 \log^2 n)$, which follows from the results
in~\cite{BSz2,Ullrich1}; see also~\cite{MarSW} for better (sub-linear) bounds
for the mixing time when $q=2$ and $\beta \gg \beta_c(q)$.
The bound in Theorem~\ref{cor:sw-lt:intro} is likely not tight, and
establishing that the SW dynamics mixes in $O(\log n)$ steps in $\bbZ^2$ throughout the low-temperature regime remains an important open problem.
Furthermore, our result for low temperature with a monochromatic boundary does not extend to higher dimensions $d \ge 3$, since it crucially uses the self-duality for the associated random-cluster distribution on $\bbZ^2$. For the Ising model with $d \ge 3$, 
the state of the art seems to be the results from~\cite{MarSW} for $\beta \gg \beta_c(q)$.

\subsection{A new analytic tool}

A standard tool for the analysis of Markov chains are comparison inequalities, which
relate the spectral gap of a chain of interest to that of some
simpler chain that has already been analyzed. %  , e.g., see~\cite{JSTV}.
This approach has proven particularly useful for analysis of the SW dynamics;
indeed, some of the currently best known upper bounds on its mixing time are obtained in this manner~\cite{Ullrich1,BSz2,GuoJerrum,BCSV,BCV}. 
%For example, the currently best known upper bounds on the mixing time of the SW dynamics
%follow from comparison with the Glauber dynamics in the random-cluster
%model~\cite{Ullrich1,BSz2,GuoJerrum}, or with the isolated vertex dynamics~\cite{BCSV,BCV}.
%Standard comparison inequalities relate the spectral gaps of the Markov chains, and 
%indeed this is the case for 
%All these results follow from comparison inequalities for the spectral gap. 
As mentioned earlier, this approach is unable to yield tight bounds on the mixing time
of the SW dynamics since bounds obtained via the spectral gap inherently introduce
a factor $\log(1/\mu_*)=\Omega(n)$ on the mixing time, where $\mu_* = \min_\si \mu(\si)$.

A potentially more powerful approach is to compare instead the (classical) log-Sobolev constants 
(see Definition~\ref{def:LSI}).  This yields mixing time bounds with only a 
$\log\log(1/\mu_*)=O(\log n)$ dependence on~$\mu_*$, which is potentially tight.
Unfortunately, however, log-Sobolev inequalities are not tight for the SW dynamics,
and the best possible mixing time bound obtained in this way would be~$O(n)$ 
(see Remark~\ref{rmk:log-sob} for details).

%\alistair{Substantially modified from here onwards; plse check carefully!}
A {\em modified\/} log-Sobolev inequality (which essentially bounds the rate of decay of
relative entropy; see again Definition~\ref{def:LSI}) is a strictly weaker (and hence easier to satisfy)
inequality than the classical log-Sobolev inequality, but still strong enough to establish mixing time
bounds with the same dependence on~$\mu_*$.  
There have been several notable recent results
bounding the modified log-Sobolev constant for various Markov chains~\cite{CGM,HermonSalez}.
However, there are no prior results addressing the modified log-Sobolev constant
for the SW dynamics and, more generally, no comparison inequalities are available
for the modified log-Sobolev constant.
In this paper, we develop new machinery that essentially allows us to compare the
modified log-Sobolev contant of the SW dynamics with that of a much simpler dynamics,
and hence obtain tight mixing time bounds.  This comparison is at the level
of {\it entropy factorization\/} rather than the modified log-Sobolev constant itself,
as we now describe.

%Approximate factorization is closely-related to modified log-Sobolev inequalities and 
%provides tight bounds on the mixing time.
{\it Approximate factorization\/} of the entropy of the Gibbs distribution~$\mu$ 
with constant $C$
says that, for any nonnegative function $f:\Omega \mapsto \bbR_+$,
\begin{equation}
\label{ineq:approx-factorization}
\ent_\mu(f) \leq C \sum_{v\in V} \mu[\ent_v(f)],
\end{equation}
where 
$\mu[f] = \sum_{\sigma \in \Omega} \mu(\sigma)f(\sigma)$
and
$\ent_\mu(f) := \mu[f \log (f/\mu[f])]$ are the expectation and entropy, respectively, of~$f$ with respect to $\mu$,
and $\ent_v$ is the entropy with respect to the conditional distribution at vertex~$v$ given the spins of its neighbors.
Note that necessarily $C\ge 1$, and $C=1$ when $\mu$ is a product measure.
(The term``constant'' here indicates that~\eqref{ineq:approx-factorization} holds for
fixed~$C$, independent of~$f$.  In most applications, $C$ will in fact be a
constant independent of the size of the underlying graph~$G$; we will write $C=O(1)$ to 
indicate this.)  
Approximate factorization with $C=O(1)$
played a central role in classical results proving that SSM implies $O(n\log{n})$
mixing time of the Glauber dynamics; see, e.g.,~\cite{Zeg,SZ,MOII,Cesi}.

Until recently it was unclear how to apply this approach to more general Markov chains.
However, in a very recent paper, Caputo and Parisi~\cite{CP} took an important step in
this direction by extending the above factorization as follows:
\begin{definition}
	\label{def:even-odd}
	For a spin configuration~$\si$ on a bipartite graph~$G$, let $\si_E$ (respectively, $\si_O$)
	denote the spins on the even (respectively, odd) side of the bipartition.
	We say that {\em approximate even/odd factorization}
	with constant $C$ holds if
	% holds if 
	%there exists a constant $C>0$ such that, 
	for all functions $f: \Omega \mapsto \bbR_+$,
	\begin{equation}\label{eq:eofactor}
	\ent_\mu \left(f\right) \leq C(\mu\left[\ent_\mu (f\tc \si_E)\right] + \mu\left[\ent_\mu (f\tc \si_O)\right]).
	\end{equation}
\end{definition}
\noindent
%(The constant $C$ may depend on the maximum degree $\Delta$ of the graph $G$ but is independent of $n$.)
%\pietro{added this sentence to clarify the meaning of \eqref{eq:eofactor}}
To clarify the meaning of the inequality \eqref{eq:eofactor}, we use the notation $\ent_\mu (f\tc \si_E)$ for the entropy of $f$ with respect to the conditional probability $\mu(\cdot\tc\si_E)$, that is the Gibbs measure conditioned on a given realization of the even spins $\si_E$, with similar notation for odd spins. In particular, taking the expectation one has
\[
\mu\left[\ent_\mu (f\tc \si_E)\right] =  \mu[f \log (f/\mu[f\tc\si_E])]\,,\quad \mu\left[\ent_\mu (f\tc \si_O)\right] =  \mu[f \log (f/\mu[f\tc\si_O])].
\]
 Caputo and Parisi~\cite{CP} showed that for spin systems on $\bbZ^d$, under the SSM assumption, approximate
even/odd factorization holds with $C=O(1)$, and used this fact to establish a tight mixing time bound
for  ``block dynamics'' in~$\mathbb{Z}^d$, a generalization of Glauber
dynamics in which a randomly chosen block of spins (rather than a single spin)
is updated in each step.

\ignore{
	Until recently it was unclear how to apply this approach to more general Markov chains.
	However, in a very recent paper, Caputo and Parisi~\cite{CP} took an important step in
	this direction by extending the above 
	factorization approach to {\it block dynamics}, a generalization of Glauber dynamics
	in which a randomly chosen connected block of spins (rather than a single spin) is updated in
	each step.
	%A block dynamics is defined by a collection of 
	%subsets of vertices $B_1,\dots,B_\ell$ which cover $V$.  A step of the block dynamics operates by choosing
	%a subset $B_i$ uniformly at random from the collection and then updating the configuration on $B_i$ by sampling
	%from the Gibbs distribution on $B_i$ conditional on the current configuration on $V\setminus B_i$.
	By generalizing~\eqref{ineq:approx-factorization} to a suitable notion of block factorization
	(roughly, on the RHS vertices are replaced by blocks), Caputo and Parisi were able to establish 
	%approximate block factorization, and hence 
	a tight mixing time bound for block dynamics on~$\integers^d$ when SSM holds.
	
	For any bipartite graph, a simple special case of block dynamics is the {\it even-odd\/} chain,
	which updates all the vertices on the odd (respectively, even) side of the bipartition in a single
	step.  We consider the corresponding
	approximate factorization with respect to the even-odd blocks:
	\begin{definition}
		\label{def:even-odd}
		For a spin configuration~$\si$ on a bipartite graph~$G$, let $\si_E$ (respectively, $\si_O$)
		denote the spins on the even (respectively, odd) side of the bipartition.
		We say that {\em approximate even/odd factorization} holds if 
		there exists a constant $C\geq 1$ such that, 
		\eric{Have made all these constants in approx. factorization to be $\geq 1$.  Is that right?}
		for all functions $f: \Omega \mapsto \bbR_+$,
		\[
		\ent_\mu \left(f\right) \leq C\mu\left[\ent_\mu (f\tc \si_E) + \ent_\mu (f\tc \si_O)\right].
		\]
	\end{definition}
}

Our main analytic tool in this paper establishes that, on any bounded degree bipartite graph, 
even/odd factorization is in fact sufficient to 
ensure $O(\log n)$ mixing time for the much more complex SW dynamics.  
Note that SW is very far from a block dynamics, in that the configurations of
multiple, dynamically changing clusters of spins are updated simultaneously in each step.
\begin{theorem}
	\label{thm:evenodd-comparison}
	For all constant $\Delta$, for any bipartite graph of maximum degree $\Delta$, if the Gibbs distribution satisfies approximate even/odd factorization with $C=O(1)$ then 
	the mixing time of the Swendsen-Wang dynamics is $O(\log{n})$. 
	\end{theorem}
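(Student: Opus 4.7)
The plan is to establish a strong one-step entropy contraction for the Swendsen--Wang dynamics, namely $\Ent_\mu(\PSW f) \leq (1-\alpha)\Ent_\mu(f)$ for all $f \geq 0$ with $\alpha = \Omega(1)$, and then to invoke the standard fact that such a contraction implies a modified log-Sobolev inequality of the same order. Iterating the contraction from a worst-case starting distribution, the KL divergence to $\mu$ shrinks as $e^{-\alpha t}\log(1/\mu_*)$; since $\log(1/\mu_*) = O(n)$ on a bounded-degree graph, Pinsker's inequality gives $\tmix(\PSW) = O(\alpha^{-1} \log \log (1/\mu_*)) = O(\log n)$.

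To produce the contraction, I lift the analysis to the Edwards--Sokal joint probability space with measure $\nu$ on pairs $(\sigma, A) \in \Omega \times 2^{\EdgeSet}$, whose spin marginal is $\mu$. A single SW step decomposes cleanly as $K_1 K_2$, where $K_2$ is the heat-bath kernel that resamples $\sigma$ from $\nu(\cdot \mid A)$ (uniform colorings of the components of $(V,A)$), and $K_1$ is the kernel that resamples $A$ from $\nu(\cdot \mid \sigma)$ (bond percolation with parameter $1 - e^{-\beta}$ on the monochromatic edges). Applying the tower identity for entropy to each heat-bath step yields
\[
\Ent_\nu(F) - \Ent_\nu(K_1 K_2 F) = \nu\bigl[\Ent_\nu(F \mid A)\bigr] + \nu\bigl[\Ent_\nu(K_2 F \mid \sigma)\bigr].
\]
For $F(\sigma, A) = f(\sigma)$, a direct computation gives $K_1 K_2 F = \PSW f$ lifted to the joint space, so this identity quantifies the exact entropy drop under a single SW step.

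The central step is to establish an \emph{approximate joint factorization} of entropy:
\[
\Ent_\nu(F) \leq C_J \bigl( \nu[\Ent_\nu(F \mid \sigma)] + \nu[\Ent_\nu(F \mid A)] \bigr),
\]
with $C_J = O(1)$ depending only on the approximate even/odd constant $C$, the maximum degree $\Delta$, and the number of colors $q$. Given this, specializing to $F = f(\sigma)$ makes the first term vanish, giving $\nu[\Ent_\nu(f \mid A)] \geq \Ent_\mu(f)/C_J$; plugging into the identity above (and dropping the nonnegative second term) yields $\Ent_\mu(\PSW f) \leq (1 - 1/C_J)\Ent_\mu(f)$, which is exactly the desired contraction with $\alpha = 1/C_J$.

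The main obstacle is the joint factorization itself. My approach is to reduce it to the hypothesized even/odd factorization on $\mu$ plus tensorization arguments that exploit bipartiteness. The key structural observation is that conditioning on $\sigma_E$ makes $(\sigma_O, A)$ a product measure indexed by odd vertices: each odd $v$ carries the local block $(\sigma_v, \{e \in A : v \in e\})$, and distinct blocks are independent under $\nu(\cdot \mid \sigma_E)$ because every edge of a bipartite graph crosses $E \leftrightarrow O$, so no bond in $A$ can couple two odd vertices. Standard tensorization then factorizes $\Ent_{\nu(\cdot \mid \sigma_E)}(F)$ over odd local blocks (and analogously conditioning on $\sigma_O$). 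Combined with the even/odd factorization applied to the spin-marginal function $g(\sigma) = \nu[F \mid \sigma]$, the problem reduces to comparing the resulting local block entropies to $\nu[\Ent_\nu(F \mid A)]$ and $\nu[\Ent_\nu(F \mid \sigma)]$. Such comparisons are bounded-degree local computations whose constants depend only on $\Delta$ and $q$. The hard part will be controlling the cross terms that arise when passing from the spin-only entropy $\Ent_{\mu(\cdot\mid\sigma_E)}(g)$ to the joint entropy $\Ent_{\nu(\cdot \mid \sigma_E)}(F)$, since $F$ depends on both $\sigma_O$ and $A$; the bounded-degree assumption is precisely what bounds the support of each local block and keeps $C_J = O(1)$.
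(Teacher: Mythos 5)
You take essentially the same route as the paper: lift the analysis to the Edwards--Sokal joint measure $\nu$ on $(\sigma, A)$, establish the approximate factorization $\Ent_\nu(F) \leq C_J\bigl(\nu[\Ent_\nu(F\mid\sigma)] + \nu[\Ent_\nu(F\mid A)]\bigr)$ (what the paper calls \emph{spin/edge factorization}), and use it to derive one-step entropy contraction for $\PSW$. Your reduction from the joint factorization to the contraction and thence to $O(\log n)$ mixing is correct and matches the paper's Lemma~\ref{lem:main-intro-se-to-mixing}; your exact ``tower'' identity for $\Ent_\nu(F)-\Ent_\nu(K_1K_2F)$ is a slightly cleaner statement of the Jensen-based inequality the paper uses there.

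The gap is in the spin/edge factorization itself, which you only sketch. You correctly identify that $\nu(\cdot\mid\sigma_E)$ is a product over odd local blocks $(\sigma_v, A_v)$ with $A_v$ the edges incident to $v$, that entropy tensorizes over these blocks, and that even/odd factorization should be invoked on $g(\sigma)=\nu[F\mid\sigma]$. What is missing is the step that actually produces a uniform constant: first, a \emph{single-site} two-variable factorization at each $x$, of the form $\nu_x[\Ent_x(f\mid\sigma)\mid\sigma_O]+\nu_x[\Ent_x(f\mid\sigma_O,A)\mid\sigma_O] \geq \delta_1\,\Ent_x(f\mid\sigma_O)$, a local log-Sobolev-type bound on a state space of size at most $q\cdot 2^\Delta$ proved via a two-step coupling (the paper's Lemma~\ref{lem:tensorx}); and second -- this is the genuinely nontrivial part -- a \emph{telescoping lift} of this local inequality, along an ordering of the even (or odd) sites, to the product conditional measure $\nu(\cdot\mid\sigma_O)$. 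That lift needs the commutation $\nu_x^A\mu_{x-1} = \mu_{x-1}\nu_x^A$ supplied by the product structure together with the variational characterization of entropy, and is the content of the paper's Lemma~\ref{lemma:iterative:bounds}. Without it the sum of local-block conditional entropies does not automatically dominate $\nu[\Ent_\nu(F\mid\sigma)]+\nu[\Ent_\nu(F\mid\sigma_O,A)]$ with a constant, which is exactly what your ``cross terms'' remark gestures at but does not resolve; attributing the whole difficulty to bounded degree understates it, since bounded degree only controls the local state space, not the global lift. You are also implicitly relying on, and should make explicit, the monotonicity step $\nu[\Ent_\nu(F\mid A)] \geq \nu[\Ent_\nu(F\mid\sigma_E,A)]$ (the paper's Lemma~\ref{lem:conv}) that converts conditioning on $A$ into conditioning on a local block that actually tensorizes.
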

\noindent 
%\pietro{slightly simplified this sentence} 
We remark that Theorem~\ref{thm:evenodd-comparison} %as stated
holds for arbitrary boundary conditions (or pinnings of vertices) of the bipartite graph,
and thus Theorem~\ref{thm:intro:sw} for the lattice $\bbZ^d$ follows immediately from this theorem and the above mentioned results in~\cite{CP}. 
%
%We expect
%this theorem to be useful for the analysis of SW dynamics in other settings, 
%\new{where we could hope to establish the approximate even/odd factorization of entropy (as done in~\cite{CP} for $\bbZ^d$).
%Moreover, we remark that Theorem~\ref{thm:evenodd-comparison} as stated
%holds for arbitrary boundary conditions (or pinnings of vertices) of the bipartite graph,
%and thus Theorem~\ref{thm:intro:sw} follows immediately from this theorem and the results in~\cite{CP}. 
%}

The main technical step in the proof of Theorem~\ref{thm:evenodd-comparison} is to show that
even/odd factorization implies a novel \emph{spin/edge factorization} of entropy (see Defintion \eqref{def:spin-edge} below), which is tailored to the SW dynamics so that it implies $O(\log n)$ mixing fairly directly.
%We also note that this result can be viewed as a comparison inequality for entropy factorization,
%allowing us to deduce entropy decay for SW from that of a much simpler dynamics, namely
%the {\it even/odd\/} dynamics which updates the spins of all vertices on one (randomly chosen)
%side of the bipartition in each step.  
%Our proof of Theorem~\ref{thm:evenodd-comparison} is based on a novel factorization
%of entropy tailored to the SW dynamics, which we call the {\it spin/edge factorization} and is
%spelled out in the next subsection.

\subsection{The spin/edge factorization}
\ignore{
	The SW dynamics is not a block dynamics as all vertices can be updated in a single step.
	Hence it is unclear to how to extend the notion of block factorization to capture the SW dynamics.
	We derive a novel factorization of entropy corresponding to the SW dynamics by
	utilizing the joint probability space on spins and edges
	introduced by Edwards and Sokal~\cite{ES}.}

Our new entropy factorization is based on the joint probability space on spins and edges
introduced by Edwards and Sokal~\cite{ES}, that underlies the SW dynamics.
Let $\Joint =\Omega \times \{0,1\}^\EdgeSet$ 
be the set of joint configurations $(\sigma,A)$ consisting of a spin assignment to the vertices $\sigma \in \Omega$ and a subset of edges $A \subseteq \EdgeSet$,
where recall that $\EdgeSet$ is the set of edges with both endpoints in $V$.	
The Edwards-Sokal distribution on $G$ with parameters $p\in[0,1]$ and $q\in\bbN$,
and free boundary condition, is the probability measure
on $\Joint$ given by 
\begin{equation}\label{jointnu-intro}
\nu(\si,A) := \frac1{Z_\textsc{j}}\,p^{|A|}(1-p)^{|\EdgeSet|-|A|}\IND(\si\sim A),
\end{equation}
where $\si\sim A$ means that 
$A \subseteq M(\si)$ 
(i.e., that every edge in $A$ is monochromatic in $\sigma$)
and $Z_\textsc{j}$ is the corresponding normalizing constant or partition function. 
When $p = 1 - e^{-\b}$, the ``spin marginal'' of $\nu$ is precisely the Potts distribution $\mu$ and $Z = Z_\textsc{j}$; the ``edge marginal'' of $\nu$ corresponds to the well-known random-cluster measure; see~\cite{FK,Grimmett}.  
The SW dynamics alternates between spin configurations and joint spin/edge configurations
in a manner consistent with~\eqref{jointnu-intro}.

We note that a boundary condition on the joint space allows fixing the state of \emph{both} spins
and edges and thus may introduce more complex dependencies. While 
our results in the joint space are stated here only for the free boundary condition, they actually extend to any \emph{spin-only} boundary condition. By a ``spin-only'' boundary condition we mean any boundary condition that fixes the spins of a subset of vertices, and fixes no values for the edges. 
In fact, in $\bbZ^d$, we can handle a slightly more general class of boundary conditions we call \emph{admissible} (see Definition~\ref{def:admissible}) which will be useful for proving~Theorem~\ref{cor:sw-lt:intro} and our results for random-cluster dynamics.

%We can now state o
Our entropy factorization for the SW dynamics is defined as follows. 
\begin{definition}
	\label{def:spin-edge}
	We say that {\em approximate spin/edge factorization} with constant $C$ holds if 
	%there exists a constant $C>0$ such that, 
	for all functions $f: \Joint \mapsto \bbR_+$,
	\begin{align}\label{entfact2o-intro}
	\ent_\nu (f)\leq C \,\left(\nu\left[\ent_\nu (f\tc\si)] +\nu[\ent_\nu (f\tc A)\right]\right).
	\end{align}
\end{definition}
%\pietro{slightly reformulated this sentence} 
Let us explain the terms in~\eqref{entfact2o-intro} in more detail. We write $\nu(\cdot\tc\si)$ for the probability obtained from $\nu$ by conditioning the on whole  spin configuration being equal to a given $\si\in\Omega$ and $\nu(\cdot\tc A)$ for the probability obtained from $\nu$ by conditioning on the whole edge configuration being equal to a given $A \subseteq \EdgeSet$. With this notation, $\ent_\nu (f\tc\si)$ and $\ent_\nu (f\tc A)$ denote 
the entropy of $f$ with respect to the conditional measures $\nu(\cdot\tc\si)$ and $\nu(\cdot\tc A)$, respectively. 
Therefore, taking their expectation with respect to $\nu$ one obtains
\begin{align}\label{condenti}
\nu\left[\ent_\nu (f\tc \si)\right] =  \nu[f \log (f/\nu[f\tc\si])]\,,\quad \nu\left[\ent_\nu (f\tc A)\right] =  \nu[f \log (f/\nu[f\tc A])].
\end{align}

%$\ent_\nu (f\tc\si)$ and $\ent_\nu (f\tc A)$ are functions of $\si$ and $A$, respectively, and $\nu\left[\ent_\nu (f\tc\si)\right]$, $\nu\left[\ent_\nu (f\tc A)\right]$ are the corresponding expectations of the entropy functional with respect to $\nu$. 

The main technical ingredient in proving Theorem~\ref{thm:evenodd-comparison} is the
following ``comparison lemma'' for entropy factorization.
\begin{lemma}\label{lem:main-intro}
	For the Potts model at inverse temperature $\beta$ on any bipartite graph of maximum degree $\Delta$, approximate even/odd factorization 
	with constant $C$
	implies approximate spin/edge factorization with constant $C' = C'(C,\Delta,q,\beta)$.
\end{lemma}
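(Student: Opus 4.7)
The plan is to first lift the approximate even/odd factorization of $\mu$ to $\nu$ via the chain rule, and then analyze the conditional measures $\nu(\cdot\mid\sigma_E)$ and $\nu(\cdot\mid\sigma_O)$ by exploiting the bipartite product structure. Setting $g(\sigma):=\nu[f\mid\sigma]$, the chain rule yields $\ent_\nu(f)=\nu[\ent_\nu(f\mid\sigma)]+\ent_\mu(g)$. I would then apply the hypothesized even/odd factorization to $g$, use the identity $\mu[\ent_\mu(g\mid\sigma_E)]=\nu[\ent_\nu(f\mid\sigma_E)]-\nu[\ent_\nu(f\mid\sigma)]$ and its $\sigma_O$ analogue, and drop the nonpositive term $(1-2C)\nu[\ent_\nu(f\mid\sigma)]$ (valid since $C\ge 1$) to obtain
\[
\ent_\nu(f)\ \le\ C\bigl(\nu[\ent_\nu(f\mid\sigma_E)]+\nu[\ent_\nu(f\mid\sigma_O)]\bigr).
\]

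The task then reduces to showing $\nu[\ent_\nu(f\mid\sigma_E)]\le K\bigl(\nu[\ent_\nu(f\mid\sigma)]+\nu[\ent_\nu(f\mid A)]\bigr)$, and symmetrically for $\sigma_O$, for some $K=K(\beta,q,\Delta)$. Here I would exploit that because $G$ is bipartite every edge has exactly one odd endpoint, so $A=\bigsqcup_{v\in O}A_v$ with $A_v:=A\cap \EdgeSet(v)$. Under $\nu(\cdot\mid\sigma_E)$ the pairs $\{(\sigma_v,A_v)\}_{v\in O}$ are therefore jointly independent, each distributed according to a law $\pi_v$ that depends only on the neighboring spins of $\sigma_E$. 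The distribution $\pi_v$ is supported on a finite state space of size at most $q\cdot 2^\Delta$, with all positive probabilities bounded uniformly in $(\beta,q,\Delta)$. On such a distribution the alternating two-block Gibbs sampler between the $\sigma_v$- and $A_v$-coordinates is ergodic with a modified log-Sobolev constant depending only on $(\beta,q,\Delta)$, which yields a single-vertex two-block factorization
\[
\ent_{\pi_v}(h)\ \le\ K\bigl(\pi_v[\ent(h\mid\sigma_v)]+\pi_v[\ent(h\mid A_v)]\bigr).
\]

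The concluding step is to tensorize this single-vertex inequality to the product over $O$. The ``conditional SW'' chain on $(\sigma_O,A)$ with $\sigma_E$ frozen decomposes as the product of the $|O|$ independent single-vertex two-block chains updated in parallel, so its entropy contraction factor is exactly the minimum of the single-vertex rates. This delivers
\[
\ent_\nu(f\mid\sigma_E)\ \le\ K\bigl(\nu[\ent_\nu(f\mid\sigma)\mid\sigma_E]+\nu[\ent_\nu(f\mid A,\sigma_E)\mid\sigma_E]\bigr),
\]
and taking $\nu$-expectations, together with the monotonicity $\nu[\ent_\nu(f\mid A,\sigma_E)]\le\nu[\ent_\nu(f\mid A)]$ of averaged entropy under additional conditioning, yields the target bound on $\nu[\ent_\nu(f\mid\sigma_E)]$. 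Combining with the first step gives approximate spin/edge factorization with $C'=2CK$. The hardest piece will be this tensorization: two-block modified log-Sobolev inequalities do not tensorize via the standard single-block heat-bath argument (which would introduce a factor proportional to the number of vertices), so the tensorization must be carried out directly at the level of the entropy-production identity for the simultaneous-update product chain, making essential use of the alternating block structure specific to SW dynamics.
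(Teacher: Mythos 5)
Your proposal follows essentially the same three-part decomposition as the paper's proof. Your Step~1 (transfer even/odd factorization from $\mu$ to $\nu$ by the chain rule, then drop the $(1-2C)\nu[\ent_\nu(f\mid\sigma)]$ term) is exactly Lemma~\ref{lem:conc}; your Step~2 (single-vertex two-block factorization for the pair $(\sigma_v,A_v)$ under $\nu(\cdot\mid\sigma_E)$, obtained from a finite-state ergodicity bound) is Lemma~\ref{lem:tensorx}; your Step~3 (tensorize over the product structure of $\nu(\cdot\mid\sigma_E)$) plus the monotonicity $\nu[\ent_\nu(f\mid A,\sigma_E)]\le\nu[\ent_\nu(f\mid A)]$ are Lemma~\ref{lem:tensor} (via Lemma~\ref{lemma:iterative:bounds}) and Lemma~\ref{lem:conv}, respectively. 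Your sketch also correctly isolates the only genuinely delicate step: the tensorization is not the standard single-block argument. The paper carries it out not by an abstract ``product-chain entropy-contraction equals minimum of per-vertex rates'' statement, but by a martingale telescoping of $\ent_\nu(f\mid\sigma_O)$ over an arbitrary ordering of one color class, a commutation relation coming from the product structure, and the variational principle $\ent_\pi(f)=\sup\{\pi[f\varphi]:\pi[e^\varphi]\le1\}$ to compare conditional entropies term by term; this is the concrete form of what you call the ``entropy-production identity'' argument, and is what you would need to write out to close the gap.
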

%\noindent 
%We observe that the constant in the approximate spin/edge factorization
%will depend on the constant in the approximate even/odd factorization, and also on
%$\Delta$ and $\beta$, but is independent of $n$.%\antonio{check}

%\pietro{slightly reformulated sentence}
To complete the proof of Theorem~\ref{thm:evenodd-comparison}, we show that the
spin/edge factorization in~\eqref{entfact2o-intro} implies decay of entropy for the
SW dynamics: namely,
there exists a constant $\delta > 0$ such that, for all functions $f:\Omega \mapsto \bbR_+$, we have
\begin{align}
\label{relentsw-pottso}
\ent_\mu( \PSW f) \leq (1-\d) \ent_\mu(f),
\end{align}
where $\PSW$ denotes the transition matrix of the SW dynamics.
As we recall in Section \ref{subseq:prel}, standard arguments then 
imply a modified log-Sobolev inequality, and a bound of $O(\log{n})$ on the mixing time of the SW dynamics.

\begin{lemma}\label{lem:main-intro-se-to-mixing}
	For the Potts model on \emph{any} $n$-vertex graph, approximate spin/edge factorization with constant $C=O(1)$ implies that \eqref{relentsw-pottso} holds with $\d=1/C$ and hence $\tmix(SW) = O(\log n)$. 
\end{lemma}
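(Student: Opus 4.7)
The plan is to lift functions on the spin space $\O$ to the joint space $\Joint$, recognize the SW transition operator as a composition of two conditional expectations under $\nu$, and then read off the entropy contraction directly from the spin/edge factorization hypothesis.

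For any $f:\O\to\bbR_+$, introduce its lift $\tilde f:\Joint\to\bbR_+$ by $\tilde f(\si,A):=f(\si)$. Since $\nu$ has spin marginal $\mu$, one has $\ent_\nu(\tilde f)=\ent_\mu(f)$. Let $T_\si$ and $T_A$ denote conditional expectation under $\nu$ given the spin configuration $\si$ and the edge configuration $A$, respectively. The first step is to observe that the two stages of the SW dynamics---bond percolation on the monochromatic edges, followed by uniform recoloring of each connected component---are precisely a draw from $\nu(\cdot\tc\si)$ followed by a draw from $\nu(\cdot\tc A)$, so that
\[
\PSW f \;=\; T_\si\bigl(T_A \tilde f\bigr),
\]
after identifying each function of $\si$ with its (constant in $A$) extension to $\Joint$.

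Next I would apply the spin/edge factorization. Because $\tilde f$ depends only on $\si$, the conditional entropy $\ent_\nu(\tilde f\tc\si)$ vanishes, and \eqref{entfact2o-intro} collapses to $\ent_\nu(\tilde f)\le C\,\nu[\ent_\nu(\tilde f\tc A)]$. Combining this with the standard entropy chain rule
\[
\nu[\ent_\nu(\tilde f\tc A)] \;=\; \ent_\nu(\tilde f)-\ent_\nu(T_A\tilde f)
\]
and rearranging yields $\ent_\nu(T_A\tilde f)\le (1-1/C)\,\ent_\nu(\tilde f)$. Since conditional expectation can only decrease entropy (Jensen for the convex function $t\log t$), applying $T_\si$ preserves the bound, and therefore
\[
\ent_\mu(\PSW f)\;=\;\ent_\nu(T_\si T_A \tilde f)\;\le\;\ent_\nu(T_A\tilde f)\;\le\;(1-\delta)\,\ent_\mu(f),
\]
with $\delta=1/C$, which is exactly \eqref{relentsw-pottso}.

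To turn this entropy contraction into a mixing bound I would iterate: $\ent_\mu(\PSW^t f)\le (1-\delta)^t \ent_\mu(f)$ for all $t\ge 0$. Applied to the density $f_0=\setone_{\si_0}/\mu(\si_0)$ of a worst-case point mass and using reversibility of $\PSW$ with respect to $\mu$, this yields $\ent_\mu(f_t)\le (1-\delta)^t\log(1/\mu(\si_0))$, where $f_t$ is the density of the time-$t$ law $\mu_t$. Since $\ent_\mu(f_t)$ is precisely the relative entropy of $\mu_t$ with respect to $\mu$, Pinsker's inequality together with $\log(1/\mu_*)=O(n)$ and $\delta=\Theta(1)$ gives $\tmix(SW)=O(\delta^{-1}\log\log(1/\mu_*))=O(\log n)$. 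The only part of this plan requiring any real insight is the identification $\PSW=T_\si T_A$; all other steps are routine manipulations (chain rule, Jensen, Pinsker), which is exactly why Definition~\ref{def:spin-edge} was tailored to this two-block form in the first place.
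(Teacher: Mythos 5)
Your proof is correct and is essentially the paper's argument, just reordered: you read $\PSW f = T_\si T_A \tilde f$, derive $\ent_\nu(T_A\tilde f)\le(1-1/C)\ent_\nu(\tilde f)$ from the chain rule \eqref{eq:total:entEdges} plus the collapse of \eqref{entfact2o-intro} for lifted functions, and finish with Jensen for $T_\si$, whereas the paper applies Jensen first and then the chain rule and factorization. One small inaccuracy in the final paragraph: for an \emph{arbitrary} $n$-vertex graph one only has $\log(1/\mu_*)=O(n^2)$ rather than $O(n)$ (dense graphs), but since all that is needed is $\log\log(1/\mu_*)=O(\log n)$ this does not affect the conclusion.
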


\subsection{Further results}
Our new entropy factorization framework
leads to several additional algorithmic results on $\bbZ^d$ that hold under the condition of SSM,
which we briefly summarize here.
First, we prove optimal $O(\log n)$ mixing time for {\it alternating systematic scan
	dynamics}, a natural non-local dynamics in which even and odd sides of the bipartition
are updated on alternate steps.  Systematic scan dynamics, in which updates are
performed in a deterministic rather than random sequence, are widely used in 
practice but are non-reversible and
typically much harder to analyze.  Second, we are able to show that various versions
of the SW dynamics on the {\it joint spin/edge space\/} mix in $O(\log n)$ time, as
does the SW dynamics for the {\it random-cluster model\/}. % (which is the projection of the joint space onto edges, and is another important model in statistical physics).
Finally, we show that a natural local Glauber dynamics in the joint space has optimal mixing time $\Theta(n\log n)$.
Formal statements of all these results can be found in the main body of the paper.

\medskip\noindent
{\bf Organization of the paper:} 
	In Section~\ref{sec:backgnd} we gather definitions of
various standard concepts used throughout the paper.  
Section~\ref{sec:entropysw} proves Lemma~\ref{lem:main-intro-se-to-mixing}
showing that the spin/edge entropy factorization implies $O(\log{n})$ mixing
for the SW dynamics.
We prove Lemma~\ref{lem:main-intro} relating 
even/odd factorization to spin/edge factorization in Section~\ref{sec:factorization}, and then
combine Lemmas~\ref{lem:main-intro} and~\ref{lem:main-intro-se-to-mixing}
to establish our main technical tool (Theorem~\ref{thm:evenodd-comparison}) and 
our main algorithmic result (Theorem~\ref{thm:intro:sw}).
Our lower bound on the mixing time is proved in Section~\ref{sec:lower-bound}.
We discuss further applications in the remaining sections.  
Section~\ref{sec:entropyjoint} proves
entropy decay for non-local and local dynamics in the joint space, and  
Section~\ref{sec:altscan} discusses the alternating scan dynamics.
Finally, we address the random-cluster dynamics in
Section~\ref{sec:rc}, concluding with a proof of Theorem~\ref{cor:sw-lt:intro}.

\section{Background}\label{sec:backgnd}
In this section, we formally define the spatial mixing property to be used throughout the paper. We also recall some known relations and prove some preliminary facts concerning entropy and mixing times.

%\medskip\noindent
%\textbf{Strong spatial mixing (SSM).} \ 

\subsection{Strong spatial mixing (SSM)}

We assume $V \subset \bbZ^d$ is a $d$-dimensional cube of $\bbZ^d$. That is,
$
V=\{0,1,\dots,\ell\}^d
$
where $\ell$ is a positive integer.
% and $\min_i\ell_i\geq \ell_0$ for some fixed $\ell_0$.
We use $\partial V \subseteq V$ to denote the {\it internal boundary\/} of $V$; i.e., the set of vertices in $V$ adjacent to at least one vertex in $\bbZ^d \setminus V$.
A {\it boundary condition\/} $\psi$ for~$V$ is an
assignment of spins to some (or all) vertices in~$\partial V$; i.e., $\psi: U^{\psi} \rightarrow [q]$ with $U^\psi \subset \partial V$. 
The boundary condition where $U^\psi = \emptyset$ is called the \textit{free boundary condition}.
Given a boundary condition~$\psi$, each configuration $\sigma \in \Omega$
that agrees with $\psi$ on $U^{\psi} $ is assigned  probability
\[\mu^{\psi} (\sigma) = \frac{1}{Z^\psi} \cdot {e}^{-\beta |D(\sigma)|},\]
where $Z^\psi$ is the corresponding normalizing constant 
and $D(\s) := \{\{v, w\} \in \EdgeSet : \s_v \neq \s_w\}$.  We define $\mu^{\psi} (\sigma)=0$
for $\sigma\in\Omega$ that does not agree with~$\psi$.

Let $\mathcal{C}(V,a,b)$ be the property that, 
for all $B \subset V$, all $u \in \partial V$ and any pair of boundary conditions $\psi$, $\psi_u$ on $\partial V$ that differ only in the spin of the vertex $u$, we have
\begin{equation}
\label{prelim:eq:ssm}
{\|\mu_B^\psi \,-\, \mu_B^{\psi_u}\|}_{\textsc{tv}} \,\,\le\,\,  b\, \exp(-a \cdot \dist(u, B)),
\end{equation}	
where 
$\mu_B^\psi$ and $\mu_B^{\psi_u}$ are the probability measures induced in $B$ 
by the Potts distribution with boundary conditions
$\psi$ and ${\psi_{u}}$, respectively,
$\|\cdot\|_{\textsc{tv}}$ denotes total variation distance
and $\dist(u, B) = \min_{v \in B} {{\|u-v\|}_1}$.		
\begin{definition}	
	\label{def:prelim:ssm}
	We say that {\em strong spatial mixing (SSM)} holds if 
	there exist $a,b > 0$ such that
	$\mathcal{C}(V,a,b)$ holds for every
	cube $V \subset \bbZ^d$.		
\end{definition}

We note that 
the definition of SSM varies in the literature,
but we work here with one of the weakest (easiest to satisfy) versions.
In $\bbZ^2$, this form of SSM has been established for all $q \ge 2$ and $\beta < \beta_c(q)$, where $\beta_c(q) = \ln(1+\sqrt{q})$ is the uniqueness threshold~\cite{BDC,KA1,MOS}.  Finally, we stress that
the SSM property is determined only by the values of the parameters~$q$ and $p=1-e^{-\beta}$,
and not by any particular boundary condition.

\begin{remark}
	\label{rmk:regions}
	For definiteness, we have stated all of our results for $n$-vertex $d$-dimensional cubes but they extend to more general regions of $\bbZ^d$.
	In particular, we can consider regions which are the union 
	of disjoint translates of a given large enough cube.
	The variant of the SSM condition that requires   
	$\mathcal{C}(U,a,b)$ to hold for every such region $U$
	is equivalent to the one in Definition~\ref{def:prelim:ssm} (see~\cite[Theorem 2.6]{MOI}).
	As  noted in \cite{MOI}, a version of SSM which requires $\mathcal{C}(V,a,b)$ to hold for arbitrarily shaped regions $V$
	does not hold all the way to the uniqueness threshold.
\end{remark}

\subsection{Mixing time, entropy,  and log-Sobolev inequalities}\label{subseq:prel}

%RESOLVE DEPENDENCIES:  1) Refer to this section in Remark in introd. 2) Refer to the lemma belo when applying  Miclo 3) Refer to variational principle 
%%%%%%%%%%%%%%%%%%%%%%%%%

Let $P$ be the transition matrix of an ergodic Markov chain with finite state space $\Gamma$ and stationary distribution $\pi$.
Let $P^t(X_0,\cdot)$ denote the distribution of the chain after $t$ steps 
starting from the initial state $X_0 \in \Gamma$. The {\it mixing time} $\tmix(P)$ of the chain is defined as
\[
\tmix(P) = \max\limits_{X_0 \in \Gamma}\min \left\{ t \ge 0 : {\|{P}^t(X_0,\cdot)-\pi\|}_{\textsc{tv}} \le 1/4 \right\}.
\]
%where $\|\cdot\|_{\textsc{\tiny TV}}$ denotes total variation distance. 
%The \textit{mixing time} of $M$ is defined as $\tmix(M) = \tmix(M,1/4)$. 
To prove upper bounds on the mixing time, in this paper we mostly rely on functional inequalities related to entropy.   

For a function $f:\Gamma \mapsto \bbR$, 
let $\pi[f] = \sum_{\s \in \Omega}  \pi(\s) f(\s)$ 
and $\var_\pi(f) = \pi[f^2] - \pi[f]^2$
denote its mean and variance with respect to $\pi$.
Likewise, for $f$ positive, the {\em entropy} of $f$ with respect to $\pi$ is defined as
\begin{align}\label{eq:def-ent}
\ent_\pi( f) = \pi\left[f \cdot \log \left(\frac{f}{\pi[f]}\right)\right] = \pi[f \cdot \log f] - \pi[f] \cdot \log \pi[f].
\end{align}
%When $f$ is normalized so that $\pi[f]=1$, then $\ent_\pi(f)=\pi[f \cdot \log f]$.

We often consider these functionals and their conditional versions with respect to the Potts measure $\mu$ or the joint measure $\nu$ (as defined in~\eqref{eq:Gibbs} and~\eqref{jointnu-intro} respectively). 
%Sometimes, we consider the conditional expectation and the conditional entropy of functions with respect to $\nu$. 
In particular, if the function $f$ is such that $f:\Joint \mapsto \bbR_+$,
for fixed $\si\in\Omega$ and $A \subseteq \EdgeSet$, we write
$
\nu[f \tc \si] = \sum_{A \subseteq \EdgeSet} \nu(A \tc \si) f(\sigma, A),
$
$
\nu[f\tc A] = \sum_{\sigma \in \Omega} \nu(\si \tc A) f(\sigma, A)
$
and
\begin{equation*}
\ent_\nu (f\tc\si) = \nu\left[f \cdot \log\left(\frac{f}{\nu[f\tc\si]}\right)\,\middle|\, \si\right],
\quad\quad
\ent_\nu (f\tc A) = \nu\left[f \cdot \log\left(\frac{f}{\nu[f\tc A]}\right) \,\middle|\, A\right].
\end{equation*}
Note that $\ent_\nu (f\tc\si)$ and $\ent_\nu (f\tc A)$ are functions of $\si\in\Omega$ and $A \subseteq \EdgeSet$, respectively, and 
with slight abuse 
of notation, we write $\nu\left[\ent_\nu (f\tc\si)\right]$ and $\nu\left[\ent_\nu (f\tc A)\right]$ for the corresponding expectations with respect to $\nu$; see \eqref{condenti}. The following identities hold:
\begin{align}
\Ent_\nu(f) &= \Ent_\nu(\nu[f\tc A]) + \nu[\Ent_\nu(f\tc A)]  \label{eq:total:entEdges};\\
\Ent_\nu(f) &= \Ent_\nu(\nu[f\tc\sigma]) + \nu[\Ent_\nu(f\tc \sigma)]. \label{eq:total:entSpins}
\end{align}
Indeed, both statements follow from the general decomposition 
\begin{align}
\Ent_\pi(f) = \Ent_\pi(\pi[f\tc\cF]) + \pi[\Ent_\pi(f\tc\cF)]  
\label{eq:total:ent-dec},
\end{align}
valid for any distribution $\pi$, and any sub $\si$-algebra $\cF$, which follows by adding and subtracting the term $\pi(f\log\pi[f\tc\cF])$ in \eqref{eq:def-ent}.
Another basic property of entropy that we shall use is the variational principle
\begin{equation}
\label{varprin}
\ent_\pi(f)=\sup \left\{\pi[f\varphi]\,,\; \pi[e^\varphi]\leq 1\right\}\,,
\end{equation}
valid for any distribution $\pi$, and any $f\geq 0$, where the supremum ranges over all functions $\varphi:\Gamma\mapsto \bbR$ such that $\pi[e^\varphi]\leq 1$, see e.g.\ Proposition 2.2 in \cite{Ledoux}. 

When $f\geq 0$ is such that  $\pi[f] =1$, then
$\ent_\pi (f) = H(f \pi\,|\, \pi)$ corresponds to the {\em relative entropy}, or Kullback-Leibler divergence, between the distribution $f \pi$ and $\pi$.
%For any probability measure $\r=f\pi$ with density $f > 0$ with respect to $\pi$ 
%such that $\pi[f] =1$, we write
%\begin{align}\label{relent}
%H(\r\tc\pi)=\pi[f\log f]=\ent_\pi(f),
%\end{align}
%for the relative entropy of $\r$ with respect to $\pi$. 
\begin{definition}\label{def:entdec}
	A Markov chain with transition matrix $P$ and stationary distribution $\pi$ is said to satisfy the
	{\em (discrete time) relative entropy decay with rate $\d > 0$} if for all distributions $\zeta$,
	\begin{align}\label{relent2}
	H(\zeta P\tc\pi)\leq (1-\d) H(\zeta \tc\pi).
	\end{align}
\end{definition}
We recall a well known consequence of entropy decay for the mixing time. 
For completeness, we include a proof.
\begin{lemma}\label{lem:mix}
	If a Markov chain with transition matrix $P$ and stationary distribution $\pi$ satisfies relative entropy decay with rate $\d>0$, then its mixing time $\tmix(P)$ satisfies
	\begin{align}\label{reltmix}
	\tmix(P) \leq 1+ \d^{-1}[\log(8)+\log\log(1/\pi_*)]\,, 
	\end{align}
	where $\pi_* = \min_\si \pi(\si)$. 
\end{lemma}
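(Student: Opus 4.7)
The plan is to iterate the entropy decay hypothesis applied to the worst-case point-mass initial distribution, and then convert the resulting relative entropy bound to a total variation bound via Pinsker's inequality. Nothing here is delicate; the whole argument is three lines once the three standard ingredients are in place.

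First, I would observe that the decay inequality iterates trivially. Setting $\zeta_s = \zeta P^s$ and applying \eqref{relent2} to $\zeta_s$ in place of $\zeta$ gives $H(\zeta_{s+1}\tc\pi)\leq(1-\d)H(\zeta_s\tc\pi)$, so by induction
\begin{equation*}
H(\zeta P^t\tc\pi)\leq (1-\d)^t\,H(\zeta\tc\pi)\qquad\text{for all }t\geq 0.
\end{equation*}

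Next, I would specialize to the worst-case initial condition $\zeta=\delta_{X_0}$, a point mass at $X_0\in\Gamma$. A direct computation from \eqref{eq:def-ent} (taking $f = \IND_{X_0}/\pi(X_0)$) gives $H(\delta_{X_0}\tc\pi)=\log(1/\pi(X_0))\leq \log(1/\pi_*)$. Combined with the previous display and the elementary estimate $(1-\d)^t\leq e^{-\d t}$, this yields
\begin{equation*}
H(P^t(X_0,\cdot)\tc\pi)\leq e^{-\d t}\log(1/\pi_*).
\end{equation*}

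Finally, I would invoke Pinsker's inequality $\|\eta-\pi\|_{\textsc{tv}}^2\leq \tfrac12 H(\eta\tc\pi)$ to get $\|P^t(X_0,\cdot)-\pi\|_{\textsc{tv}}^2\leq \tfrac12\,e^{-\d t}\log(1/\pi_*)$. To force this below $1/16$, and hence the total variation distance below $1/4$ uniformly in $X_0$, it suffices to require $e^{-\d t}\log(1/\pi_*)\leq 1/8$, i.e.\ $t\geq \d^{-1}\bigl[\log 8+\log\log(1/\pi_*)\bigr]$. The additive $+1$ in \eqref{reltmix} simply absorbs the rounding up to the next integer. There is no real obstacle; the only point worth checking is that the point mass is indeed the worst case for relative entropy, which is clear from the identity $H(\delta_{X_0}\tc\pi)=\log(1/\pi(X_0))$ and convexity of $H(\cdot\tc\pi)$.
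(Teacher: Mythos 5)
Your proposal is correct and follows essentially the same route as the paper's proof: iterate the entropy decay from a point-mass start, bound the initial relative entropy by $\log(1/\pi_*)$, and finish with Pinsker's inequality. The only cosmetic difference is your closing remark about the point mass being the worst case, which is not needed since the mixing time is by definition a maximum over point-mass initial states.
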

\begin{proof}[Proof of Lemma~\ref{lem:mix}]
	Pinsker's inequality says that 
	\begin{align}\label{pinsker}
	\|\d_\si P^n - \pi\|_{TV}^2\leq \frac12H(\d_\si P^n\tc\pi),
	\end{align}
	where $\d_\si (\tau) = \IND(\t = \s)$ is the Dirac mass at $\si$. 
	Iterating \eqref{relent2},
	\begin{align}\label{pinsker2}
	\|\d_\si P^n - \pi\|_{TV}^2\leq \frac12(1-\d)^nH(\d_\si\tc\pi).
	\end{align}
	Since $H(\d_\si\tc\pi)=-\log\pi(\si)$ and $(1-\d)^n\leq e^{-\d n}$ we obtain
	\begin{align}\label{pinsker3}
	\|\d_\si P^n - \pi\|_{TV}\leq \frac14,
	\end{align}
	as soon as $n$ is an integer such that $n\geq \d^{-1}\log[8\log(1/\pi_*)]$.
\end{proof}

\begin{remark}
	\label{rmk:entropy-mixing}
	%Note that we have not assumed reversibility for $P$ with respect to $\pi$. 
	If $\zeta$ has density $f$ with respect to $\pi$ (i.e., $\zeta = f\pi$), then $\zeta P$ has density $P^*f$ with respect to $\pi$, where $P^*$ is the {\em adjoint} or {\em time-reversal} matrix 
	$
	P^*(\si,\si') = \frac{\pi(\si')}{\pi(\si)}P(\si',\si).
	$
	Thus, \eqref{relent2} is equivalent to
	\begin{align}\label{relent3}
	\ent_\pi( P^* f) \leq (1-\d) \ent_\pi(f),
	\end{align}
	for all $f\geq 0$ such that $\pi[f]=1$. By homogeneity, this is equivalent to \eqref{relent3} for all $f\geq 0$. 
	When $P$ is reversible, that is when $P = P^*$,~\eqref{relent2} is equivalent to $\ent_\pi( P f) \leq (1-\d) \ent_\pi(f)$ for all $f\geq 0$.
\end{remark}
The inequality \eqref{relent3} can be considered as a discrete time analogue of the so-called modified log-Sobolev inequality characterizing the relative entropy decay for continuous time Markov chains; see, e.g.\ \cite{BT}.  Below we discuss some basic relations 
among \eqref{relent3}, the standard log-Sobolev inequality and the modified log-Sobolev inequality. 

Consider a transition matrix $P$ with stationary distribution $\pi$. The {\em Dirichlet form} associated to the pair $(P,\pi)$ is defined as 
\begin{align}\label{direlenti}
\cD_P(f,g)=%\scalar{(1-P^*)f}{g}=
\scalar{f}{(1-P)g},
\end{align}
where $f,g$ are real functions on $\Gamma$, and  $\scalar{f}{g}=\pi[fg]$ denotes the scalar product in $L^2(\pi)$. 
%and $P^*$ is the adjoint of $P$. 
Since $f$ is real we also have
\begin{align}\label{direlenti2}
\cD_P(f,f)=\scalar{(1-Q)f}{f}=\frac12\sum_{x,y}\pi(x)Q(x,y)(f(x)-f(y))^2,
\end{align}
where $Q=\tfrac12(P+P^*)$.  Moreover, if $P=P^*$ one has 
\begin{align}\label{direlenti21}
\cD_P(f,g)=\frac12\sum_{x,y}\pi(x)P(x,y)(f(x)-f(y))(g(x)-g(y)),
\end{align}
for all $f,g$.

\begin{definition}\label{def:LSI}
	The pair $(P,\pi)$ is said to satisfy the (standard) log-Sobolev inequality (LSI) with constant $\a$ if for all $f\geq 0$:
	\begin{align}\label{direlenti3}
	\cD_P(\sqrt f,\sqrt f)\geq \a\, \ent_\pi f.
	\end{align}
	It is said to satisfy the modified log-Sobolev inequality (MLSI) with constant $\d$ if for all $f\geq 0$:
	\begin{align}\label{direlenti4}
	\cD_P(f,\log f)\geq \d\, \ent_\pi f.
	\end{align}
\end{definition}
It is well known that the Log-Sobolev inequality is equivalent to the so-called hypercontractivity (see \cite[Theorem 3.5]{DSC}), 
while the modified Log-Sobolev inequality \eqref{direlenti4} is equivalent to exponential decay of the relative entropy with rate $\d$ for the continuous time kernel  $K_t=e^{(P-1)t}$  (see \cite[Theorem 3.6]{DSC}). Note that we are not assuming reversibility. 
To see the relation between the MLSI and the entropy decay in continuous time, note that if $K_t=e^{(P-1)t}$ and $f$ has mean $\pi[f]=1$ then using $K_t^*=e^{(P^*-1)t}$ one checks that the time derivative of the relative entropy satisfies
\begin{align}\label{relenti2}
\frac{d}{dt}\,H(\zeta K_t\tc\pi)&= \frac{d}{dt}\ent (K^*_tf)
= -\cD_P(K^*_tf,\log K_t^* f),
\end{align}
where $\zeta=f\cdot\pi$. 
Therefore \eqref{direlenti4} implies, for all $t\geq 0$:
$$
H(\zeta K_t\tc\pi)\leq H(\zeta\tc\pi)e^{-\d t}.
$$
Next, we observe that the bound \eqref{relent3} is stronger than the MLSI in \eqref{direlenti4}.
\begin{lemma}\label{lem:dtct}
	If  the entropy decay holds with rate $\d$ in discrete time then it holds with the same rate in continuous time.  That is, \eqref{relent3} implies the MLSI with constant $\d$.
\end{lemma}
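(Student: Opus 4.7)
The plan is to derive the continuous-time exponential decay $\ent_\pi(K_t^* f) \leq e^{-\delta t}\ent_\pi(f)$ from the discrete-time inequality \eqref{relent3}, and then recover the MLSI \eqref{direlenti4} by differentiating at $t=0$ via the identity \eqref{relenti2}.

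First, by homogeneity of both sides of \eqref{direlenti4} in $f$ (note that $\ent_\pi$ is $1$-homogeneous and $\cD_P(cf,\log(cf)) = c\cD_P(f,\log f)$ since $P\cdot 1 = 1$), I may assume $\pi[f] = 1$ and treat $\zeta = f\pi$ as a probability density. Expanding the continuous-time semigroup as a Poisson sum,
\begin{equation*}
K_t^* = e^{-t(1-P^*)} = e^{-t}\sum_{n \geq 0} \frac{t^n}{n!}(P^*)^n,
\end{equation*}
I would then invoke convexity of the entropy functional $g \mapsto \ent_\pi(g)$ (which follows, for instance, from the variational principle \eqref{varprin}) together with the fact that $\{e^{-t}t^n/n!\}_{n\geq 0}$ is a probability distribution on $\bbN$. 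Jensen's inequality yields
\begin{equation*}
\ent_\pi(K_t^* f) \leq e^{-t}\sum_{n\geq 0}\frac{t^n}{n!}\ent_\pi((P^*)^n f).
\end{equation*}

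Next, iterating \eqref{relent3} gives $\ent_\pi((P^*)^n f) \leq (1-\delta)^n\ent_\pi(f)$ for every $n\geq 0$. Substituting into the previous bound and summing the exponential series produces
\begin{equation*}
\ent_\pi(K_t^* f) \leq e^{-t}\sum_{n\geq 0}\frac{t^n(1-\delta)^n}{n!}\ent_\pi(f) = e^{-\delta t}\ent_\pi(f),
\end{equation*}
which is the desired continuous-time entropy decay at rate $\delta$.

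Finally, subtracting $\ent_\pi(f)$ from both sides, dividing by $t > 0$, and letting $t \to 0^+$ yields
\begin{equation*}
\frac{d}{dt}\ent_\pi(K_t^* f)\Big|_{t=0^+} \leq -\delta\,\ent_\pi(f).
\end{equation*}
Recalling \eqref{relenti2} (applied at $t=0$, where $K_0^* f = f$), the left-hand side equals $-\cD_P(f,\log f)$, so $\cD_P(f,\log f) \geq \delta\,\ent_\pi(f)$, which is \eqref{direlenti4}. The only step requiring a small amount of care is the convexity/Jensen estimate at the start: entropy is not obviously convex from its expression $\pi[f\log f] - \pi[f]\log\pi[f]$, but convexity is immediate from the variational characterization \eqref{varprin} already recorded in the paper, so no real obstacle arises.
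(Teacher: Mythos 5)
Your proof is correct, but it takes a genuinely different route from the paper's. The paper's argument is shorter and stays entirely at the level of a single discrete step: it applies the variational principle \eqref{varprin} with $\varphi = \log f$ (noting $\pi[e^{\log f}] = \pi[f] = 1$) to obtain $\pi[(P^*f)\log f] \leq \ent_\pi(P^*f)$, whence $\cD_P(f,\log f) = \pi[((1-P^*)f)\log f] \geq \ent_\pi(f) - \ent_\pi(P^*f) \geq \delta\,\ent_\pi(f)$, with no reference to the semigroup at all. Your route instead goes through the semigroup: you expand $K_t^* = e^{-t}\sum_n \frac{t^n}{n!}(P^*)^n$ as a Poisson mixture, invoke convexity of $g\mapsto\ent_\pi(g)$ (correctly justified via \eqref{varprin}, circumventing the fact that the formula $\pi[g\log g]-\pi[g]\log\pi[g]$ is not manifestly convex) to push the entropy through the mixture, iterate \eqref{relent3} to get $\ent_\pi((P^*)^n f)\leq(1-\delta)^n\ent_\pi(f)$, resum the exponential series to obtain $\ent_\pi(K_t^*f)\leq e^{-\delta t}\ent_\pi(f)$, and finally extract the MLSI by differentiating at $t=0$ using \eqref{relenti2}. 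Both arguments ultimately appeal to \eqref{varprin}, but in different places and for different purposes. The paper's proof is more economical; yours is longer but has the virtue of establishing the continuous-time exponential decay $\ent_\pi(K_t^*f)\leq e^{-\delta t}\ent_\pi(f)$ as an explicit intermediate statement, which the paper only records afterwards as a consequence of \eqref{direlenti4} by integrating the differential inequality. Both are valid.
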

\begin{proof}
	Suppose that 
	\begin{align*}%\label{relentii3}
	\ent_\pi (P^* f)\leq (1-\d) \ent_\pi f.
	\end{align*}
	From the variational principle \eqref{varprin} it follows that for any $f\geq 0$ with $\pi[f]=1$:
	\begin{align*}%\label{orelentii31}
	\pi[(P^* f)\log f]\leq \ent P^* f.
	\end{align*}
	Therefore,
	\begin{align*}%\label{relentii32}
	\cD_P(f,\log f)=\pi[((1-P^*) f)\log f]\geq \ent f -\ent P^* f\geq \d\,\ent f.
	\end{align*}
\end{proof}
It is well known that the standard LSI with constant $\a$ implies entropy decay in continuous time with rate $\d=2\a$, since $\cD_P( f,\log f)\geq2\cD_P(\sqrt f,\sqrt f)$ for all $f\geq 0$, and this can be improved to $\d=4\a$ in the reversible case; see \cite[Lemma 2.7]{DSC}. Here we recall a result of Miclo \cite{Miclo} showing in what sense the 
LSI implies the discrete time entropy decay. 
\begin{lemma}\label{lem:miclo}
	If the pair $(P^*P,\pi)$ satisfies the standard LSI with constant $\a$, then  the discrete time entropy decay holds for $(P,\pi)$ with constant $\d=\a$.
	In particular, if $P$ is reversible and $(P,\pi)$ satisfies the  LSI with constant $\a$, then for all $f\geq 0$:
	\begin{align}\label{arelentii3}
	\ent_\pi P f\leq (1-\a) \ent_\pi f.
	\end{align}
\end{lemma}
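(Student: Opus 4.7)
The strategy is to combine the Donsker-Varadhan variational formula for entropy with a quantitative refinement of Jensen's inequality supplied by the LSI hypothesis. Fix a density $f \geq 0$ with $\pi[f]=1$; by the equivalences noted in Remark~\ref{rmk:entropy-mixing}, the desired entropy decay is equivalent to the functional inequality $\ent_\pi(P^*f) \leq (1-\alpha)\ent_\pi(f)$.

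First, use the adjoint identity $\pi[(P^*g)h]=\pi[g \cdot Ph]$ with $g=f$ and $h=\log P^*f$ to rewrite $\ent_\pi(P^*f) = \pi[(P^*f)\log P^*f] = \pi[f\cdot P\log P^*f]$. Applying the Donsker-Varadhan inequality $\pi[f\varphi] \leq \ent_\pi(f) + \log\pi[e^\varphi]$ (valid for any $\varphi$ when $\pi[f]=1$) with $\varphi=P\log P^*f$ yields
\[
\ent_\pi(P^*f) \leq \ent_\pi(f) + \log\pi\bigl[e^{P\log P^*f}\bigr],
\]
so the task reduces to bounding the exponential moment on the right.

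To bound $\pi\bigl[e^{P\log P^*f}\bigr]$, apply Jensen in the refined form $e^{P\psi/2}\leq P e^{\psi/2}$ (valid since $P$ is a Markov operator and the exponential is convex) with $\psi=\log P^*f$, giving $\pi\bigl[e^{P\log P^*f}\bigr] \leq \|P\sqrt{P^*f}\|_{L^2(\pi)}^2$. Now invoke the LSI for $P^*P$ applied to $h=\sqrt{P^*f}$: the Dirichlet form satisfies $\cD_{P^*P}(h,h)=\pi[h^2]-\|Ph\|^2\geq \alpha\,\ent_\pi(h^2)$, while $\pi[h^2]=\pi[P^*f]=1$ and $\ent_\pi(h^2)=\ent_\pi(P^*f)$, so $\|P\sqrt{P^*f}\|^2 \leq 1-\alpha\,\ent_\pi(P^*f)$. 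Using $\log(1-x)\leq -x$, this produces $\log\pi\bigl[e^{P\log P^*f}\bigr] \leq -\alpha\,\ent_\pi(P^*f)$, and the desired entropy contraction follows by rearranging the resulting inequality.

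The reversible case is a specialization: when $P^*=P$ the hypothesis becomes LSI for $P$ itself, and the argument above goes through with $P^*$ replaced by $P$ throughout, using the lazy-reversible identity $\|Pg\|^2\leq \pi[g\cdot Pg]$ (valid when $P$ has nonnegative spectrum) to match the LSI Dirichlet form $\cD_P(g,g)=\pi[g^2]-\pi[g\cdot Pg]$ with the needed upper bound on $\|P\sqrt{Pf}\|^2$. The main technical subtlety I anticipate is recovering the sharp constant $\delta=\alpha$ exactly, as the naive estimate $\log(1-x)\leq -x$ yields only the slightly weaker rate $\alpha/(1+\alpha)$; either Miclo's refined bookkeeping or a direct Dirichlet-form identity comparing $\ent_\pi(f)-\ent_\pi(P^*f)$ to $\cD_{P^*P}(\sqrt f,\sqrt f)$ can close the gap, and in any event both constants lead to the same order of magnitude in the mixing time estimate of Lemma~\ref{lem:mix}.
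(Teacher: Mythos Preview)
The paper does not give a self-contained proof of the first assertion; it simply cites \cite[Proposition~6]{Miclo}. Your direct argument via Donsker--Varadhan and the refined Jensen bound $e^{P\psi}\le (Pe^{\psi/2})^2$ is a genuine proof sketch and is in fact close to Miclo's own approach, but as you yourself acknowledge, the bookkeeping as written only yields the contraction factor $1/(1+\alpha)$, i.e.\ rate $\alpha/(1+\alpha)$, not the sharp $\delta=\alpha$ claimed in the lemma. Since the statement asserts $\delta=\alpha$ exactly, this is a gap you have not closed: gesturing at ``Miclo's refined bookkeeping'' is not a proof. (For the paper's downstream applications the weaker constant would suffice, but that is not what the lemma says.)

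For the reversible case your account is slightly muddled. When $P^*=P$, the hypothesis of the \emph{first} assertion is LSI for $P^*P=P^2$, not for $P$; the second assertion assumes LSI for $P$, so it is not a mere ``specialization'' --- one must transfer the LSI from $P$ to $P^2$. The paper does this in one line: $P^2\le P$ as quadratic forms in $L^2(\pi)$, hence $\cD_{P^2}\ge\cD_P$, so LSI for $(P,\pi)$ with constant $\alpha$ gives LSI for $(P^2,\pi)$ with the same constant, and then the first assertion applies. Your inequality $\|Pg\|^2\le\pi[g\cdot Pg]$ is exactly $P^2\le P$, but you embed it inside the Donsker--Varadhan argument rather than using it cleanly up front. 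Note that both routes tacitly require $P$ to have nonnegative spectrum, which is not stated in the lemma but holds in all the paper's applications.
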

\begin{proof}
	The first assertion is proved in \cite[Proposition 6]{Miclo}. 
	The second assertion follows from the first and the simple observation that if $P=P^*$ then the LSI for $(P,\pi)$ implies the LSI for $(P^*P,\pi)$  with the same constant since $P^*P=P^2\leq P$ as quadratic forms in $L^2(\pi)$. 
\end{proof}

\section{Spin/edge factorization implies fast mixing: proof of Lemma~\ref{lem:main-intro-se-to-mixing}}\label{sec:entropysw}

As mentioned in the introduction, 
the proof of our main new analytic tool (Theorem~\ref{thm:evenodd-comparison})
has two components.
We show that approximate even/odd factorization implies spin/edge factorization (Lemma~\ref{lem:main-intro}), and then that spin/edge factorization implies 
$O(\log n)$ mixing for the SW dynamics (Lemma~\ref{lem:main-intro-se-to-mixing}).
In this section, we provide the proof of the latter result, whereas Lemma~\ref{lem:main-intro} is proved in the subsequent section.
%In both these sections, we will restrict attention to $n$-vertex cubes of $\bbZ^d$; this will simplify the notation and 
%increase the clarity of the proofs.
%The proofs for bipartite graphs are exactly the same as we will explain afterwards.  
%The proofs also generalize immediately to arbitrary boundary conditions as we discuss at the end of the section in
%Remark~\ref{rem:boundary-entropy}.

% how to obtain our main algorithmic result on the mixing time of
%the SW dynamics (Theorem~\ref{thm:intro:sw})
%from our decay of entropy result (Theorem~\ref{th:main-intro}).
%We start by proving~Theorem~\ref{thm:intro:contract} from which~Theorem~\ref{thm:intro:sw} follows.

\begin{proof}[Proof of Lemma~\ref{lem:main-intro-se-to-mixing}]
%\pietro{added explicit relation between constants: $C$ and $\d=1/C$ which wan undeteremined in the previous version} 
We show that the spin/edge factorization with constant $C$ implies that for all functions $f \ge 0$ with $\mu[f]=1$, one has
\begin{align}
\label{relentsw-main}
\ent_\mu( \PSW f ) \leq (1-\d) \ent_\mu(f),
\end{align}
with $\d=1/C$.
Since the SW dynamics is reversible with respect to $\mu$, we have $\PSW = \PSW^*$,
and the desired mixing time bound follows from Lemma~\ref{lem:mix} and Remark~\ref{rmk:entropy-mixing}. 

The transition matrix of the SW dynamics satisfies
$
\PSW(\si,\tau)= \sum_{A \subseteq M(\sigma)} \nu(A \tc \si) \nu(\tau \tc A),
$
where we recall that $M(\sigma)$ is the set of monochromatic edges in~$\sigma$.  Hence,
\begin{align*}
\PSW f (\si)&= \sum_{\tau \in \Omega} \PSW(\si,\tau)f(\tau) 
%=  \sum_{\tau \in \Omega} \sum_{A \subseteq M(\sigma)} \nu(A \tc \si) \nu(\tau \tc A) f(\tau) 
= \sum_{\tau \in \Omega} \sum_{A \subseteq M(\sigma)} \nu(A \tc \si) \nu(\tau \tc A) \hat f(\tau,A),
% =  \nu[\nu[f\tc A]\tc \si].
\end{align*}
where the function $\hat f: \Joint \mapsto \bbR_+$ is 
the ``lift'' of $f$ to the joint space, i.e., $\hat f (\sigma,A) = f(\sigma)$ for every $(\sigma,A) \in \Joint$. 
Recalling that  we write $\nu[f]$, $\nu[f | A]$, $\nu[f | \s]$
for the expectations of $f$ with respect to the measures $\nu(\cdot)$, $\nu(\cdot\mid A)$, $\nu(\cdot\mid\s)$, respectively, we obtain
\begin{align*}
\PSW f (\si)= \sum_{A \subseteq M(\sigma)} \nu(A \tc \si)\nu[\hat f \mid A] = \nu[\nu[\hat f\tc A]\tc \si]  = \nu[g\tc \si],
\end{align*}
where for ease of notation we set $g := \nu[\hat f\tc A]$.
Since $\mu[f] = 1$, we have $\mu[\PSW f] = 1$ and 
\begin{align*}
\ent_\mu( \PSW f) = \mu[ (\PSW f) \log(\PSW f)] = \mu\left[\nu\left[g\tc\si\right]\log (\nu\left[g\tc\si\right])\right].
\end{align*}
 The convexity of the function $x\cdot \log x$ and Jensen's inequality imply
 $$
\nu\left[g\tc\si\right]\log (\nu\left[g\tc\si\right]) \le \nu\left[g \log g \mid \s\right],
 $$
 and then, since $\nu[g]=\nu[\hat f]=\mu[f]=1$, we have
\begin{align}\label{eq:SWent}
\ent_\mu( \PSW f) \leq \mu\left[\nu\left[g\log g \mid \si\right]\right]
=  \nu[\nu[g\log g]\mid\sigma]
= \nu\left[g \log g\right] = \ent_\nu (g).
%\\&= \nu\left[\nu[\hat f\tc A]\log \nu[\hat f\tc A]\right],
\end{align}
% Therefore,   
% \begin{align*}
% \ent_\mu( \PSW f) &\le \nu\left[\nu[\hat f\tc A]\log \nu[\hat f\tc A]\right] = \ent_\nu (\nu[\hat f\tc A]).
% \end{align*}

For any function $h: \Joint \mapsto \bbR_+$, we have by~\eqref{eq:total:entEdges} that
$ 
\Ent_\nu(h) = \Ent_\nu(\nu[h|A]) + \nu[\Ent_\nu(h|A)]
$. Hence,
$$
\Ent_\nu(\hat f) = \Ent_\nu(g) + \nu[\Ent_\nu(\hat f|A)],
$$
which by \eqref{eq:SWent} gives
$
\ent_\mu( \PSW f)  \le \ent_\nu(\hat f) -  \nu[\Ent_\nu(\hat f|A)].
$
The function $\hat f$ depends on $\si$ only, so $\ent_\nu(\hat f\tc \si)=0$. Therefore,
\begin{align*}
\ent_\mu( \PSW f) &\leq \ent_\nu(\hat f) - \nu\left[\ent_\nu(\hat f\tc A)+\ent_\nu(\hat f\tc \si) \right]. 
\end{align*}
The assumed spin/edge factorization \eqref{entfact2o-intro} then implies that
$
\ent_\mu( \PSW f) \leq (1-\delta)\ent_\nu(\hat f), 
$
with  $\delta=1/C$. Inequality~\eqref{relentsw-main} follows from the fact 
that $\ent_\nu(\hat f) = \ent_\mu(f)$.
\end{proof}
%We can now provide the proof of Theorem~\ref{thm:intro:sw}.
%
%\begin{proof}[Proof of Theorem~\ref{thm:intro:sw}]
%	The SW dynamics is reversible with respect to $\mu$ and so $\PSW = \PSW^*$.
%	Then, it follows from Theorem~\ref{thm:intro:contract}, Lemma~\ref{lem:mix} and Remark~\ref{rmk:entropy-mixing} that the mixing time of the SW dynamics is $O(\log n)$ whenever SSM holds.
%\end{proof}

\begin{remark}\label{rem:boundary-entropy}
	We do not assume anything about the underlying graph in the previous proof, so Lemma~\ref{lem:main-intro-se-to-mixing}  holds for any graph $G$.
	In addition, our proof as stated applies to the Potts measure $\mu$ obtained as the marginal on spins of the joint measure $\nu$. If $\nu$ is as in~\eqref{jointnu-intro}, this yields only the Potts measure on $V$ with the {\em free} boundary condition. 
	However, the proof 
	 extends to the Potts measure with {\em any} boundary condition (or pinning of vertices) by choosing a spin-only boundary condition for $\nu$.
	In particular, Theorem~\ref{thm:intro:sw} holds for arbitrary boundary conditions,
	as stated in the introduction.
	For the special case when $G$ is a cube of $\bbZ^d$, we allow a slightly more general class of boundary conditions, involving both spin and edges, which we call \emph{admissible}; see Definition~\ref{def:admissible} and the examples immediately following it. 	
\end{remark}

\begin{remark}
	\label{rmk:log-sob}
	The entropy contraction established in~\eqref{relentsw-main} implies a modified log-Sobolev inequality, and can
	be viewed as a discrete time version of it; see Section \ref{subseq:prel}.
	The classical log-Sobolev constant, however, is not tight for the SW dynamics. 
	Indeed, the remark in \cite[Section 3.7]{Mart}
	shows a test function $f$ such that ${\var_\mu(\sqrt f)}/{\ent_\mu(f)} = O(n^{-1})$.
	Since $\mathcal D_{\PSW}(\sqrt f,\sqrt f) = \nu [ \var(\sqrt f \mid A) ]$, it follows from monotonicity of variance functional that 
	$\mathcal D_{\PSW}(\sqrt f,\sqrt f) \leq \var_\mu(\sqrt f)$ and so 
	$
	\frac{\mathcal D_{\PSW}(\sqrt f,\sqrt f)}{\ent_\mu(f)} = O(n^{-1})
	$  for this function.
\end{remark}

\section{Factorization of entropy in the joint space}\label{sec:factorization}

In this section, we prove our main technical result, Lemma~\ref{lem:main-intro}, 
which states that approximate even/odd factorization implies approximate spin/edge factorization
for the Potts measure on bipartite graphs.
%A concrete example which cannot be handled is as follows: $V_0=\emptyset$ and $\bbE_0=\partial \bbE$ with wired b.c.\ $\eta_{\partial \bbE}\equiv 1$.  However, it  should be possible to treat all b.c. which have e.g.\ $V_0=\emptyset$ and $\eta_{\bbE_0}$ such that the connected components of $\eta_{\bbE_0}$ have a fixed maximal size. 
%
%In this section we prove our main technical result, Theorem~\ref{th:main-intro}, which expresses the
%factorization of entropy in the joint spin/edge space.
For clarity of notation, and to simplify the proofs, we will restrict attention to $n$-vertex
cubes in $\bbZ^d$, but it should be clear that everything extends to arbitrary bipartite graphs
of constant degree with any spin-only boundary condition.
In addition, on $\bbZ^d$ we are able to extend our results to a more general class of boundary conditions in the joint space, involving both edges and vertices, that we call \emph{admissible}. 

\medskip\noindent\textbf{Admissible boundary conditions.} \
Let $\partial V$ be the set of vertices of $V$ with a neighbor in $\bbZ^d \setminus V$. 
Let $\partial \bbE$ denote the set of edges in $\bbE$ with at least one endpoint in $\partial V$. (Recall that $\bbE$ is the set of edges with both endpoints in $V$.)
We consider boundary conditions for the joint space 
on subsets $V_0 \subseteq \partial V$ and $\bbE_0 \subseteq \partial \EdgeSet$.
Specifically, we let $\psi: V_0 \mapsto [q]$ and $\phi: \bbE_0 \mapsto \{0,1\}$ 
and define 
\begin{equation}\label{jointnusieta}
\nu^{\psi,\phi}(\si,A) = \frac1{Z^{\psi,\phi}}\,p^{|A|}(1-p)^{|\bbE|-|A|}\IND(\si\sim A)\IND(\si\sim \psi)\IND(A\sim\phi),
\end{equation}
where $\si\sim A$ means that $A \subseteq M(\si)$, 
$\s \sim \psi$ that $\s$ and $\psi$ agree on the spins in~$V_0$, and
$A \sim \phi$ that $A$ and $\phi$ agree on the edges in~$\bbE_0$.  As usual, $Z^{\psi,\phi}$
is the corresponding partition function. 

\begin{definition}
	\label{def:admissible}
	We call the boundary condition {\em admissible} if
	$
	\bbE_0\subset \{\{u,v\} \in\partial\bbE:\, u\in V_0\};
	$
	that is, if all edges in $\bbE_0$ have at least one endpoint in $V_0$. 
\end{definition}
Notice that the free boundary condition ($V_0=\emptyset$ and $\bbE_0=\emptyset$) is admissible, and all spin-only boundary conditions ($V_0\subset \partial V$ and $\bbE_0=\emptyset$) are also admissible. In this case, the marginal on spins is just the Potts measure with $\psi$ as the boundary condition on $\partial V$ with $U^\psi=V_0$.
{For some additional examples of admissible boundary conditions see Section~\ref{sec:rc} and Figure~\ref{fig7.1}; in particular,~\eqref{rcsi} captures the effects an admissible boundary condition may have on the random-cluster marginal.

The main motivation for introducing the notion of admissible boundary conditions is that it guarantees that
the spin marginal of $\nu^{\psi,\varphi}$ has the desired exponential decay of correlations if the parameters $q$ and $\beta$ are such that SSM holds.
%If $q$ and $\beta$ are such that SSM holds, then the spin marginal of $\nu^{\psi,\phi}$ for any admissible boundary condition $(\psi,\phi)$ also satisfies the SSM condition. 
%This is the main motivation for introducing the notion of admissible boundary conditions.
%In fact, we shall see that SSM (on the spin marginal) is the {\it only\/} requirement for
%the edge/spin entropy factorization to hold in the joint space. 
We shall see that all of our results concerning the joint measure and its dynamics 
on $\bbZ^d$ extend to the more general class of admissible boundary conditions. 
We can therefore restate Lemma~\ref{lem:main-intro} from the introduction 
for the special case of $\bbZ^d$ allowing arbitrary admissible boundary conditions.

\begin{lemma}\label{lemma:main-intro-ad}
	Let $\nu := \nu^{\psi,\phi}$ be the joint distribution with an admissible boundary condition $(\psi,\phi)$.
	Approximate even/odd factorization with constant $C$ of the spin marginal of $\nu$ implies that
	approximate spin/edge factorization holds with constant $C'=C'(C,d,q,\beta)$.
%	for all positive functions $f: {\Joint} \mapsto \bbR_+$,
%	\begin{align*}
%	\ent_\nu (f)\leq C \,\left(\nu\left[\ent_\nu (f\tc\si)] +\nu[\ent_\nu (f\tc A)\right]\right).
%	\end{align*}
	%\eric{Should we point out that the conclusion is identical to approximate spin/edge factorization in Definition~\ref{def:spin-edge}?}
\end{lemma}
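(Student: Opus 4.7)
The plan is a two-stage reduction. In the first stage I combine~\eqref{eq:total:entSpins} with the hypothesized even/odd factorization on the spin marginal to reduce matters to bounding $\nu[\ent_\nu(f\tc\si_E)]$ and $\nu[\ent_\nu(f\tc\si_O)]$; in the second stage I exploit the product structure of $\nu(\cdot\tc\si_E)$ induced by bipartiteness and admissibility. Throughout, $V_E$ and $V_O$ denote the two sides of the bipartition.

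For the first stage, set $g(\si):=\nu[f\tc\si]$ and write $\mu$ for the spin marginal of $\nu$. By~\eqref{eq:total:entSpins},
\[
\ent_\nu(f)=\ent_\mu(g)+\nu[\ent_\nu(f\tc\si)].
\]
The hypothesized even/odd factorization applied to $g$ gives
\[
\ent_\mu(g) \leq C(\mu[\ent_\mu(g\tc\si_E)]+\mu[\ent_\mu(g\tc\si_O)]).
\]
A second application of~\eqref{eq:total:entSpins} to $\nu(\cdot\tc\si_E)$ yields the identity $\nu[\ent_\nu(f\tc\si_E)] = \mu[\ent_\mu(g\tc\si_E)] + \nu[\ent_\nu(f\tc\si)]$, and analogously for $\si_O$. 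Combining these, it remains to prove
\[
\nu[\ent_\nu(f\tc\si_E)]+\nu[\ent_\nu(f\tc\si_O)] \leq K(\nu[\ent_\nu(f\tc\si)] + \nu[\ent_\nu(f\tc A)])
\]
for some $K=K(\Delta,q,\beta)$.

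For the second stage, the structural observation is that under $\nu(\cdot\tc\si_E)$ the ``stars'' $X_v:=(\si_v,(A_{\{u,v\}})_{u\sim v})$, $v\in V_O$, are mutually independent: bipartiteness ensures that different odd vertices share no variables, while admissibility ($\bbE_0\subseteq\{\{u,v\}\in\partial\bbE:\,u\in V_0\}$) ensures that every pinned variable sits inside a single star. Tensorization of entropy for product measures then gives
\[
\nu[\ent_\nu(f\tc\si_E)] \leq \sum_{v\in V_O}\nu[\ent_{X_v}(f)],
\]
where $\ent_{X_v}(f)$ is entropy under the conditional law of $X_v$ given all other coordinates. Since $|X_v|\leq q\cdot 2^\Delta$ and the conditional probabilities are bounded below by a constant depending only on $q,\Delta,\beta$, a local comparison yields $\ent_{X_v}(f)\leq C_{\mathrm{loc}}(\ent_{\si_v}(f)+\ent_{A_{v\cdot}}(f))$ with $C_{\mathrm{loc}}=C_{\mathrm{loc}}(q,\Delta,\beta)$. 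Summing, using that $\nu(A\tc\si)$ is a product of Bernoullis, one obtains $\sum_v\nu[\ent_{A_{v\cdot}}(f)]=O(1)\cdot\nu[\ent_\nu(f\tc\si)]$; using that $\nu(\si\tc A)$ is a product over connected components of $A$, one obtains $\sum_v\nu[\ent_{\si_v}(f)]=O(1)\cdot\nu[\ent_\nu(f\tc A)]$. Repeating the argument with $E\leftrightarrow O$ and absorbing constants delivers $C'=C'(C,\Delta,q,\beta)$.

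The main obstacle is the local comparison $\ent_{X_v}(f)\leq C_{\mathrm{loc}}(\ent_{\si_v}(f)+\ent_{A_{v\cdot}}(f))$ together with the companion step that converts $\sum_v\nu[\ent_{\si_v}(f)]$ into $\nu[\ent_\nu(f\tc A)]$. The former requires a change-of-measure argument, because the marginal of $\si_v$ under $\nu(X_v\tc\cdot)$ differs from the marginal under $\nu(\si_v\tc\si_{-v},A)$; the latter requires aggregating single-site contributions along the $A$-components, since an odd vertex contributes nonzero $\ent_{\si_v}(f)$ only when all edges incident to it are absent from $A$. Both steps produce only bounded multiplicative constants under the bounded-degree assumption, but carrying them carefully through the summation is where most of the technical effort lies.
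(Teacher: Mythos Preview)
Your first stage is correct and matches the paper's Lemma~\ref{lem:conc} essentially verbatim. The overall architecture of your second stage---exploit the product structure of $\nu(\cdot\tc\si_E)$, tensorize, and apply a local spin/edge comparison at each odd vertex---also matches the paper's Lemma~\ref{lem:tensor}. The gap is in the re-assembly.

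Your two ``summing'' claims
\[
\sum_{v\in V_O}\nu[\ent_{A_{v\cdot}}(f)] = O(1)\cdot \nu[\ent_\nu(f\tc\si)]
\quad\text{and}\quad
\sum_{v\in V_O}\nu[\ent_{\si_v}(f)] = O(1)\cdot \nu[\ent_\nu(f\tc A)]
\]
are false: they are reverse tensorization inequalities for product measures, and the constant is not $O(1)$ but $\Theta(n)$. For a concrete counterexample to the first claim, fix any $\si_0$ (say monochromatic) and take $f(\si,A)=\IND(\si=\si_0)\bigl(1+c(-1)^{|A|}\bigr)+\IND(\si\neq\si_0)$ for small $c>0$. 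Given $\si=\si_0$, the edges are i.i.d.\ Bernoulli$(p)$, and flipping any single block $A_{v\cdot}$ flips the parity of $|A|$; hence each $\ent_{A_{v\cdot}}(f)$ is of the same order as $\ent_\nu(f\tc\si_0)$, and summing over $|V_O|$ blocks produces a factor $\Theta(n)$. The analogous parity function on spins (conditioned on $A=\emptyset$) defeats the second claim. The fact that $\nu(A\tc\si)$ and $\nu(\si\tc A)$ are product measures only gives tensorization in the \emph{forward} direction, which you already used in step~1.

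The paper circumvents this by never applying the local comparison to $f$ directly. Instead it uses the \emph{exact} telescoping decomposition
\[
\ent_\nu(f\tc\si_O)=\sum_{x}\nu\bigl[\ent_x(g_{x-1}\tc\si_O)\,\big|\,\si_O\bigr],\qquad g_x=\nu[f\tc\si_O,\xi_{x+1},\dots,\xi_w],
\]
applies the local comparison (Lemma~\ref{lem:tensorx}) to each $g_{x-1}$, and then re-assembles via Lemma~\ref{lemma:iterative:bounds}: a second telescoping of $\ent_\nu(f\tc\si)$ and $\ent_\nu(f\tc\si_O,A)$ with analogous averaged functions $\psi_x,h_x$, combined with a commutation relation and the variational principle~\eqref{varprin}, yields the term-by-term bound $\nu[\ent_x(g_{x-1}\tc\si)]\le\nu[\ent_x(\psi_{x-1}\tc\si)]$. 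The point is that the averaging in $g_{x-1}$ is precisely what is needed to match the telescoping on the target side; working with $f$ throughout, as you do, loses this structure.
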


For simplicity, we will continue to write $\nu$ for the joint measure $\nu^{\psi,\phi}$ 
and $\mu$ for its marginal on spins.
We shall see that our proofs in this section are largely oblivious to the boundary condition or the geometry of $\bbZ^d$ (in fact, we only require the underlying graph to be bipartite). 
We also remark that, while we could allow a slightly more general family of boundary conditions than the admissible ones, some limitations are needed.  For instance, arbitrary edge boundary conditions may cause long-range dependencies; see, e.g.,~\cite{BSz2,BGV}. 
We proceed next with the proof of Lemma~\ref{lemma:main-intro-ad}.

\subsection{Proof of Lemma~\ref{lemma:main-intro-ad}}

%A concrete example which cannot be handled is as follows: $V_0=\emptyset$ and $\bbE_0=\partial \bbE$ with wired b.c.\ $\eta_{\partial \bbE}\equiv 1$.  However, it  should be possible to treat all b.c. which have e.g.\ $V_0=\emptyset$ and $\eta_{\bbE_0}$ such that the connected components of $\eta_{\bbE_0}$ have a fixed maximal size. 
\medskip\noindent
\textbf{Overview.}\ The following high level observations might be of help before entering the technical details of the proof. First, notice that the conclusion in the theorem would trivially hold true with constant $C=1$ if $\nu$ were a product measure with respect to the two sets of variables $(\sigma,A)$. This is a consequence of standard factorization properties of product measures. Thus, the minimal constant $C$ for which that statement holds is a measure  of the ``cost" for ``separating"  the two sets of variables. 

When the dependencies  between  the two sets of variables are very weak, a factorization statement could be obtained as in \cite{Cesi}. However, in our case the dependencies are not weak, since the spin variables interact locally with the edge variable in a strong way. For instance,  the presence of the edge $xy$ in $A$  forces deterministically the condition $\sigma_x=\sigma_y$. Thus, the fact that our statement holds with a constant $C$ independent of $n$ is highly nontrivial. 

On the other hand, for every $x\in V$ one can separate {\em locally} the two variables $(\sigma_x,A_x)$, where $A_x$ denotes the set of edge variables for edges incident to $x$,  by paying a finite cost $C$; this is the content of Lemma~\ref{lem:tensorx}  below. 
%By adapting a  technique introduced recently in \cite{CP} 
We can then lift this local factorization to a global factorization statement for the conditional measure $\nu(\cdot \mid \sigma_E)$, respectively $\nu (\cdot \mid \sigma_O)$, obtained by conditioning on the spin variables of all even vertices $E\subset V$, respectively of all odd vertices $O\subset V$. This is the content of Lemma~\ref{lem:tensor}. %\antonio{Removed ref to CP}

Lemma~\ref{lem:tensor} is the heart of the proof and relies crucially on the fact that  $\nu(\cdot \mid \sigma_E)$ is a product measure with respect to $\{(\sigma_x,A_x),  x\in O\}$, and $\nu (\cdot \mid \sigma_O)$ is a product measure with respect to $\{(\sigma_x,A_x),  x\in E\}$. Thus, we reduce  the problem of separating the spin/edge variables $(\sigma,A)$ to the problem of separating the even/odd spin variables $(\sigma_E,\sigma_O)$ for the joint distribution $\nu$. 
We then conclude by showing that even/odd factorization for the Potts measure $\mu$ implies
the even/odd factorization for $\nu$. 
This is the content of Lemma~\ref{lem:conc}.  %\antonio{Removed another ref to CP} 
%We then conclude by adapting to our setting one of the main results of \cite{CP} which allows precisely this even/odd factorization. 
%This is the content of Lemma~\ref{lem:conc}.   

We now turn to the actual proof. 
%Let $E$ denote the set of even vertices in $V$ and $O$ the set of odd vertices in $V$.
Let $\nu(\cdot\tc\si_E,A)$ denote the measure $\nu$ conditioned on $\si_E=\{\si_v,\,v\in E\}$ and $A \subseteq \EdgeSet$. Similarly, $\nu(\cdot\tc\si_O,A)$ denotes the measure $\nu$ conditioned on $\si_O=\{\si_v,\,v\in O\}$ and $A$. 
We use $\ent_\nu(f\tc \si_E,A)$ and $\ent_\nu(f\tc \si_O,A)$ to denote the corresponding conditional entropies and $\nu\left[\ent_\nu (f\tc\si_E,A)\right]$, $\nu\left[\ent_\nu (f\tc \si_O,A)\right]$ for their expectations with respect to $\nu$. 
The next lemma shows that conditioning on the spin configuration of the even or the odd sub-lattice can only decrease the entropy of a function with respect to $\nu(\cdot\mid A)$.
 
\begin{lemma}
	\label{lem:conv}
	For all functions $f: \Joint \mapsto \R_+$ we have
	\begin{align*}
	%\label{swent6}
	&\nu\left[\ent_\nu (f\tc A)\right]\geq
	\nu\left[\ent_\nu (f\tc \si_E,A)\right];~\text{ and}\\
	&
	\nu\left[\ent_\nu (f\tc A)\right]\geq
	\nu\left[\ent_\nu (f\tc \si_O,A)\right].
	\end{align*}
\end{lemma}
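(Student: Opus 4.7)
My plan is to derive both inequalities as direct consequences of the basic entropy decomposition identity~\eqref{eq:total:ent-dec}, together with the non-negativity of entropy. Informally, the statement says that conditioning the measure $\nu(\cdot\tc A)$ further on the even (respectively odd) spins can only decrease the entropy of $f$ in expectation, which is a classical monotonicity property of conditional entropy. Since the two inequalities are entirely symmetric under swapping $E$ and $O$, it suffices to prove the first one.

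First I would fix an edge configuration $A\subseteq\bbE$ and apply~\eqref{eq:total:ent-dec} to the conditional probability $\pi:=\nu(\cdot\tc A)$, with $\cF$ the sub-$\sigma$-algebra generated by the even spins $\si_E$. Since $\pi[f\tc\cF]=\nu[f\tc\si_E,A]$ and $\ent_\pi(f\tc\cF)=\ent_\nu(f\tc\si_E,A)$, this yields the pointwise (in $A$) identity
\begin{align*}
\ent_\nu(f\tc A) \,=\, \ent_\nu\bigl(\nu[f\tc\si_E,A]\,\bigm|\,A\bigr) \,+\, \nu\bigl[\ent_\nu(f\tc\si_E,A)\,\bigm|\,A\bigr],
\end{align*}
where in the first term on the right-hand side the random variable $\nu[f\tc\si_E,A]$ is regarded as a function of $\si_E$ under $\nu(\cdot\tc A)$.

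The second step is to drop this non-negative first term, giving the pointwise bound $\ent_\nu(f\tc A)\ge \nu[\ent_\nu(f\tc\si_E,A)\tc A]$. Taking expectation with respect to $\nu$ in the variable $A$ and using the tower property then yields
\begin{align*}
\nu\bigl[\ent_\nu(f\tc A)\bigr] \,\ge\, \nu\bigl[\nu[\ent_\nu(f\tc\si_E,A)\tc A]\bigr] \,=\, \nu\bigl[\ent_\nu(f\tc\si_E,A)\bigr],
\end{align*}
which is the desired first inequality. The second follows verbatim upon replacing $E$ by $O$.

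I do not foresee any genuine obstacle: the whole argument is a short formal consequence of~\eqref{eq:total:ent-dec} and positivity of entropy. The only bookkeeping is recognizing that iterating the decomposition on $\nu$ first by conditioning on $A$ and then, inside each slice, conditioning further on $\si_E$, matches exactly the two-step conditioning appearing on the right-hand side of the claim.
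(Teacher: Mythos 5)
Your proof is correct and essentially matches the paper's: both rest on the standard entropy decomposition \eqref{eq:total:ent-dec} applied with $\pi=\nu(\cdot\tc A)$ and the conditioning on $\si_E$ (resp.\ $\si_O$), followed by discarding the non-negative ``between-scales'' term. The paper performs the algebraic telescoping directly under the outer expectation over $A$, whereas you first state the decomposition pointwise in $A$ and then integrate, but this is a purely cosmetic reorganization of the same argument.
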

\begin{proof}
	We can write
	\begin{align*}
	%\label{swent7}
	\nu\left[\ent_\nu (f\tc A)\right]
	& %= \nu\left[\nu\left[f  \log\left(\frac{f}{\nu[f\tc A]}\right) \,\middle|\, A\right]\right] 
	= \nu\left[f \log \left(\frac{f}{\nu[f\tc A]}\right)\right]\\
	&=\nu\left[f\log \left(\frac{f}{\nu[f\tc \si_E,A]}\right)\right] + \nu\left[f\log \left(\frac{\nu[f\tc\si_E,A]}{\nu[f\tc A]}\right)\right] \\&
	= \nu\left[\ent_\nu (f\tc \si_E,A)\right] +  \nu\left[\nu[f\tc\si_E,A]\log \left(\frac{\nu[f\tc\si_E,A]}{\nu[f\tc A]}\right)\right]
	\\& = \nu\left[\ent_\nu (f\tc \si_E,A)\right] +\nu\left[\ent_\nu (\nu[f\tc\si_E,A]\tc A)\right]
	\\&\geq \nu\left[\ent_\nu (f\tc \si_E,A)\right].
	\end{align*}
	The same argument applies to the odd sites to deduce that 
	$\nu\left[\ent_\nu (f\tc A)\right]\geq
	\nu\left[\ent_\nu (f\tc \si_O,A)\right]$	
\end{proof}

The advantage of working with $\nu(\cdot\tc\si_O,A)$ or $\nu(\cdot\tc\si_E,A)$ instead of $\nu(\cdot\tc A)$ is that once we condition on the spins on all odd (resp.\ even) sites the measure becomes a product over the even (resp.\ odd) vertices, and we can exploit tensorization properties of entropy for product measures. 
The next lemma is a key step in the proof.
\begin{lemma}\label{lem:tensor}
	There exists a constant $\d_1>0$ depending only on $d,\beta,q$ such that, for all functions $f: \Joint \mapsto \R_+$, 
	\begin{align}
	%\label{swent8}
	&\nu\left[\ent_\nu (f\tc \si)\right] + \nu\left[\ent_\nu (f\tc \si_O,A)\right]
	\geq \d_1\,\nu\left[\ent_\nu (f\tc \si_O)\right],	\label{swent8a}
	\\
	&
	\nu\left[\ent_\nu (f\tc \si)\right] + \nu\left[\ent_\nu (f\tc \si_E,A)\right]
	\geq \d_1\,\nu\left[\ent_\nu (f\tc \si_E)\right].	\label{swent8b}
	\end{align}
\end{lemma}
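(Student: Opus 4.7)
The plan is to exploit the product structure of $\nu(\cdot\tc\sigma_O)$ together with the single-vertex spin/edge separation supplied by Lemma~\ref{lem:tensorx}. I focus on~\eqref{swent8a}; \eqref{swent8b} follows by exchanging the roles of $E$ and $O$.

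The key structural observation is that, conditional on $\sigma_O$, the pairs $\{(\sigma_x,A_x)\}_{x\in E}$ are mutually independent: bipartiteness guarantees that every edge of $\EdgeSet$ has a unique even endpoint, so the edge set partitions as $A=\sqcup_{x\in E}A_x$, and the interaction of the block $(\sigma_x,A_x)$ with the rest of the system passes only through the already-frozen neighbouring spins in $\sigma_O$. Hence $\nu(\cdot\tc\sigma_O)=\bigotimes_{x\in E}\nu_x$, where $\nu_x$ is the marginal of $(\sigma_x,A_x)$. Under this decomposition, $\bigcup_{x\in E}\{\sigma_x\}\cup\sigma_O=\sigma$ and $\bigcup_{x\in E}A_x=A$, so~\eqref{swent8a} is precisely a spin/edge factorization for the product measure $\nu(\cdot\tc\sigma_O)$, with ``spin block'' $\sigma_E$ and ``edge block'' $A$.

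Given this reformulation, I would prove~\eqref{swent8a} by lifting the single-vertex factorization of Lemma~\ref{lem:tensorx} to the whole product via a ``tensorization of approximate factorization'' argument: induct on $|E|$, at each step splitting off a vertex $x_0\in E$ by the entropy chain rule
\[
\ent_\nu(f\tc\sigma_O)=\nu[\ent_\nu(f\tc\sigma_O,\sigma_{x_0},A_{x_0})\tc\sigma_O]+\ent_{\nu_{x_0}}(\nu[f\tc\sigma_O,\sigma_{x_0},A_{x_0}]),
\]
applying the inductive hypothesis to the reduced system on $E\setminus\{x_0\}$ inside the first term, applying Lemma~\ref{lem:tensorx} to the second, and combining through Jensen's inequality $\ent(\nu_{x_0}[g])\le\nu_{x_0}[\ent(g)]$ to pull the $\nu_{x_0}$-averages appearing in the second term inside the corresponding entropies. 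Identifying $\bigcup_{x\in E}\sigma_x\cup\sigma_O=\sigma$ and $\bigcup_{x\in E}A_x=A$ then delivers~\eqref{swent8a} with $\delta_1$ depending only on the Lemma~\ref{lem:tensorx} constant $C''$, and hence only on $d,q,\beta$.

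The main obstacle is to perform this lift while keeping the factorization constant $\delta_1^{-1}$ independent of $|E|$. A naive ``tensorize-then-apply-Lemma~\ref{lem:tensorx}-termwise'' approach produces on the right-hand side two sums $\sum_{x\in E}\nu[\ent^{(x)}_A(f)]$ and $\sum_{x\in E}\nu[\ent^{(x)}_\sigma(f)]$ which, by the standard tensorization of entropy applied (in the reverse direction) to the product measures $\nu(\cdot\tc\sigma)$ and $\nu(\cdot\tc\sigma_O,A)$, only \emph{lower}-bound the two global conditional entropies that appear on the right of~\eqref{swent8a}. The chain-rule-plus-Jensen strategy sidesteps this obstruction by working at the level of the factorization inequality itself rather than at the level of sums of single-site entropies; the delicate point will be to verify that each Jensen bound is simultaneously tight enough to be charged to a \emph{global} conditional entropy on the right-hand side of~\eqref{swent8a}, and not merely to the local $A_{x_0}$- or $\sigma_{x_0}$-entropy from which it originated, so that iterating the peel-off does not inflate the constant with $|E|$.
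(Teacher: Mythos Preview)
Your plan is correct and matches the paper's approach: both exploit the product structure of $\nu(\cdot\tc\sigma_O)$ over $\{(\sigma_x,A_x):x\in E\}$, apply the single-site factorization of Lemma~\ref{lem:tensorx} at each site, and then reassemble the local pieces into the two global conditional entropies $\nu[\ent_\nu(f\tc\sigma)]$ and $\nu[\ent_\nu(f\tc\sigma_O,A)]$ without losing a factor of $|E|$. The paper presents this as an explicit telescoping sum in a fixed ordering (with intermediate functions $g_{x-1}=\nu[f\tc\sigma_O,\xi_x,\dots,\xi_w]$) rather than an induction, but the two are equivalent.

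The ``delicate point'' you flag is precisely the content of the paper's Lemma~\ref{lemma:iterative:bounds}, which shows that the telescoped sums $\sum_x\nu[\ent_x(g_{x-1}\tc\sigma)]$ and $\sum_x\nu[\ent_x(g_{x-1}\tc\sigma_O,A)]$ are each bounded by the corresponding \emph{global} conditional entropy. The mechanism is exactly what your ``Jensen'' hint is pointing at: one uses the commutation $\nu_x^A\mu_{x-1}=\mu_{x-1}\nu_x^A$ (valid by the product structure) together with the variational principle $\nu_x^A[h\log(g/\nu_x^A g)]\le\ent_x(h\tc\sigma_O,A)$, which is the rigorous form of ``convexity of entropy under averaging.'' So your instinct that a Jensen-type bound charges each peeled-off term to the global entropy (rather than accumulating multiplicatively) is right; the variational principle~\eqref{varprin} is the tool that makes it go through cleanly.
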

We defer the proof of Lemma~\ref{lem:tensor} to later. Adding up 
\eqref{swent8a} and \eqref{swent8b} and using
 Lemma \ref{lem:conv} we obtain the estimate
\begin{align}\label{swent9}
\nu\left[\ent_\nu (f\tc \si) + %\right]+\nu\left[
\ent_\nu (f\tc A)\right]\geq\frac{\d_1}2\,\nu\left[\ent_\nu (f\tc \si_E) + \ent_\nu (f\tc \si_O)\right].
\end{align}
We then use a generalization of the entropy factorization from~\cite{CP} to reconstruct,
in the presence of approximate even/odd factorization, the global entropy $\ent_\nu(f)$ from the conditional average entropies $\nu\left[\ent_\nu (f\tc \si_E)\right]$ and $\nu\left[\ent_\nu (f\tc \si_O)\right]$ on the right hand side of \eqref{swent9}. 
\begin{lemma}\label{lem:conc}
Approximate even/odd factorization with consant $C$ implies that %there exists a constant $\d_2>0$ such that, 
	for all functions $f: \Joint \mapsto \R_+$, 
	\begin{align*}
	%\label{swent10}
	\nu\left[\ent_\nu (f\tc \si_E) + \ent_\nu (f\tc \si_O)\right]\geq \d_2 \ent_\nu(f),
	\end{align*}
	where $\d_2=1/C$.	%\pietro{added explicit reference to constants} 
\end{lemma}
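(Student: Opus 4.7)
The plan is to lift the even/odd factorization from the spin marginal $\mu$ to the joint measure $\nu$ by exploiting the fact that $g(\sigma) := \nu[f\mid\sigma]$ is a function on the spin space whose distribution under $\nu$ coincides with its distribution under $\mu$. Concretely, I will first apply the chain rule of entropy~\eqref{eq:total:ent-dec} with the sub-$\sigma$-algebra generated by the full spin configuration $\sigma$:
\begin{equation*}
\ent_\nu(f) \;=\; \ent_\nu(\nu[f\mid\sigma]) + \nu[\ent_\nu(f\mid\sigma)] \;=\; \ent_\mu(g) + \nu[\ent_\nu(f\mid\sigma)],
\end{equation*}
where the second equality uses that $g$ depends only on $\sigma$ and the spin marginal of $\nu$ is $\mu$.

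Next, I will apply the assumed even/odd factorization~\eqref{eq:eofactor} to the nonnegative function $g$ on $\Omega$ to obtain $\ent_\mu(g) \le C\,(\mu[\ent_\mu(g\mid\sigma_E)] + \mu[\ent_\mu(g\mid\sigma_O)])$. The key step is then to translate the conditional entropies $\mu[\ent_\mu(g\mid\sigma_E)]$ back to conditional entropies of $f$ under $\nu$. Applying the chain rule one more time, now under the conditional measure $\nu(\cdot\mid\sigma_E)$ with the sub-$\sigma$-algebra generated by $\sigma$, and using that $\nu(\cdot\mid\sigma_E)[f\mid\sigma]=\nu[f\mid\sigma]=g$ and that the $\sigma$-marginal of $\nu(\cdot\mid\sigma_E)$ agrees with $\mu(\cdot\mid\sigma_E)$, one gets
\begin{equation*}
\ent_\nu(f\mid\sigma_E) \;=\; \ent_\mu(g\mid\sigma_E) + \nu[\ent_\nu(f\mid\sigma)\mid\sigma_E].
\end{equation*}
Averaging against $\nu$ and using the tower property yields the identity $\mu[\ent_\mu(g\mid\sigma_E)] = \nu[\ent_\nu(f\mid\sigma_E)] - \nu[\ent_\nu(f\mid\sigma)]$, with the analogous identity holding for $\sigma_O$.

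Substituting the two identities into the even/odd factorization for $\mu$ and adding back $\nu[\ent_\nu(f\mid\sigma)]$ gives
\begin{equation*}
\ent_\nu(f) \;\le\; C\bigl(\nu[\ent_\nu(f\mid\sigma_E)] + \nu[\ent_\nu(f\mid\sigma_O)]\bigr) + (1-2C)\,\nu[\ent_\nu(f\mid\sigma)].
\end{equation*}
Since $C\ge 1$, the coefficient $1-2C\le -1$ is nonpositive, so the last term may simply be dropped, delivering the claimed factorization with $\delta_2 = 1/C$.

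There is no serious obstacle here: once one recognizes that $\nu[f\mid\sigma]$ is a spin-only function and that the spin marginal of $\nu$ is $\mu$, the proof reduces to a careful bookkeeping of conditional entropies via the chain rule. The one place to be slightly careful is ensuring that the conditional-of-conditional entropy $\ent_\nu(f\mid\sigma_E)$ is decomposed along the spin variable in the correct order so that the term $\nu[\ent_\nu(f\mid\sigma)]$ appears with the right sign — this is exactly what makes the constant $C\ge 1$ sufficient to absorb the residual term.
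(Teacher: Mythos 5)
Your proof is correct and takes essentially the same route as the paper's: both decompose $\ent_\nu(f)$ via the chain rule over $\sigma$, apply even/odd factorization to the spin-only function $g=\nu[f\mid\sigma]$, and use the identity $\ent_\nu(f\mid\sigma_E)=\ent_\mu(g\mid\sigma_E)+\nu[\ent_\nu(f\mid\sigma)\mid\sigma_E]$ (and its odd analogue) to pass between $f$-entropies and $g$-entropies. The only difference is presentational bookkeeping—the paper works outward from $\ent_\nu(f\mid\sigma_O)$ and collects terms at the end (using $\delta_2\le 1$ to absorb the residual), while you start from $\ent_\nu(f)$ and drop the residual term directly since $1-2C\le-1$.
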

\begin{proof}
%If $f$ depends only on the spins $\si$ then the result coincides with the factorization into even and odd sites in~Theorem~\ref{thm:cp}. However, here $f$ depends on the edge configuration $A$ as well. 
	We need the following observations:
	\begin{align}
	&\ent_\nu (f\tc \si_O)= 
	\ent_\nu\left(\nu\left[f\tc \si\right]\tc \si_O\right) + \nu\left[\ent_\nu (f\tc \si)\tc \si_O\right] ,
	\label{swent100a}\\
	&
	\ent_\nu (f\tc \si_E)= 
	\ent_\nu\left(\nu\left[f\tc \si\right]\tc \si_E\right) + \nu\left[\ent_\nu (f\tc \si)\tc \si_E\right]  \label{swent100b}.
	\end{align}
	Indeed, to establish~\eqref{swent100a} note that from the definition of conditional entropy we get
	\begin{align*}
	%\label{swent77}
	\ent_\nu (f\tc \si_O)
	&= \nu\left[f\log \left(\frac{f}{\nu[f\tc \si_O]}\right) \,\middle|\, \si_O\right]\\
	&=\nu\left[f\log \left(\frac{f}{\nu[f\tc \si_E,\si_O]}\right)\,\middle|\, \si_O\right] + \nu\left[f\log \left(\frac{\nu[f\tc\si_E,\si_O]}{\nu[f\tc \si_O]}\right)\,\middle|\,\si_O\right] \\
	&=\nu\left[f\log \left(\frac{f}{\nu[f\tc \si_E,\si_O]}\right)\,\middle|\, \si_O\right] + \nu\left[ \nu[f \mid \sigma]\log \left(\frac{\nu[f\tc\si]}{\nu[f\tc \si_O]}\right)\,\middle|\,\si_O\right] \\
	&= \nu\left[\ent_\nu (f\tc \si)\tc \si_O\right] + \ent_\nu\left(\nu\left[f\tc \si\right]\tc \si_O\right),
	%+  \nu[\nu[f\tc\si_E,A]\log (\nu[f\tc\si_E,A]/\nu[f\tc A])]
	%\\& = \nu\left[\ent_\nu (f\tc \si_E,A)\right] +\nu\left[\ent_\nu (\nu[f\tc\si_E,A]\tc A)\right]
	%\\&\geq \nu\left[\ent_\nu (f\tc \si_E,A)\right].
	\end{align*}
	where we also use the fact that $\nu[\cdot\tc \si_E,\si_O]=\nu[\cdot\tc \si]$. 
	The same argument applies to \eqref{swent100b}. 
	
	Now, since the function 	
	$\nu\left[f\tc \si\right]$ depends only on the spin configuration $\si$,  
	$$
	\nu\left[\ent_\nu (\nu[f\tc \si]\tc \si_E) + \ent_\nu (\nu[f\tc \si]\tc \si_O)\right] 
	= 
	\mu\left[\ent_\mu (\nu[f\tc \si]\tc \si_E) + \ent_\mu (\nu[f\tc \si]\tc \si_O)\right]; 
	$$
	and we may apply the approximate even/odd factorization
	%from~\cite{CP}\alistair{We use Thm 4.3 of \cite{CP} twice; should we state it
	%precisely as a theorem (maybe in Section 2), along with a brief interpretation?  
	%Currently the use of it is a bit informal,
	%and its relationship to stuff in the paper is unclear.}
	to the function $\nu\left[f\tc \si\right]$.  
	Then, there exists a constant $\d_2\in(0,1]$ such that
	\begin{align}
	\label{swent011}
	\mu\left[\ent_\mu (\nu[f\tc \si]\tc \si_E) + \ent_\mu (\nu[f\tc \si]\tc \si_O)\right] 
	& \geq 
	\d_2\, \ent_\mu \left(\nu\left[f\tc \si\right]\right).
	\end{align}
	Therefore, observing that 
	$$
	\nu\left[
	\nu\left[\ent_\nu (f\tc \si)\tc \si_O\right]+  \nu\left[\ent_\nu (f\tc \si)\tc \si_E\right]\right]=2\,\nu\left[\ent_\nu (f\tc \si)\right],
	$$
	we obtain from~\eqref{swent100a},~\eqref{swent100b} and~\eqref{swent011}
	\begin{align*}
	%\label{swent11}
	\nu\left[\ent_\nu (f\tc \si_E) + \ent_\nu (f\tc \si_O)\right]
	&%=  \nu\left[\ent_\nu (\nu[f\tc \si]\tc\si_E) + \ent_\nu (\nu[f\tc \si]\tc \si_O)\right] + 2\,\nu\left[\ent_\nu (f\tc \si)\right]\\& 
	\geq 
	\d_2\, \ent_\nu \left(\nu\left[f\tc \si\right]\right) + 2\,\nu\left[\ent_\nu (f\tc \si)\right].
	\end{align*}
	Since $\d_2\leq 1$, the standard decomposition in~\eqref{eq:total:entSpins}
	%$\ent_\nu (f)= \ent_\nu\left(\nu\left[f\tc \si\right]\right) + \nu\left[\ent_\nu (f\tc \si)\right]$
	implies 
	\begin{align*}
	%\label{swent1111}
	\nu\left[\ent_\nu (f\tc \si_E) + \ent_\nu (f\tc \si_O)\right]
	\geq 
	\d_2\,\ent_\nu( f),
	\end{align*}
	as claimed.
\end{proof}

\subsection{Proofs of main results}

The proofs of Lemma~\ref{lem:main-intro} and Theorems~\ref{thm:intro:sw} and~\ref{thm:evenodd-comparison} are now immediate. 

\begin{proof}[Proof of Lemma~\ref{lem:main-intro}]
	We note that inequality \eqref{swent9} and Lemma~\ref{lem:conc}
	are valid for any bipartite graph of bounded degree; the result follows by taking $C=2/\d_1\d_2$.
\end{proof}

\begin{proof}[Proof of Theorem~\ref{thm:evenodd-comparison}]
It follows immediately from Lemmas~\ref{lem:main-intro} and~\ref{lem:main-intro-se-to-mixing}.
\end{proof}

We now also prove our main theorem (Theorem~\ref{thm:intro:sw}).  We use the following result of~\cite{CP} that under SSM the
even/odd factorization holds.

\begin{theorem}[Theorem~4.3 in~\cite{CP}]
	\label{thm:cp}
    SSM implies that there exists a constant $\delta>0$ such that for all cubes of $\bbZ^d$,
    all boundary conditions, and
    for all functions $f: \Omega \mapsto \bbR_+$,
	\begin{align*}
	\mu\left[\ent_\mu (f\tc \si_E) + \ent_\mu (f\tc \si_O)\right] 
	& \geq 
	\delta\, \ent_\mu \left(f\right).
	\end{align*}
\end{theorem}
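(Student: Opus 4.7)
The plan is to follow a recursive block-factorization strategy in the spirit of Cesi and Martinelli--Olivieri, bootstrapping from the classical two-block factorization of entropy that SSM is known to imply. The key starting ingredient is that, for any cube $V$ split into two overlapping sub-cubes $V_1, V_2$ with $V_1 \cup V_2 = V$ and overlap thickness $\ell$, SSM yields
\begin{equation*}
\ent_\mu(f) \leq (1+\varepsilon(\ell)) \cdot \mu\bigl[\ent_\mu(f \mid \sigma_{V_1^c}) + \ent_\mu(f \mid \sigma_{V_2^c})\bigr],
\end{equation*}
with $\varepsilon(\ell) \to 0$ exponentially in $\ell$; this is the standard block-factorization lemma in \cite{Cesi,MOI,MOII,SZ}, proved via the variational characterization of entropy together with a coupling in the overlap region.

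Starting from the base case of a constant-sized cube, where even/odd factorization holds with a finite constant depending only on the cube size, $q$, and $\beta$ (a finite-dimensional statement that is easily verified), I would proceed by induction on the side length $L$. For the inductive step, apply the two-block inequality above to split $V_L$ into sub-cubes $V_1, V_2$. The inductive hypothesis, applied on each sub-cube $V_i$ with its induced boundary condition (SSM passes to sub-cubes with arbitrary boundary conditions), gives
\begin{equation*}
\mu\bigl[\ent_\mu(f \mid \sigma_{V_i^c})\bigr] \leq C_i \cdot \mu\bigl[\ent_\mu(f \mid \sigma_{V_i^c}, \sigma_{V_i \cap E}) + \ent_\mu(f \mid \sigma_{V_i^c}, \sigma_{V_i \cap O})\bigr].
\end{equation*}
Each term on the right is then dominated by $\mu[\ent_\mu(f\mid \sigma_E)]$ or $\mu[\ent_\mu(f\mid \sigma_O)]$ respectively, using the monotonicity of expected conditional entropy: indeed, the $\sigma$-algebra generated by $(\sigma_{V_i^c}, \sigma_{V_i \cap E})$ strictly contains the one generated by $\sigma_E$ (it contains, in addition, all odd spins in $V_i^c$), and conditioning on a finer $\sigma$-algebra can only decrease the expected entropy.

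The main obstacle is controlling the accumulation of multiplicative errors across scales. Naively, at every level of the recursion the constant is multiplied by a factor $(1+\varepsilon(\ell_L))$, and there are $\Theta(\log L)$ levels, so if $\varepsilon$ were bounded away from zero the constant would blow up polynomially in $L$. To avoid this, the overlap $\ell_L$ must grow with $L$ (for instance, polynomially like $\ell_L = L^\alpha$ with some $\alpha \in (0,1)$) so that $\varepsilon(\ell_L)$ decays super-geometrically across the recursion levels, making the product $\prod_k (1+\varepsilon(\ell_{L_k}))$ converge to a finite constant uniformly in $L$. The delicate bookkeeping, balancing the overlap width against the sub-cube geometry so that the two-block inequality remains valid at every scale while the inductive constants from $V_1$ and $V_2$ stay under control, is the technically most demanding part of the argument; the analogous difficulty appears already in the log-Sobolev proofs of \cite{MOI,MOII,Cesi} and can be handled by the same kind of scale-adapted decomposition.
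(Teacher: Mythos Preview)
First, note that the paper does not prove this statement; it is quoted verbatim from \cite{CP} and used as a black box. So there is no ``paper's own proof'' here to compare against, only the original argument in \cite{CP}.

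Your recursive scheme has a genuine accounting gap. After applying the two-block inequality and then the inductive hypothesis on each sub-cube $V_1,V_2$, you bound each of the four resulting local terms by the \emph{global} quantities $\mu[\ent_\mu(f\mid\sigma_E)]$ or $\mu[\ent_\mu(f\mid\sigma_O)]$ via monotonicity. This produces the recursion
\[
C_L \;\le\; (1+\varepsilon(\ell_L))\bigl(C_{L_1}+C_{L_2}\bigr),
\]
not $C_L \le (1+\varepsilon(\ell_L))\max(C_{L_1},C_{L_2})$ as you implicitly assume when you write that ``at every level of the recursion the constant is multiplied by a factor $(1+\varepsilon(\ell_L))$.'' The additive combination of the two sub-cube constants contributes an extra factor of roughly~$2$ per level, so after $\Theta(\log L)$ levels the constant is polynomial in~$L$, not $O(1)$, regardless of how fast $\varepsilon(\ell_L)$ decays. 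The monotonicity step is correct but too lossy: once you replace $\mu[\ent_\mu(f\mid\sigma_{V_i^c},\sigma_{V_i\cap E})]$ by the global $\mu[\ent_\mu(f\mid\sigma_E)]$, you have discarded all locality to $V_i$, and nothing prevents the double-counting across sub-cubes.

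Your appeal to \cite{Cesi,MOI,MOII} does not rescue this. In Cesi's single-site tensorization the target functional $\sum_{v\in V}\mu[\ent_v(f)]$ has $|V|$ terms, and the $2^{\Theta(\log L)}$ leaves of the recursion tree correspond (up to bounded overlap) to that many small boxes, each contributing its own local summand; the doubling is therefore absorbed by the additive structure of the target. For the even/odd factorization the target has only \emph{two} terms, so there is no such additive sink, and the analogy breaks down precisely at the step you flag as ``delicate bookkeeping.'' The proof in \cite{CP} avoids this by organizing the recursion so that the sub-cube constants combine multiplicatively rather than additively; this requires keeping the conditional entropies localized throughout the induction and exploiting the product structure of $\mu(\cdot\mid\sigma_E)$ and $\mu(\cdot\mid\sigma_O)$, which is an additional idea beyond what you have sketched.
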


\begin{proof}[Proof of Theorem~\ref{thm:intro:sw}]
	Theorem~\ref{thm:cp} \cite[Theorem 4.3]{CP} implies that the even/odd factorization
	holds for any boundary condition whenever SSM holds.
	Then, from Lemma~\ref{lemma:main-intro-ad} we know that approximate spin/edge factorization holds;
	the result then follows from applying Lemmas~\ref{lem:main-intro} and~\ref{lem:main-intro-se-to-mixing}.
\end{proof}

%\begin{remark}\label{rem:ssmrem}
%	We remark that the only part of the proof where the SSM assumption is used is Lemma \ref{lem:conc}. This is also the only place where we use the geometry of~$\bbZ^d$. The rest of %results before Lemma \ref{lem:conc} are
%	the analysis, namely the inequality \eqref{swent9}, is valid for any bipartite graph~$G$, for any $\beta > 0$ and integer $q \ge 2$. 
%\end{remark}

It remains for us to provide the proof of Lemma~\ref{lem:tensor}, which we do in the next subsection.

\subsection{Proof of Lemma \ref{lem:tensor}}

Before giving the proof of Lemma \ref{lem:tensor}, we mention several useful facts about the joint distribution~$\nu$.
The first key fact is that, for any fixed configuration $\si_O$ of spins on the odd sub-lattice, the conditional probability $\nu(\cdot\tc\si_O)$ is a product measure. 
That is,
\begin{align}\label{eo1}
\nu(\cdot\tc\si_O)
= \bigotimes_{x\in E}\nu_x(\cdot\tc\si_O),
\end{align}
where,
for each $x\in E$, $\nu_x(\cdot\tc\si_O)$ is the probability measure on $\{1,\dots,q\}\times \{0,1\}^{\deg(x)}$,
where $\deg(x)$ denotes the degree of $x$,  
 described as follows: pick the spin of site $x$ according to the Potts measure on $x$ conditioned on the spin of its neighbors in $\s_O$; then, independently for every edge $xy \in \EdgeSet$ incident to the vertex $x$, if $\si_x=\si_y$ set $A_{xy}=1$ 
with probability $p$ and set $A_{xy}=0$ otherwise; if $\si_x\neq\si_y$, set $A_{xy}=0$.  
(Note that in this section, to simplify notation, we shall use $xy$ to denote the edge $\{x,y\}$,
and view the edge configuration $A$ as a vector in $\{0,1\}^\EdgeSet$.)

Consider now the measure $\nu(\cdot\tc\si_O,A)$ obtained by further conditioning on a valid configuration of all edge variables $A$. Here $A$ is {\it valid\/} if it is compatible with the fixed spins $\si_O$. 
This is again a product measure; namely
\begin{align}\label{eo11}
\nu(\cdot\tc\si_O,A)
= \bigotimes_{x\in E}\nu_x(\cdot\tc\si_O,A),
\end{align}
where $\nu_x(\cdot\tc\si_O,A)$ is the probability measure on $\{1,\dots,q\}$ that is uniform if $x$ has no incident edges in~$A$, and is concentrated on the unique admissible value given $\si_O$ and $A$ otherwise.

Next, we note that $\nu(\cdot\tc\si)$ is a product of Bernoulli($p$) random variables over all monochromatic edges in $\si$, while it is concentrated on $A_e=0$ on all remaining edges. Therefore we may write
\begin{align}\label{eo12}
\nu(\cdot\tc\si)
= \bigotimes_{x\in E}\nu_x(\cdot\tc\si),
\end{align}
where $\nu_x(\cdot\tc\si)$ is the probability measure on $\{0,1\}^{\deg(x)}$ given by  the product of Bernoulli($p$) variables on all edges $xy$ incident to $x$ such that $\si_x=\si_y$ and is concentrated on $A_{xy}=0$ if $\si_x\neq\si_y$. 

We write 
$\ent_x(\cdot\tc \si_O)$, $\ent_x(\cdot\tc \si_O,A)$, $\ent_x(\cdot\tc \si)$
for the entropies with respect to the distributions $\nu_x(\cdot\tc \si_O)$, $\nu_x(\cdot\tc \si_O,A)$, $\nu_x(\cdot\tc \si)$ respectively. 
The first observation is that, for every site $x$, there is a local factorization of entropies in the following sense. 
\begin{lemma}\label{lem:tensorx}
	There exists a constant $\d_1>0$ such that, for all functions $f\geq 0$ and all $x\in E$,
	\begin{align}\label{swent18}
	\nu_x\left[\ent_x (f\tc \si)\tc \si_O\right] + \nu_x\left[\ent_x (f\tc \si_O,A)\tc \si_O\right]
	\geq \d_1\,\ent_x (f\tc \si_O).
	\end{align}
\end{lemma}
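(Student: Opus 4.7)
\textbf{Plan for the proof of Lemma~\ref{lem:tensorx}.} The key point is that this is a purely \emph{finite-dimensional} entropy factorization. For $x\in E$ with $(\sigma_O)_{N(x)}$ fixed, the measure $\nu_x(\cdot\tc\sigma_O)$ lives on the finite set
\[
\Gamma_x \;=\; \bigl\{(s,a)\in[q]\times\{0,1\}^{\deg(x)} \;:\; a_{xy}=1 \Rightarrow s=(\sigma_O)_y \;\forall y\in N(x)\bigr\},
\]
whose cardinality is at most $q\cdot 2^{2d}$ and each of whose atoms has $\nu_x(\cdot\tc\sigma_O)$-mass bounded below by a constant $c=c(d,q,\beta)>0$, uniformly in $\sigma_O$. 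On $\Gamma_x$, the two terms on the left-hand side of \eqref{swent18} are precisely the average entropies under $\nu_x(\cdot\tc\sigma_O)$ with respect to the two sub-$\sigma$-algebras $\mathcal{F}_\sigma=\sigma(\sigma_x)$ and $\mathcal{F}_A=\sigma(A_x)$, which together generate the full $\sigma$-algebra on $\Gamma_x$. Hence \eqref{swent18} is a local block-factorization of entropy across $(\mathcal{F}_\sigma,\mathcal{F}_A)$.

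First I would fix $\sigma_O$ and define the best possible constant,
\[
\delta_1(\sigma_O) \;:=\; \inf_{f}\;\frac{\nu_x[\ent_x(f\tc\sigma)\tc\sigma_O]+\nu_x[\ent_x(f\tc\sigma_O,A)\tc\sigma_O]}{\ent_x(f\tc\sigma_O)},
\]
the infimum running over $f:\Gamma_x\to\mathbb{R}_+$ that are not $\nu_x(\cdot\tc\sigma_O)$-a.s.\ constant. By homogeneity of both numerator and denominator I can restrict to $f$ with $\nu_x[f\tc\sigma_O]=1$; the uniform lower bound on the atoms allows a truncation $f\mapsto f\wedge M$ argument that confines the optimization to a compact set, so the infimum is attained by some $f^\star$.

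Next I would show $\delta_1(\sigma_O)>0$. If the minimum were $0$, then $f^\star$ would satisfy $\nu_x[\ent_x(f^\star\tc\sigma)\tc\sigma_O]=0$ and $\nu_x[\ent_x(f^\star\tc\sigma_O,A)\tc\sigma_O]=0$, i.e.\ $f^\star$ would be $\mathcal{F}_\sigma$-measurable (depending only on $s$) \emph{and} $\mathcal{F}_A$-measurable (depending only on $a$). I would then use the fact that the ``empty edge'' state $(s,\mathbf{0})$ lies in $\Gamma_x$ for every $s\in[q]$: by $\mathcal{F}_A$-measurability $f^\star(s,\mathbf{0})$ is independent of $s$, and by $\mathcal{F}_\sigma$-measurability $f^\star(s,a)=f^\star(s,\mathbf 0)$ for every $(s,a)\in\Gamma_x$; together these force $f^\star$ to be constant, contradicting $\ent_x(f^\star\tc\sigma_O)>0$.

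Finally, to obtain a uniform constant I note that $\nu_x(\cdot\tc\sigma_O)$ depends on $\sigma_O$ only through the finitely many ($\le q^{2d}$) possible restrictions $(\sigma_O)_{N(x)}$, so $\delta_1:=\min_{\sigma_O}\delta_1(\sigma_O)>0$ depends only on $d,q,\beta$. The main obstacle I anticipate is the compactness step: ensuring that the infimum defining $\delta_1(\sigma_O)$ is attained rather than approached by a sequence of functions that degenerate (e.g.\ concentrate on a single atom or blow up). This is handled by the uniform positive lower bound on $\nu_x$-atoms together with scale-invariance of the ratio, which together justify restricting to a compact family of bounded normalized densities.
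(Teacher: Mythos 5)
Your approach is a genuine alternative to the paper's: the paper proves a local spectral gap for the $(\sigma_x,A_x)$-chain by a two-step coupling and then transfers between variance and entropy via the Diaconis--Saloff-Coste and Latala--Oleszkiewicz inequalities, whereas you propose an abstract minimization over the finite simplex of local densities. Your structural observation is correct and well spotted: on $\Gamma_x$, the only nontrivial fiber of $\nu_x(\cdot\tc\sigma_O,A)$ is $a=\mathbf 0$, and since $(s,\mathbf 0)\in\Gamma_x$ for every $s$, a density that is simultaneously constant on each $\sigma$-fiber and constant on the $A=\mathbf 0$ fiber must be globally constant; this is exactly the ergodicity mechanism that the paper exploits in its coupling.

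However, the compactness step has a genuine gap. The normalized densities $\{f\geq 0:\nu_x[f\tc\sigma_O]=1\}$ already form a compact simplex (bounded by $1/c$ where $c$ is the minimal atom mass), so no truncation is needed; the real problem is that once you remove the constant $f\equiv 1$ the set is no longer closed, and the ratio $N(f)/\ent_x(f\tc\sigma_O)$ is a $0/0$ indeterminacy at $f=1$. The infimum may therefore be approached by a sequence $f_n\to 1$ with no limiting non-constant extremizer, and your argument ``zero numerator implies $f^\star$ constant, contradiction'' does not apply to that scenario because at $f^\star=1$ the ratio is simply undefined rather than zero. Writing $f_n=1+\epsilon_n g_n$ one sees that along such a sequence both $N(f_n)$ and $\ent_x(f_n\tc\sigma_O)$ are $\Theta(\epsilon_n^2)$, and the limit of the ratio is precisely the two-block \emph{variance} Rayleigh quotient for the alternating $(\sigma_x,A_x)$ Gibbs sampler. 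Ruling out that this limit vanishes is exactly a positive-spectral-gap statement, which is the content of the paper's coupling argument that your write-up omits. So the compactness route can in principle be completed (case-split into $f^\star\neq 1$, handled by your observation, and $f^\star=1$, handled by the second-order expansion), but as stated it silently needs the spectral-gap ingredient it was trying to avoid.
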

\begin{proof}
	For $x \in V$, let $A_x$ be random variable in $\{0,1\}^{\deg(x)}$ corresponding to the configuration of the edges incident to $x$ in $A$.
	If we replace entropy by variance, then \eqref{swent18} is a spectral gap inequality for the Markov chain where the variable $(\si_x,A_x)\in [q]\times \{0,1\}^{\deg(x)} =: \mathcal S$ is updated as follows. At each step, with probability $1/2$ the spin $\si_x$ is updated with a sample from $\nu_x(\cdot\tc\si_O,A)$, and with probability $1/2$ the edges $A_x$ incident to $x$ are simultaneously updated with a sample from $\nu_x(\cdot\tc\si)$. 
	Let $P_x = \frac{Q_x + S_x}{2}$ denote the transition matrix of this Markov chain, where 
	$Q_x$, $S_x$ are the stochastic matrices corresponding to the spin and edge moves at $x$, respectively.
	Let $\mathcal D_{P_x}$, $\mathcal D_{Q_x}$ and $\mathcal D_{S_x}$ denote the corresponding Dirichlet forms.
	Observe that, by updating first the edges with an empty configuration and then the spin, two arbitrary initial configurations can be coupled after two steps with probability at least $\frac14(1-p)^{-2d}$, and thus for any function $f: \mathcal S \mapsto \bbR_+$
	$$
	\frac{\mathcal D_{Q_x}(f,f)+\mathcal D_{S_x}(f,f)}{2} = \mathcal D_{P_x}(f,f) \ge \d_0 \var_x (f\tc \si_O),
	$$
    where $\d_0>0$ is a constant depending only on $p$ and $d$. 
	Using the standard facts that
	$\mathcal D_{Q_x}(f,f) = \nu_x\left[\var_x (f\tc \si)\tc \si_O\right]$ and $\mathcal D_{S_x}(f,f) = \nu_x\left[\var_x (f\tc \si_O,A)\tc \si_O\right]$, we arrive at the inequality
	\begin{align}\label{swent19}
	\frac {\nu_x\left[\var_x (f\tc \si)\tc \si_O\right] +\nu_x\left[\var_x (f\tc \si_O,A)\tc \si_O\right]}{2}
	\geq \d_0\var_x (f\tc \si_O).
	\end{align}

	A well known general relation between entropy and variance 
	%of Bernoulli random variables
	(see, e.g., Theorem A.1 and Corollary A.4 in \cite{DSC}) shows that, for all $f\geq 0$,
	\begin{equation}
	\label{rough1}
	\ent_x (f\tc \si_O)\leq C_1\var_x (\sqrt f\tc \si_O),
	\end{equation}
	where $C_1=C_1(q,p,d)$ is a constant independent of $n$, since
	we are considering the conditional measure at the single site $x$.
	Thus, %the conclusion \eqref{swent18} follows by 
	applying \eqref{swent19} to $\sqrt f$ instead of $f$, we obtain
	\begin{align}\label{swent20}
   	\frac {\nu_x\left[\var_x (\sqrt{f}\tc \si)\tc \si_O\right] +\nu_x\left[\var_x (\sqrt{f}\tc \si_O,A)\tc \si_O\right]}{2} \geq 	\frac{\d_0}{C_1}\,\ent_x (f\tc \si_O). 
	\end{align}
	The conclusion \eqref{swent18} follows by recalling that for any $f\geq 0$ the variance of $\sqrt f$ is at most the entropy of $f$ for any underlying probability measure; see, e.g., \cite[Lemma~1]{Latala}. 
	In particular, 
	$\var_x (\sqrt f\tc \si)\leq \ent_x (f\tc \si)$ and $\var_x (\sqrt f\tc \si_O,A)\leq \ent_x (f\tc \si_O,A)$.
\end{proof}

To prove Lemma \ref{lem:tensor}, we need to lift the inequality of Lemma \ref{lem:tensorx} to the product measure 
$\nu(\cdot\tc\si_O)=\otimes_{x\in E}\nu_x(\cdot\tc\si_O)$.
%, $\nu_x(\cdot\tc\si_O,A)$, and $\nu_x(\cdot\tc\si)$. 
%To this end we adapt ideas from~\cite{CP} to the joint measure setting.
% specifically from the main tensorization lemma in [CP], see Lemma 3.2 there. 

\begin{proof}[Proof of Lemma~\ref{lem:tensor}]
We will prove~\eqref{swent8a}; exactly the same argument applies to~\eqref{swent8b}. 
Let $x=1,\dots, w$ denote an arbitrary ordering of the even sites $x\in E$.
%\antonio{Notation change: $n \rightarrow w$; we should not use $n$ for the number of even sites.}.
Let $A_x \in \{0,1\}^{\deg(x)}$ be the random variable corresponding to the state of the edges incident to $x$. We write $\xi_x=(\si_x,A_x)$ for the pair of variables at $x$. 
We first observe that
\begin{align}\label{swent21}
\ent_\nu(f\tc\si_O) = \sum_{x=1}^w
\nu\left[ \ent_x (g_{x-1}\tc\si_O)\tc \si_O\right],
\end{align}
where $g_x=\nu\left[f\tc \si_O,\xi_{x+1},\dots,\xi_{w}\right]$, so that 
%set $A_i=\{1,\dots,i\}$, and 
$g_0=f$ and $g_{w}=\nu\left[f\tc \si_O\right]$. To prove \eqref{swent21}, we note that since $\nu(\cdot\tc\si_O)=\otimes_{x\in E}~\nu_x(\cdot\tc\si_O)$, one has 
%$g_x=\nu\left(f\tc \si_O,\si_{A_x}\right)$ and 
$\nu_x[g_{x-1}\tc \si_O]=g_{x}.$ 
Therefore,
%To prove \eqref{tele}, set %$g=\mu_{\L_0}f$, 
%$g_i=\mu_{\L_i}f$, and note that $g_{i}=\mu_{\L_{i}}g_{i-1}$ by \eqref{DLR}. Therefore,
\begin{align}
\ent_\nu(f\tc\si_O) &= \nu\left[g_0
\log\left(g_0/g_{w}\right)\tc\si_O\right]
% = \mu\left[g_0\log g_0\right] - \mu\left[g_k\log g_k\right]
%\notag\\
%&
=\sum_{x=1}^{w}  \nu\left[g_{0}\log \left(g_{x-1}/g_{x}\right)\tc\si_O\right]\notag.
\end{align}
Since the $g_x$ are (conditional) expectations, we deduce
\begin{align}
\ent_\nu(f\tc\si_O) 
&=\sum_{x=1}^{w}  \nu\left[g_{x-1}\log \left(g_{x-1}/g_{x}\right)\tc\si_O\right] \notag\\
&=\sum_{x=1}^{w}  \nu\left[\nu_x\left[g_{x-1}\log \left(g_{x-1}/g_{x}\right)\tc\si_O\right]\tc\si_O\right]\notag\\
&=\sum_{x=1}^{w}\nu\left[\ent_{x} (g_{x-1}\tc\si_O)\tc\si_O\right]. \label{tele1}
\end{align}
From \eqref{tele1}, using Lemma 
\ref{lem:tensorx} we obtain
\begin{align}
\d_1\,\ent_\nu (f\tc \si_O)&\leq 
\sum_{x=1}^w
\nu\left[\nu_x\left[\ent_x (g_{x-1}\tc \si)\tc \si_O\right] + \nu_x\left[\ent_x (g_{x-1}\tc \si_O,A)\tc \si_O\right]
\tc \si_O\right]\notag\\&
= \sum_{x=1}^w
\nu\left[\ent_x (g_{x-1}\tc \si) + \ent_x (g_{x-1}\tc \si_O,A)
\tc \si_O\right].
\label{swent22}
\end{align}
Observe that 
$\sum_{x=1}^w \nu\left[\ent_x (g_{x-1}\tc \si) \mid \sigma_O \right]$ and
$\sum_{x=1}^w \nu\left[\ent_x (g_{x-1}\tc \si_O,A)\tc \si_O\right]$ are
``tensorized'' versions of 
$\nu\left[\ent_\nu (f\tc \si)\tc \si_O\right]$
and 
 $\nu\left[\ent_\nu (f\tc \si_O,A)\right]$, respectively, which are the terms on the right hand side of~\eqref{swent8a}.
Using similar but somewhat more involved ideas to those used to derive \eqref{tele1}, we can establish the following.

\begin{lemma}
	\label{lemma:iterative:bounds}
	 \
\begin{enumerate}
\item $\sum_{x=1}^w
\nu\left[\ent_x (g_{x-1}\tc \si)\tc \si_O\right]\leq \nu\left[\ent_\nu (f\tc \si)\tc \si_O\right]$
\item $\sum_{x=1}^w
\nu\left[ \ent_x (g_{x-1}\tc \si_O,A)
\tc \si_O\right]
\leq \nu\left[\ent_\nu (f\tc \si_O,A)\tc \si_O\right]$
\end{enumerate}
\end{lemma}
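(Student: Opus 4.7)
Both bounds follow the same template: apply the chain rule to the appropriate product decomposition of $\nu$, then compare term-by-term using (i) convexity of the entropy functional $f\mapsto\ent_\pi(f)$ (immediate from the variational formula \eqref{varprin}) and (ii) a conditional-independence property of $\nu$ that makes the relevant convex-combination weights independent of the variable being integrated in the single-site entropy.

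For the first inequality, define $h_{x-1}:=\nu[f\tc\sigma,A_{\ge x}]$, so that $h_0=f$ and $h_w=\nu[f\tc\sigma]$. Since $\nu(\cdot\tc\sigma)=\bigotimes_x\nu_x(\cdot\tc\sigma)$ is a product on the edge variables, the same chain rule computation used to derive \eqref{tele1} yields
\[
\nu[\ent_\nu(f\tc\sigma)\tc\sigma_O]=\sum_x\nu[\ent_x(h_{x-1}\tc\sigma)\tc\sigma_O],
\]
so it suffices to prove term-by-term the bound $\nu[\ent_x(g_{x-1}\tc\sigma)\tc\sigma_O]\le\nu[\ent_x(h_{x-1}\tc\sigma)\tc\sigma_O]$. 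By the tower property, $g_{x-1}=\nu[h_{x-1}\tc\sigma_O,\sigma_{E,\ge x},A_{\ge x}]$ is obtained by averaging $h_{x-1}$ over $\sigma_{E,<x}$ with weights $\nu(\sigma_{E,<x}\tc\sigma_O,\sigma_{E,\ge x},A_{\ge x})$. Using the product form of $\nu$ established in the paragraph preceding \eqref{eo1}, one checks that the conditional distribution $\nu(\sigma_{E,<x},A_{<x},A_x\tc\sigma_O,\sigma_{E,\ge x},A_{>x})$ factorizes as $\nu_x(A_x\tc\sigma_O,\sigma_x)\cdot\prod_{y<x}\nu_y(\sigma_y,A_y\tc\sigma_O)$, so that $A_x$ is conditionally independent of $\sigma_{E,<x}$ given $(\sigma_O,\sigma_{E,\ge x},A_{>x})$; equivalently, the averaging weights do not depend on $A_x$. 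Moreover $\nu_x(\cdot\tc\sigma)$ depends on $\sigma$ only through $(\sigma_x,\sigma_{\partial x})\subset(\sigma_{E,\ge x},\sigma_O)$. Viewing $g_{x-1}$ and $h_{x-1}$ as functions of $A_x$ with all other variables fixed, $g_{x-1}$ is therefore a convex combination of the $h_{x-1}$'s with $A_x$-independent coefficients; convexity of $\ent_{\nu_x(\cdot\tc\sigma)}$ together with the tower property then gives the desired term-by-term bound.

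The proof of the second inequality is symmetric, with the roles of spin and edge variables swapped. Set $k_{x-1}:=\nu[f\tc\sigma_O,A,\sigma_{E,>x}]$. The chain rule for the product $\nu(\cdot\tc\sigma_O,A)=\bigotimes_x\nu_x(\cdot\tc\sigma_O,A)$ on the spin variables (cf.\ \eqref{eo11}) yields
\[
\nu[\ent_\nu(f\tc\sigma_O,A)\tc\sigma_O]=\sum_x\nu[\ent_x(k_{x-1}\tc\sigma_O,A)\tc\sigma_O].
\]
Here $g_{x-1}=\nu[k_{x-1}\tc\sigma_O,\sigma_{E,\ge x},A_{\ge x}]$ is obtained from $k_{x-1}$ by averaging out $A_{<x}$, and a parallel factorization argument shows that the conditional distribution $\nu(A_{<x}\tc\sigma_O,\sigma_{E,\ge x},A_{\ge x})$ does not depend on $\sigma_x$. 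Since $\nu_x(\cdot\tc\sigma_O,A)$ depends on $A$ only through $A_x$, the same convexity-plus-tower argument produces the bound.

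The main technical hurdle is establishing these two conditional-independence statements; both follow from the explicit product structure of $\nu(\cdot\tc\sigma_O)$, but some careful bookkeeping is needed to isolate the factors that do not depend on the integrated variable. Once this is in place, the rest is a routine combination of convexity of entropy and the tower property.
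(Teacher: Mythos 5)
Your proof is correct and follows essentially the same approach as the paper's: both introduce a second auxiliary martingale-type chain (the paper's $\psi_{x-1}=\nu[f\mid\sigma,A_{\geq x}]$ and $h_{x-1}=\nu[f\mid\sigma_O,\sigma_{E,\geq x},A]$; your $h_{x-1}$ and $k_{x-1}$), establish a chain-rule telescoping for the corresponding conditional entropy, and reduce to a term-by-term comparison proved by single-site convexity of entropy — which is exactly what the paper's commutation identity $\nu_x^A g_{x-1}=\mu_{x-1}\nu_x^A h_{x-1}$ plus the variational principle \eqref{varprin} encode. One small correction: in part~2 your $k_{x-1}:=\nu[f\mid\sigma_O,A,\sigma_{E,>x}]$ should have $\sigma_{E,\geq x}$ in place of $\sigma_{E,>x}$, since otherwise $k_0\neq f$ and the telescoping decomposition of $\ent_\nu(f\mid\sigma_O,A)$ does not hold; with that fix the argument matches the paper's part~2 with $h_{x-1}$.
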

Before providing the proof of this lemma, we finish the proof of Lemma~\ref{lem:tensor}. 
Inequality~\eqref{swent22} together with parts 1 and 2 of Lemma~\ref{lemma:iterative:bounds}
show that
\begin{align}\label{swentop}
\d_1\,\ent_\nu (f\tc \si_O)
\leq  \nu\left[\ent_\nu (f\tc \si)\tc \si_O\right] + \nu\left[\ent_\nu (f\tc \si_O,A)\tc \si_O\right].
\end{align}
Taking expectations with respect to $\nu$ in \eqref{swentop} we arrive at \eqref{swent8a} and the proof is complete.
\end{proof}

We finish the proof of Lemma~\ref{lem:tensor} 
by providing the proof of Lemma~\ref{lemma:iterative:bounds}.

\begin{proof}[Proof of Lemma~\ref{lemma:iterative:bounds}]
	
We start with part 2. Let $h_x=\nu\left[f\tc \si_O,\si_{x+1},\dots,\si_{w},A\right]$, so that $h_0=f$ and $h_{w}=\nu\left[f\tc \si_O,A\right]$. Since $\nu(\cdot\tc \si_O,A)$ is a product measure, 
$\nu_x[h_{x-1}\tc \si_O,A]=h_{x}.$
Therefore, reasoning as in \eqref{swent21} we obtain
\begin{align}\label{swent25}
\ent_\nu(f\tc\si_O,A) = \sum_{x=1}^w
\nu\left[ \ent_x (h_{x-1}\tc\si_O,A)\tc \si_O,A\right].
\end{align}
Taking  
expectations with respect to $\nu(\cdot\tc\si_O)$ in \eqref{swent25} we see that it is
sufficient to show that, for all $x$,
\begin{align}\label{swent26}
\nu\left[ \ent_x (g_{x-1}\tc\si_O,A)\tc \si_O\right]\leq
\nu\left[ \ent_x (h_{x-1}\tc\si_O,A)\tc \si_O\right].
\end{align}
To prove \eqref{swent26}, we introduce the measures $\mu_k=\otimes_{x=1}^k\nu_x(\cdot\tc\si_O)$ and $\mu_k^A=\otimes_{x=1}^k\nu_x(\cdot\tc\si_O,A)$. Then we have $g_x = \mu_x[ f]$, $h_x= \mu_x^A [f]$, and $g_x=\mu_x [h_x]$.
Also, we simplify the notation by writing %$\nu_x(\cdot\tc\si_O)=\nu_x$ and 
$\nu_x(\cdot\tc\si_O,A)=:\nu_x^A$.
Now the product structure %property of $\nu(\cdot\tc\si_O)$ 
implies the commutation relation 
between expectations
\begin{align}\label{swent27}
\nu_x^A g_{x-1} %= \nu_x^A \mu_{x-1} f 
=    \nu_x^A \mu_{x-1} h_{x-1}%=    \mu_{x-1} \nu_x^A \mu_{x-1}^A f 
=  \mu_{x-1} \nu_x^A h_{x-1}.
\end{align}
Therefore,
\begin{align}
\nu\left[ \ent_x (g_{x-1}\tc\si_O,A)\tc \si_O\right]
&= 
\nu\left[g_{x-1}  \log\left(g_{x-1} /\nu_x^A g_{x-1}\right)\tc \si_O\right]
\notag\\&=
\nu\left[ \mu_{x-1} h_{x-1} \log\left(\mu_{x-1}h_{x-1} /\mu_{x-1}\nu_x^A h_{x-1}\right)\tc \si_O\right]
\notag\\&=
\nu\left[ h_{x-1} \log\left(\mu_{x-1}h_{x-1} /\mu_{x-1}\nu_x^A h_{x-1}\right)\tc \si_O\right]%\mu\left[\mu_{\L_{k-1}}f\log\left(\mu_{\L_{k-1}}f/\mu_{A_{k,j}}\mu_{\L_{k-1}}f\right)\right]
\notag\\ &=
\nu\left[\nu_x^A\left(h_{x-1} \log\left(g_{x-1} /\nu_x^A g_{x-1}\right)\right)\tc \si_O\right].\label{paris44}
\end{align}
%Now we recall the variational principle for entropy, valid for any probability measure $\nu$:
%\begin{equation}
%\label{varprin}
%\nu[f\varphi]\leq \nu[f\log(f/\nu [f] )]\,,
%\end{equation}
%for any function $f\geq 0$ and any function $\varphi$ such that $\nu(e^\varphi)\leq 1$ (see e.g.\ Proposition 2.2 in \cite{Ledoux}). In particular,
From the variational principle \eqref{varprin} it follows that
\begin{equation}
\label{varprin1}
\nu_x^A\left[h_{x-1} \log\left(g_{x-1} /\nu_x^A[g_{x-1}]\right)\right]\leq \ent_x (h_{x-1}\tc\si_O,A),
\end{equation}
which combined with \eqref{paris44} proves \eqref{swent26}. This completes the proof of part 2.

We use a similar argument for part 1. Let $\psi_x=\nu\left(f\tc \si,A_{x+1},\dots,A_{w}\right)$, so  
that $\psi_0=f$ and $\psi_{w}=\nu\left(f\tc \si\right)$. Notice that 
$\nu_x[\psi_{x-1}\tc \si]=\psi_{x}.$
Therefore, as in \eqref{swent21}, %we obtain
\begin{align*}%\label{swent251}
\ent_\nu(f\tc\si) = \sum_{x=1}^w
\nu\left[ \ent_x (\psi_{x-1}\tc\si)\tc \si\right].
\end{align*}
Taking 
expectations with respect to $\nu(\cdot\tc\si_O)$ we see that it is sufficient  to show that, for all $x\in E$,
\begin{align}\label{swent261}
\nu\left[ \ent_x (g_{x-1}\tc\si)\tc \si_O\right]\leq
\nu\left[ \ent_x (\psi_{x-1}\tc\si)\tc \si_O\right].
\end{align}
Introducing the measures $\mu_k=\otimes_{x=1}^k\nu_x(\cdot\tc\si_O)$,  $\mu_k^\si=\otimes_{x=1}^k\nu_x(\cdot\tc\si)$, and $\nu_x^\si=\nu_x(\cdot\tc\si)$, we have $g_x = \mu_x[f]$, $\psi_x= \mu_x^\si[f]$, and $g_x=\mu_x [\psi_x]$.
As in~\eqref{swent27}, we have the commutation relation 
\begin{align*}%\label{swent271}
\nu_x^\si g_{x-1} = \nu_x^\si \mu_{x-1} \psi_{x-1}
%\mu_{x-1}^\si f =   
% \mu_{x-1} \nu_x^\si \mu_{x-1}^\si f 
=  \mu_{x-1} \nu_x^\si \psi_{x-1}.
\end{align*}
Therefore, as in \eqref{paris44}-\eqref{varprin1} we obtain
\begin{align*}
%\label{paris441}
\nu\left[ \ent_x (g_{x-1}\tc\si)\tc \si_O\right]
&= 
\nu\left[g_{x-1}  \log\left(g_{x-1} /\nu_x^\si g_{x-1}\right)\tc \si_O\right]
\\&=
\nu\left[ \mu_{x-1} \psi_{x-1} \log\left(\mu_{x-1}\psi_{x-1} /\mu_{x-1}\nu_x^\si \psi_{x-1}\right)\tc \si_O\right]
\\&=
\nu\left[ \psi_{x-1} \log\left(\mu_{x-1}\psi_{x-1} /\mu_{x-1}\nu_x^\si \psi_{x-1}\right)\tc \si_O\right]%\mu\left[\mu_{\L_{k-1}}f\log\left(\mu_{\L_{k-1}}f/\mu_{A_{k,j}}\mu_{\L_{k-1}}f\right)\right]
\\ &=
\nu\left[\nu_x^\si\left(\psi_{x-1} \log\left(g_{x-1} /\nu_x^\si g_{x-1}\right)\right)\tc \si_O\right]
\\ &\leq \nu\left[\ent_x (\psi_{x-1}\tc\si)\tc \si_O\right].
\end{align*}
This proves~\eqref{swent261} and completes the proof of part 1.
\end{proof}

\section{A lower bound for the SW dynamics}
\label{sec:lower-bound}

In this section we establish an asymptotically tight lower bound for the mixing time of SW dynamics whenever SSM holds; this result implies the lower bound in Theorem~\ref{thm:intro:sw} from the introduction.
\begin{theorem}
	\label{thm:lb}
	In an $n$-vertex cube of $\bbZ^d$,
	for all integer $q \ge 2$ and all $\beta > 0$, SSM implies that for all boundary conditions $\tmix(SW) = \Omega(\log n)$.
\end{theorem}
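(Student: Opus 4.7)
The plan is to adapt the Hayes--Sinclair lower bound technique~\cite{HS}, originally designed for the local Glauber dynamics, to the non-local SW dynamics. Their strategy combines two ingredients: (i) an initial state that is far from equilibrium in the value of a carefully chosen distinguishing statistic $F$, with $\var_\mu F$ controlled by SSM; and (ii) a bounded-increment property along the Glauber trajectory, used via martingale concentration to bound the fluctuations of $F(\sigma^t)$ in $t$ steps. For SW, a single step can change the whole configuration, so the naive bounded-increment property fails. The main new ingredient will be a probabilistic \emph{speed-of-propagation} estimate for the SW dynamics under SSM that restores a suitably averaged analog of (ii).

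The heart of the proof will be the construction of a coupling $\hat\Prob$ of two runs of the SW dynamics started from arbitrary initial configurations $\sigma^0,\tau^0$, together with a quantitative spatial decay bound on the disagreement set $D_t:=\{v\in V:\sigma_v^t\ne\tau_v^t\}$. Under SSM I aim to prove
\[
\hat\Prob[v\in D_t]\le \sum_{u\in D_0} C\,e^{-a\,\dist(u,v)+\gamma t}
\]
for constants $a,\gamma,C>0$ depending only on $d,q,\beta$. The coupling will be defined one SW step at a time. In the percolation substep, the two random-cluster samples are built together by a joint edge-by-edge exploration starting from $D_t$, and the exponential decay of finite-volume random-cluster two-point connectivities of Duminil-Copin, Raoufi and Tassion~\cite{DRT} (valid throughout the high-temperature phase) gives the required exponential tail on the ``disagreement cluster'' produced in one step. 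In the recolouring substep I assign identical colours in both chains to every percolation cluster disjoint from the disagreement cluster. Iterating the one-step estimate yields the displayed bound.

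With this coupling in hand, I follow the Hayes--Sinclair template. Fix an initial configuration $\sigma^0$ far from equilibrium (for example a constant-spin configuration adapted to the boundary condition), and choose a distinguishing statistic $F$ (e.g.\ a weighted interior magnetization) such that $\var_\mu F$ is controlled by SSM and the ratio $|F(\sigma^0)-\mu[F]|^2/\var_\mu F$ is polynomial in $n$. Applying the coupling $\hat\Prob$ between the trajectory started at $\sigma^t$ and a stationary trajectory, the one-step disagreement bound will translate into both a controlled per-step contraction of the conditional mean $\mathbb{E}[F(\sigma^{t+1})\mid\sigma^t]$ and a per-step variance estimate $\var[F(\sigma^{t+1})\mid\sigma^t]$ that are uniformly bounded in $n$. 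A Chebyshev-type comparison against the stationary concentration $\sqrt{\var_\mu F}$ will then force $\tmix(SW)=\Omega(\log n)$.

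The main obstacle will be the speed-of-propagation estimate itself. The difficulty is that in one SW step the percolation substep can reveal arbitrarily large clusters, and the subsequent uniform recolouring can then globally propagate any disagreement to every vertex in those clusters; without control on cluster sizes the argument collapses. The DRT exponential decay of finite-volume connectivities is precisely the tool needed to tame this: it will ensure that, with high probability, the clusters reaching any given vertex in the SW percolation step have geometric tails, so that a disagreement spreads by only a bounded amount (in probability) per step, and the iterated bound above follows.
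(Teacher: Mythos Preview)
Your overall strategy---adapt Hayes--Sinclair via a speed-of-propagation bound for SW, with DRT supplying the exponential decay---matches the paper's approach. However, your central technical claim has a genuine gap.

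You assert that for \emph{arbitrary} initial pairs $(\sigma^0,\tau^0)$ the SW coupling satisfies $\hat\Prob[v\in D_t]\le \sum_{u\in D_0} C e^{-a\,\dist(u,v)+\gamma t}$, and you justify the one-step bound by invoking DRT decay of random-cluster connectivities. But the percolation substep of SW is \emph{not} a sample from the random-cluster measure: it is Bernoulli$(p)$ percolation restricted to the monochromatic edges of the \emph{current} configuration $\sigma^t$. DRT says nothing about this for a fixed $\sigma^t$; it only controls the edge marginal of $\nu$, i.e.\ the percolation averaged over $\sigma\sim\mu$. Concretely, take $d=2$, $q=2$, and $\beta$ with $\ln 2<\beta<\beta_c(2)$ (so SSM holds). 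If $\sigma^0$ is the constant-spin configuration you suggest, every edge is monochromatic and the first percolation step is Bernoulli$(p)$ on the full edge set with $p>1/2=p_c^{\mathrm{Ber}}(\bbZ^2)$: this is supercritical, there is a macroscopic cluster with high probability, and a single disagreement can spread across the whole box in one step. Your displayed inequality is simply false in this regime.

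The paper repairs this by abandoning arbitrary initial states. It introduces the notion of an \emph{$L$-shattered} configuration (one for which the SW percolation step has all clusters of radius $<L$ w.h.p.), uses DRT only to show that the stationary measure $\mu^1$ concentrates on shattered configurations (Lemma~\ref{lemma:sht:stat}), and then starts the chain from a configuration sampled proportionally to $\mu^1$ on shattered configurations inside an array of mesoscopic boxes $B_i$. The crucial additional ingredient you are missing is Lemma~\ref{lem:HS}, a discrete-time CMD-type bound for reversible positive-semidefinite chains: it guarantees that a chain started from the conditional stationary distribution on a high-probability event $B$ remains in $B$ with high probability at all later times. This is what keeps $Y_t$ shattered for $t\le A\log n$ and makes the one-step propagation bound legitimate. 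The coupling is also structurally different from yours: rather than coupling two SW chains, the paper couples the true SW chain $\{X_t\}$ to a ``frozen-boundary'' chain $\{Y_t\}$ that only updates inside the boxes, so that $\{Y_t\}$ is a product of independent SW dynamics on the $B_i$'s. The distinguishing statistic is then the number of boxes whose center edge is monochromatic, and independence across boxes allows Hoeffding rather than the variance/Chebyshev route you sketch. Your proposed magnetization statistic and coupling-to-stationarity would, even if the propagation bound were available, require substantially more work to control the per-step conditional variance uniformly in $n$.
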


The main new ingredient in the proof of this result is a bound on the speed of propagation of disagreements under a coupling of the steps of the SW dynamics provided SSM holds.
With this new tool, we are able to adapt the lower bound framework of Hayes and Sinclair~\cite{HS}
for the Glauber dynamics to the SW setting.  
We also use a recently established fact about concentration 
properties of the Potts measure due to~\cite{DRT}.
%\alistair{Added this sentence}

\bigskip\noindent\textbf{SW coupling.} \ Consider two copies of the SW dynamics on the graph $G=(V,\EdgeSet)$, where $V$ is an $n$-vertex cube of $\bbZ^d$.
Let ${X_t}$ and ${Y_t}$ be the configurations of these copies at time $t \ge 0$.
We can couple the steps of the SW dynamics as follows:
\begin{enumerate}
	\item Draw 
	$|\EdgeSet|$ independent, uniform random numbers from $[0,1]$, one for each edge. Let $r_e(t) \in [0,1]$ denote the random number corresponding to the edge $e \in \EdgeSet$.
	\item Draw $|V|$ independent, uniform random numbers from $\{1,...,q\}$, one for each vertex. Let $s_v(t) \in \{1,...,q\}$ denote the random number for $v \in V$.
	\item Let $A_X = \{e \in M(X_t): r_e(t) \le p\}$ and $A_Y = \{e \in M(Y_t): r_e(t) \le p\}$,
	where recall that $M(X_t)$ and $M(Y_t)$ denote the set of monochromatic edges in $X_t$ and $Y_t$, respectively 
	%	\item For each edge $e \in \EdgeSet$, if $e$ is monochromatic in $X_t$, it is added to a joint configuration $(X_t,A_X)$ if $r_e(t) \le p$. Likewise, if $e$ is monochromatic in $Y_t$, it is added to $(Y_t,A_Y)$ when $r_e(t)  \le p$. (Note that $A_X \subseteq M(X_t)$ and $A_Y \subseteq M(Y_t)$, where $M(\cdot)$ is the set of monochromatic edges.)
	\item For each connected component $\mathcal C$ of $(V,A_X)$ or $(V,A_Y)$,
	we let $s_{\mathcal C} = s_v(t)$, where $v$ is the vertex in $\mathcal C$ with the smallest coordinate sum. (If two or more vertices in $\mathcal C$ have the same coordinate sum, we break ties ``lexicographically'' using the coordinates.) Then, every vertex of $\mathcal C$ is assigned the spin $s_{\mathcal C} $.
\end{enumerate}

The key property of the SW coupling is that, after assigning the edges, two identical connected components in $A_X$ and $A_Y$ will be assigned the same spin (namely, the spin~$s_v$ of their common vertex~$v$
with smallest coordinate sum). 
We show that, under SSM, the SW coupling propagates disagreements slowly for a suitable starting condition. 

To describe our starting condition we introduce the notion of \emph{$L$-shattered} configurations.
\begin{definition}
	Consider the graph $G=(V,\EdgeSet)$, where $V$ is an $n$-vertex cube of $\bbZ^d$.
	For a configuration $\sigma$ 
	on $V$, let $A_\sigma \subseteq M(\sigma)$
	be the configuration that results from keeping each
	monochromatic edge in $M(\sigma)$ independently 
	with probability $p = 1 - \exp(-\beta)$. 
	We say that $\sigma$ is \emph{$L$-shattered} in $V$
	if,
	with probability at least $1-|V|\exp(-\gamma L)$ where $\gamma > 0$ is a fixed constant we choose later,
	for every $v \in |V|$ at distance at least 
	$2L$ from the boundary of $|V|$, 
	the connected component of $v$ in $A_\sigma$ 
	does not reach the boundary of the cube $\Lambda_v(L)$ centered at $v$ of side length $L$.
	%(A color component in $\sigma$ is a maximal set of connected vertices with the same spin in $\sigma$.)	
\end{definition}
Note that the above defined notion involves %probability that the component of any vertex at a distance at least $2L$ from
%the boundary reaches the boundary of $V$ 
a probability that decays exponentially with $L$, so
the dimension of the cube $V$ will not be as significant as long as $\log n \ll L\ll n$. 
The following lemma establishes a concentration of the probability mass on shattered configurations under SSM (for the monochromatic ``all 1'' boundary condition).

\begin{lemma}
	\label{lemma:sht:stat}
	Let $\mathfrak S$ be the set of $L$-shattered configurations 
	of the $n$-vertex cube $G=(V,\EdgeSet)$ of $\bbZ^d$.
	There exists a constant $c > 0$ such that
	for all integers $q \ge 2$ and $L \ge 1$, SSM implies that 
	$\mu^1 (\mathfrak S) \ge 1 - \exp(-cL).$
\end{lemma}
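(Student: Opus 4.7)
The plan is a one-line application of Markov's inequality combined with exponential decay of connectivity in the random-cluster model. For a spin configuration $\sigma$, set
\[
F(\sigma) := \Pr_{A_\sigma}\!\Big[\,\exists\, v \in V \text{ with } \dist(v,\partial V) \ge 2L \text{ whose cluster in } A_\sigma \text{ meets } \partial \Lambda_v(L)\,\Big].
\]
By the very definition of $L$-shattered, $\sigma \notin \mathfrak S$ precisely when $F(\sigma) > |V|\exp(-\gamma L)$, and hence Markov's inequality gives
\[
\mu^1(\mathfrak S^c) \;\le\; \frac{\mu^1[F]}{|V|\exp(-\gamma L)}.
\]
So the entire argument reduces to bounding $\mu^1[F]$.

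The key observation is that, by construction of $A_\sigma$ together with the Edwards--Sokal coupling, the joint law of $(\sigma,A_\sigma)$ when $\sigma\sim\mu^1$ is exactly $\nu^1$, the joint measure on $(V,\EdgeSet)$ with ``all $1$'' spin boundary. In particular, $\mu^1[F]$ is a single probability computed under $\nu^1$, whose edge marginal is the random-cluster measure on $V$ with wired boundary and parameters $(p,q)$. I would then invoke the standard fact that SSM for the Potts model implies exponential decay of connectivity in the associated random-cluster measure: there exists $\alpha=\alpha(d,q,\beta)>0$ such that for every $v\in V$,
\[
\Pr_{\nu^1}\!\bigl[\,v \leftrightarrow \partial \Lambda_v(L) \text{ in } A\,\bigr] \;\le\; \exp(-\alpha L).
\]
The transfer from the spin-side correlation bound to the edge-side connectivity bound proceeds via the Edwards--Sokal two-point identity $\Pr[\sigma_v=\sigma_w]=\tfrac{1}{q}+\bigl(1-\tfrac{1}{q}\bigr)\Pr_{\mathrm{RC}}[v\leftrightarrow w]$, together with the monotonicity and finite-energy properties of the random-cluster measure; this translation is by now well established, see e.g.\ \cite{BCSV} and the references therein.

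Combining the two inputs by a union bound over the at most $|V|$ interior vertices yields $\mu^1[F] \le |V|\exp(-\alpha L)$, and substituting into the Markov estimate gives
\[
\mu^1(\mathfrak S^c) \;\le\; \exp\!\bigl(-(\alpha-\gamma)L\bigr).
\]
Choosing the parameter $\gamma$ (which we are free to fix once and for all in the definition of ``shattered'') to be any number strictly less than $\alpha$, say $\gamma=\alpha/2$, yields the lemma with $c=\alpha/2$.

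The only non-routine step is the passage from SSM for the Potts model to exponential decay of random-cluster connectivity under the wired boundary. This is the main conceptual point, but I do not anticipate real difficulty: it is a standard corollary of the Edwards--Sokal coupling, and the boundary bookkeeping (``all $1$'' spin boundary on the Potts side corresponds to wired boundary on the random-cluster side) is precisely what makes the connectivity bound uniform in $v\in V$. Everything else is a union bound and one application of Markov's inequality.
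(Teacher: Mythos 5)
Your argument has the same skeleton as the paper's: regard the shattering-failure probability $F(\sigma)$ as a function of $\sigma$, identify $\mu^1[F]$ with a one-arm event under the joint measure $\nu^{(1,1)}$, bound that by exponential decay of connectivity in the wired random-cluster marginal plus a union bound over interior vertices, and close with Markov's inequality. The paper's write-up phrases the last step not as Markov but as a two-case decomposition of $\nu^{(1,1)}(\mathcal{E}_L)$ over $\mathfrak S$ and $\mathfrak S^c$ and then rearranges; these are the same computation.

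The one point of genuine divergence is the justification for the connectivity-decay input, and this is where I would push back. The paper does \emph{not} derive the wired one-arm estimate from SSM via Edwards--Sokal at all: it cites Theorem~1.2 of Duminil-Copin--Raoufi--Tassion~\cite{DRT} directly, and even remarks that this needs a hypothesis strictly weaker than SSM. Your proposed route --- two-point Edwards--Sokal identity plus ``monotonicity and finite energy'' --- most naturally yields decay of the \emph{free}-boundary two-point connectivity from decay of Potts correlations; upgrading that to a one-arm bound under the \emph{wired} random-cluster measure (which is what the shattering definition actually needs, since the joint boundary condition wires $\partial V$) is not a routine corollary and is, in essence, the content of the sharpness result in~\cite{DRT}. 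Calling it ``by now well established'' understates how much is hiding there; if you have a precise reference for the wired one-arm estimate under SSM (pieces of this do appear in~\cite{BCSV}), the rest of the proof goes through, but it would be cleaner and safer to simply invoke~\cite{DRT} as the paper does.

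One small bookkeeping point you should make explicit: the joint law of $(\sigma,A_\sigma)$ with $\sigma\sim\mu^1$ and $A_\sigma$ the SW percolation step agrees with $\nu^{(1,1)}$ on the event in question only because the cubes $\Lambda_v(L)$ are at distance $\ge 2L$ from $\partial V$, so whether the boundary edges of $\partial\bbE$ are wired or merely free-but-monochromatic does not affect the one-arm event inside $\Lambda_v(L)$.
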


The proof of this lemma, which follows straightforwardly from the results in~\cite{DRT}, 
 will be provided later in Section~\ref{subsec:aux:lb}.
We can now describe our starting condition for the SW dynamics.

\bigskip\noindent\textbf{A starting condition.} \
We consider a regular pattern of non-overlapping $d$-dimensional cubes of side
length $\ell = (\log n)^{3}$ with a fixed minimal distance between cubes.
Formally, consider the cubes of side length $\ell$ centered at $(\ell+3) \cdot \vec{h}$ where $\vec{h} \in \bbZ^d$. These cubes have volume $\ell^d$ and are at distance $4$ from each other. We let $B_1,B_2,\dots B_N \subset V$ be the collection of those cubes that are contained in $V$ and at distance at least $4$ from the boundary $\partial V$; then, $N = \Theta({n}/{\ell^d})$.

%We define next the initial state $X_0$ of the chain; we introduce the necessary notation first.
Let $B = \bigcup_{i=1}^N B_i$, $\partial B = \bigcup_{i=1}^N \partial B_i$
and let $e_i$ be an edge at the center of $B_i$.
For definiteness, we may assume that $\ell$ is odd so that there is a unique vertex $v_i$ at the center of each $B_i$; we take $e_i = \{v_i,u_i\}$ where 
$u_i = v_i + (1,0,\dots,0) \in \bbZ^d$.
%It suffices to pick any edge whose endpoints are at a distance of $\Theta(n^{1/d})$ from the boundary $\partial V$.)
Let $\mathcal A_i$ be the set of configurations on $B_i$
in which the spins at the endpoints of $e_i$ are the same,
and let $\mathcal S_i$ be the set of $L$-shattered configurations in each $B_i$.
(Later we will set $L = C (\log n)$ with $C > 0$ a large constant.)	

We consider two variants of the SW dynamics, $\{X_t\}$ and $\{Y_t\}$, with the same initial condition $X_0=Y_0$.
The chain $\{X_t\}$ is an instance of the standard SW dynamics on $(V,\EdgeSet)$;
for the initial state $X_0$ of $\{X_t\}$ we set
the spins of all the vertices in $U = (V \setminus B) \cup \partial B$ to $1$.
The configuration 
in each cube $B_i$ is sampled (independently) proportional to $\mu_{B_i}^1$ on $\mathcal S_i \cap \mathcal A_i$, where $\mu_{B_i}^1$ denotes 
the Potts measure on $B_i$ with the ``all $1$'' monochromatic boundary condition.

The other instance we consider, $\{Y_t\}$, only updates the spins of the vertices in $B\setminus \partial B$. 
That is, after adding all the monochromatic edges independently with probability $p=1-\exp(-\beta)$, only the connected components fully contained in $B$ update their spins.
(Note that if a component touches the boundary of $B$, then it is not updated since the boundary is frozen to the spin $1$ by the boundary condition.)   
We set $Y_0 = X_0$
and couple the evolution of $Y_t$ and $X_t$ using the SW coupling defined earlier.
We can view $\{Y_t\}$ as a dynamics on the configurations on $B$ whose stationary measure is $\mu_B^1 = \otimes_{i=1}^N \mu_{B_i}^1$.
We also observe that a step of $\{Y_t\}$ is equivalent to performing one step of the SW dynamics in each $B_i$ independently.

%\alistair{Added this paragraph: plse check!!}
Note that $X_0=Y_0$, and any disagreements between $X_t,Y_t$ at later times~$t$
can arise only from the fact that $Y_t$ does not update the spins outside~$B$: i.e., 
disagreements must propagate into the~$B_i$ from their boundaries.  
The following result, whose proof we defer until after the proof of 
Theorem~\ref{thm:lb}, provides a bound on the speed of propagation of these disagreements
under the SW coupling with the specified initial condition.
In particular it says that, for $t=\Omega(\log n)$ steps, $X_t,Y_t$ agree w.h.p.\ on the spins
at the center of every cube~$B_i$.

\begin{theorem}
	\label{thm:dp}
	Let $\mathcal C = \bigcup_{i=1}^N e_i$
	and set $L = C (\log n)$.
	For any constant $A > 0$, for a sufficiently large constant $C > 0$
	SSM implies that 
	$$
	\Pr\left[\forall t \le A \log n:~X_t(\mathcal C) = Y_t(\mathcal C)\right] = 1-o(1).
	$$
	%\eric{Should the equality in the conclusion here be $\geq$?}
\end{theorem}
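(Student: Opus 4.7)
The plan is to quantify the speed at which disagreements between $X_t$ and $Y_t$ propagate inward from $\partial B$ under the SW coupling. Define the agreement region $R_t := \{v \in B : X_t(v) = Y_t(v)\}$; since $X_0 = Y_0$, we have $R_0 = B$. The key propagation lemma is the following: if every component of the bond percolation $A_Y^{(t)}$ has diameter at most $L$, then for every $v \in R_{t-1}$ satisfying $\dist(v, V \setminus R_{t-1}) > L+1$ and $\dist(v, \partial B) > L$, we have $v \in R_t$. Indeed, the $A_Y^{(t)}$-component $K_Y$ of $v$ lies within the radius-$L$ ball around $v$, which is contained in $R_{t-1}$; since $X_{t-1}=Y_{t-1}$ on $R_{t-1}$, the coupled random numbers $\{r_e\}$ put exactly the same edges into $A_X^{(t)}$ and $A_Y^{(t)}$ for edges with both endpoints in $R_{t-1}$. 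Any vertex of the $A_X^{(t)}$-component $K_X$ lying outside the radius-$L$ ball would require an $A_X^{(t)}$-edge from some $u \in K_Y$ (at distance $\le L$ from $v$) to a vertex $w$ at distance $L+1$ from $v$; but $w \in R_{t-1}$ by hypothesis, so this edge would also belong to $A_Y^{(t)}$, contradicting the containment of $K_Y$ in the ball. Hence $K_X = K_Y$, and since $K_Y$ does not touch $\partial B$, both chains update this cluster to the same spin $s_{\min K_Y}$, giving $X_t(v)=Y_t(v)$. Crucially, this argument does not require a separate shattering property for $X_t$.

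\textbf{Shattering of $Y$.} The chain $Y_t$ factors into $N$ independent SW chains $Y_t^{(i)}$ on the blocks $B_i$, each reversible with respect to $\mu_{B_i}^1$. The initial distribution $\mu_{B_i}^1(\cdot \mid \cS_i \cap \cA_i)$ has density with respect to $\mu_{B_i}^1$ bounded by $K := 1/\mu_{B_i}^1(\cS_i \cap \cA_i)$, and this bound is preserved by the dynamics: if $\rho_0 \le K\mu$ pointwise then $\rho_0 P^t \le K\mu$ pointwise, since $\mu$ is stationary and $K\mu - \rho_0$ remains a nonnegative measure under the action of $P^t$. Lemma~\ref{lemma:sht:stat} gives $\mu_{B_i}^1(\cS_i) \ge 1 - e^{-cL}$, while $\mu_{B_i}^1(\cA_i) \ge 1/q$ by the $q$-fold spin symmetry of the Potts model, so $K = O(1)$. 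We therefore have $\Prob[Y_t^{(i)} \notin \cS_i] \le K\, e^{-cL}$; a union bound over $i \le N = O(n)$ and $t \le A\log n$ shows that, with probability $1-o(1)$, $Y_t$ is $L$-shattered for every such $t$, and hence each bond configuration $A_Y^{(t+1)}$ has only components of diameter $\le L$ (the conditional failure probability within the definition of shattering is absorbed), provided $L = C\log n$ with $C$ sufficiently large relative to $A$.

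\textbf{Iteration and main obstacle.} Iterating the propagation lemma on the good event above yields by induction $R_t \supseteq \{v \in B : \dist(v, \partial B) > t(L+2)\}$. For $t \le A\log n$ this threshold is $O((\log n)^2)$, which is negligible compared to $\dist(v_i, \partial B) \sim \ell/2 = (\log n)^3/2$; hence the endpoints of every center edge $e_i$ lie in $R_t$, so $X_t(\cC) = Y_t(\cC)$ throughout the time horizon. The main conceptual hurdle is the propagation lemma itself: it bypasses the need to prove shattering of $X_t$ (which is not at all obvious since $X_t$ need not be close to any stationary distribution, being driven by the fluctuating effective boundary condition created by $X$'s updates outside $B$) by exploiting only $Y$'s shattering together with the coupling; any attempted $X$-component escape from the radius-$L$ ball around $v$ would need to cross an edge whose far endpoint still lies in $R_{t-1}$, which forces that edge to be present in $A_Y^{(t)}$ as well and so contradicts the $Y$-shattering hypothesis.
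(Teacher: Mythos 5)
Your argument is correct, and the overall strategy (inductive propagation of disagreements inward from $\partial B$, with speed bounded by the component diameters of $Y$'s shattered percolation) is the same as the paper's. The one genuine difference is how you show $Y_t$ stays $L$-shattered for the entire time horizon. The paper invokes Lemma~\ref{lem:HS}, a discrete-time CMD/spectral inequality that requires the transition matrix to be positive semidefinite, and then performs an extra conditioning step (equation~\eqref{eq:at}) to pass from the initial law $\hat\pi_i\propto \mu_{B_i}^1$ on $\cS_i$ to the actual initial law $\pi_i\propto\mu_{B_i}^1$ on $\cS_i\cap\cA_i$. You replace all of this with the elementary observation that a pointwise density bound $\rho_0\le K\mu$ is preserved by any Markov kernel stationary for $\mu$, which gives $\Pr[Y_t^{(i)}\notin\cS_i]\le K\,\mu_{B_i}^1(\cS_i^c)=O(e^{-cL})$ directly; this needs only stationarity, not reversibility or positive semidefiniteness, and handles the $\cA_i$-conditioning automatically since $K=1/\mu_{B_i}^1(\cS_i\cap\cA_i)=O(1)$. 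That is a cleaner route to the same shattering-persistence estimate. Two small quibbles that do not affect correctness: the justification ``by $q$-fold spin symmetry'' for $\mu_{B_i}^1(\cA_i)\ge 1/q$ is imprecise under the monochromatic boundary (the symmetry is broken); the standard way to see $\ge 1/q$ is via the Edwards--Sokal coupling, $\Pr(\sigma_u=\sigma_v)=\Pr(u\leftrightarrow v)+(1-\Pr(u\leftrightarrow v))/q\ge 1/q$. Also, your ``diameter $\le L$'' reading of the shattering definition differs from the paper's ``does not reach the boundary of the side-$L$ cube $\Lambda_v(L)$'' by constant factors, which would need to be tracked in a fully formal write-up but is harmless here.
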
 

A key ingredient in the proof of Theorem~\ref{thm:dp} (and also of Theorem~\ref{thm:lb}) is the following discrete time version of the completely monotone decreasing (CMD) property of reversible Markov chains from~\cite{HS}; the proof of this lemma is provided in Section~\ref{subsec:aux:lb}.

\begin{lemma}\label{lem:HS}
	Let $\{X_t\}$ denote a discrete time Markov chain with finite state space $\Omega$, reversible with respect to $\pi$ and with a positive semidefinite transition matrix. 
	Let $B\subset \Omega$ denote an event. 
	If $X_0$ is sampled proportional to $\pi$ on $B$, then 	$\Pr(X_t\in B)\geq \pi(B)$ for all $t \ge 0$, and for all $t \ge 1$ 
	\begin{equation*}\label{eq:bo1}
	\Pr(X_t\in B)\geq \pi(B) + (1-\pi(B))^{-t+1}(\Pr(X_1\in B)-\pi(B))^{t}.
	\end{equation*}
\end{lemma}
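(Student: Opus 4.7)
The plan is to reduce both statements to a spectral calculation on the mean-zero function $g := \mathbf{1}_B - \pi(B)\mathbf{1}$, exploiting reversibility and the PSD hypothesis.

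First, since $X_0$ has density $f = \mathbf{1}_B/\pi(B)$ with respect to $\pi$ and $P$ is reversible, the density of $X_t$ with respect to $\pi$ equals $P^t f$, so $\Pr(X_t \in B) = \pi(B)^{-1} \langle \mathbf{1}_B, P^t \mathbf{1}_B\rangle_\pi$. Writing $\mathbf{1}_B = g + \pi(B)\mathbf{1}$ and using $P^t\mathbf{1}=\mathbf{1}$ together with $\pi[g]=0$, this collapses to
\[
\Pr(X_t \in B) = \pi(B) + \frac{a_t}{\pi(B)}, \qquad a_t := \langle g, P^t g\rangle_\pi,
\]
and a short computation gives $a_0 = \pi(B)(1-\pi(B))$ and $a_1 = \pi(B)(\Pr(X_1 \in B)-\pi(B))$.

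Reversibility makes $P$ self-adjoint in $L^2(\pi)$, and the PSD hypothesis places its spectrum in $[0,1]$. Diagonalizing $P$ in a $\pi$-orthonormal basis yields a positive measure $\nu_g$ on $[0,1]$ with $a_t = \int_0^1 \lambda^t \, d\nu_g(\lambda)$. In particular $a_t \geq 0$ for every $t \geq 1$, which gives $\Pr(X_t \in B) \geq \pi(B)$; the $t=0$ case is trivial since $X_0\in B$ almost surely, so this establishes the first assertion.

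For the sharpened bound, I would treat $\nu_g/a_0$ as a probability measure on $[0,1]$ (the $a_0=0$ case being degenerate, as then $B$ has $\pi$-measure $0$ or $1$) and apply Jensen's inequality to the convex function $\lambda\mapsto\lambda^t$ with $t\geq 1$:
\[
\frac{a_t}{a_0} = \int \lambda^t \, \frac{d\nu_g}{a_0} \;\geq\; \left(\int \lambda \, \frac{d\nu_g}{a_0}\right)^t = \left(\frac{a_1}{a_0}\right)^t,
\]
so $a_t \geq a_0^{1-t} a_1^t$. Substituting the explicit values of $a_0$ and $a_1$ into $\Pr(X_t \in B) = \pi(B) + a_t/\pi(B)$, the factors of $\pi(B)$ telescope and leave exactly $(1-\pi(B))^{1-t}(\Pr(X_1\in B)-\pi(B))^t$ added to $\pi(B)$, which is the stated inequality. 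There is no substantive obstacle beyond careful bookkeeping: the PSD hypothesis is the single feature that forces $\nu_g$ onto the nonnegative axis, simultaneously powering both $a_t \geq 0$ and the log-convex moment inequality $a_t a_0^{t-1} \geq a_1^t$, so without it neither conclusion would survive.
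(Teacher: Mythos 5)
Your proof is correct and follows essentially the same route as the paper's: both diagonalize the reversible PSD transition matrix, interpret the resulting spectral weights as a probability measure on the nonnegative eigenvalues, and apply Jensen's inequality to the convex map $\lambda \mapsto \lambda^t$ to compare $a_t$ with $a_1^t/a_0^{t-1}$. Your $\nu_g/a_0$ is precisely the paper's probability vector $(\kappa_\ell)$, so the two arguments differ only in notation and bookkeeping.
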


We now proceed with the proof of Theorem~\ref{thm:lb}.

\begin{proof}[Proof of Theorem~\ref{thm:lb}]

	Our goal is to show that at some time $T = \Theta(\log n)$
	$$
	{\|X_T - \mu\|}_\textsc{tv} > \frac 12,
	$$
	where with a slight abuse of notation we use $X_T$ for the distribution of the chain $\{X_t\}$ at time $T$.
	This clearly implies that the mixing time of the SW dynamics is $\Omega(\log n)$.
	
	Let $\mathcal C = \bigcup e_i$ and let $e_i = \{a_i,b_i\}$. 
	%Let $X_T(\mathcal C)$ denote the distribution of $X_T$ on $\mathcal C$, and 
	Let $\hat\mu_{\mathcal C}$ and $\hat\mu_{\mathcal C}^1$ be the marginals of $\mu$ and $\mu^{1}_B$, respectively, on $\mathcal C$. 
	Then,
	\begin{align}
	{\|X_T - \mu\|}_\textsc{tv} 
	&\ge {\|X_T(\mathcal C) - \hat\mu_{\mathcal C}\|}_\textsc{tv} \notag\\
	&\ge {\|Y_T(\mathcal C) - \hat\mu_{\mathcal C}\|}_\textsc{tv} - {\|X_T(\mathcal C) - Y_T(\mathcal C)\|}_\textsc{tv} \notag\\
	&\ge {\|Y_T(\mathcal C) - \hat\mu_{\mathcal C}^1\|}_\textsc{tv} - {\|\hat\mu_{\mathcal C}^1- \hat\mu_{\mathcal C}\|}_\textsc{tv} -
	{\|X_T(\mathcal C) - Y_T(\mathcal C)\|}_\textsc{tv}. \label{eq:main-trig}
	\end{align}
	We bound each term of~\eqref{eq:main-trig} independently.
	We note first that by Theorem~\ref{thm:dp}
	\begin{align*}
	{\|X_T(\mathcal C) - Y_T(\mathcal C)\|}_\textsc{tv} 
	%&\le \bbP[X_T(B) \neq Y_T ] \\
	\le \Pr(X_T(\mathcal C) \neq Y_T(\mathcal C)) 
	%& \le \sum_{i=1}^N \bbP[X_T(e_i) \neq Y_T(e_i)]\\
	%& \le \sum_{i=1}^N \bbP[X_T(a_i) \neq Y_T(a_i)] + \bbP[X_T(b_i) \neq Y_T(b_i)]\\
	& = o(1).
	\end{align*}
	%by taking the constant $C$ in the definition of $L$ large enough. 
	%	We show first that ${\|X_T(\mathcal C) - Y_T(\mathcal C)\|}_\textsc{tv} = o(1)$. 
	%	We use $\bbP$ 
	%	for the SW coupling (defined above). Then,
	%	\begin{align*}
	%	{\|X_T(\mathcal C) - Y_T(\mathcal C)\|}_\textsc{tv} 
	%	%&\le \bbP[X_T(B) \neq Y_T ] \\
	%	\le \bbP[X_T(\mathcal C) \neq Y_T(\mathcal C)] 
	%	& \le \sum_{i=1}^N \bbP[X_T(e_i) \neq Y_T(e_i)]\\
	%	& \le \sum_{i=1}^N \bbP[X_T(a_i) \neq Y_T(a_i)] + \bbP[X_T(b_i) \neq Y_T(b_i)]\\
	%	& \le \frac{2N (\log n)^{3d}}{n} = o(1),
	%	\end{align*}
	%	where the last inequality follows Corollary~\ref{cor:dp} applied to each cube $B_i$ of volume $(\log n)^{3d}$, by taking the constant $C$ in the definition of $L$ large enough. 
	
	We proceed to bound the term ${\|\hat\mu_{\mathcal C}^1- \hat\mu_{\mathcal C}\|}_\textsc{tv}$ in~\eqref{eq:main-trig}, for which we use SSM.
	Let $\Omega(A)$ be the set of all possible configurations on the set $A \subseteq V$.
	For a configuration $\psi$ on $U$, let $\hat\mu_{\mathcal C}^\psi$ denote the marginal of $\mu_B^\psi$ on $\mathcal C$.
	Let $\hat\mu_{e_i}^1$, $\hat\mu_{e_i}^\psi$ be the marginals of $\hat\mu_{B_i}^1$, $\hat\mu_{B_i}^\psi$ on $e_i$, respectively. Then,
	\begin{align*}
	{\|\hat\mu_{\mathcal C}^1- \hat\mu_{\mathcal C}\|}_\textsc{tv} 
	%&= \frac{1}{2} \sum_{\sigma \in \Omega(\mathcal C)} |\hat\mu_{\mathcal C}^1(\sigma)- \hat\mu_{\mathcal C}(\sigma)| 
	%= \frac{1}{2} \sum_{\sigma \in \Omega(\mathcal C)} \left|\hat\mu_{\mathcal C}^1(\sigma) - \sum_{\psi \in \Omega(U)}  \hat\mu_{\mathcal C}^\psi(\sigma)\mu(\psi)\right| \\
	%&= \frac{1}{2} \sum_{\sigma \in \Omega(\mathcal C)} \left|\sum_{\psi \in \Omega(U)} (\hat\mu_{\mathcal C}^1(\sigma)- \hat\mu_{\mathcal C}^\psi(\sigma))\mu(\psi)\right| 
	%\le \frac{1}{2} \sum_{\sigma \in \Omega(\mathcal C)} \sum_{\psi \in \Omega(U)} \left|\hat\mu_{\mathcal C}^1(\sigma)- \hat\mu_{\mathcal C}^\psi(\sigma)\right|\mu(\psi) \\
	&\le \sum_{\psi \in \Omega(U)} \mu(\psi) 	{\|\hat\mu_{\mathcal C}^1- \hat\mu_{\mathcal C}^\psi\|}_\textsc{tv}  
	\le  \sum_{\psi \in \Omega(U)} \sum_{i=1}^N \mu(\psi) {\|\hat\mu_{e_i}^1- \hat\mu_{e_i}^\psi\|}_\textsc{tv} 
	\le \frac{N}{e^{\kappa\ell}} = o(1),
	\end{align*}
	where the second inequality follows from the fact that $\mu_B^1$ and $\mu_B^\psi$ are product measures over the $B_i$'s, and the last one follows from the SSM property for a suitable constant $\kappa > 0$.
	
	It remains for us to provide a lower bound for the term ${\|Y_T(\mathcal C) - \hat\mu_{\mathcal C}^1\|}_\textsc{tv}$ in~\eqref{eq:main-trig}. For this, we introduce an auxiliary copy of the chain $\{Y_t\}$, 
	denoted $\{Z_t\}$, which is coupled with~$\{Y_t\}$ but with a slightly different starting condition.
	Namely, $Z_0$ is sampled proportional to $\mu^1_{B_i}$ on the set $\mathcal A_i$, independently for each $B_i$. 
	(Recall that $Y_0=X_0$ is sampled proportional to $\mu^1_{B_i}$ on $\mathcal S_i \cap \mathcal A_i$ instead.)
	Then,
	\begin{equation}
	\label{eq:second:traig}
	{\|Y_T(\mathcal C) - \hat\mu_{\mathcal C}^1\|}_\textsc{tv} \ge {\|Z_T(\mathcal C) - \hat\mu_{\mathcal C}^1\|}_\textsc{tv} - {\|Y_T(\mathcal C) -Z_T(\mathcal C)\|}_\textsc{tv}.
	\end{equation}
	We first provide an upper bound for the second term in~\eqref{eq:second:traig}.  Plainly,
	$$
	{\|Y_T(\mathcal C) -Z_T(\mathcal C)\|}_\textsc{tv} \le {\|Y_T -Z_T\|}_\textsc{tv} \le  \Pr[Y_0 \neq Z_0].
	$$
	%\alistair{Is $Z_t$ also coupled?  How is inequality above justified?}
	Let $\mu_0^Y$, $\mu_0^Z$ be the initial distribution for $\{Y_t\}$ and $\{Z_t\}$, respectively, and let $\mathcal S = \otimes \mathcal S_i$ and $\mathcal A = \otimes \mathcal A_i$. For $\sigma \in \mathcal S \cap \mathcal A$, we have
	$\mu_0^Y(\sigma) = \mu^1_B(\sigma)/ \mu^1_B(\mathcal S \cap \mathcal A)$, and
	for $\sigma \in \mathcal A$,
	$\mu_0^Z(\sigma) = \mu^1_B(\sigma)/ \mu^1_B(\mathcal A)$.
	Therefore, 
	if the configurations $Y_0$ and $Z_0$ are sampled from the optimal coupling
	between $\mu_0^Y$, $\mu_0^Z$ and the steps of $\{Y_t\}$, $\{Z_t\}$ are then coupled with the SW coupling, we have 
	$$
	{\|Y_T(\mathcal C) -Z_T(\mathcal C)\|}_\textsc{tv} \le {\|\mu_0^Y -\mu_0^Z\|}_\textsc{tv}  \le \sum_{i=1}^N {\|\mu_0^{Y,i} -\mu_0^{Z,i}\|}_\textsc{tv} = N {\|\mu_0^{Y,1} -\mu_0^{Z,1}\|}_\textsc{tv},
	$$
	where $\mu_0^{Y,i}$, $\mu_0^{Z,i}$ are the initial distributions of $Y_0$, $Z_0$ on $B_i$. Then,
	%	We claim that ${\|\mu_0^Y -\mu_0^Z\|}_\textsc{tv} = o(1)$. Indeed,
	%	\begin{align*}
	%	{\|\mu_0^Y -\mu_0^Z\|}_\textsc{tv} \le \sum_{i=1}^N {\|\mu_0^{Y,i} -\mu_0^{Z,i}\|}_\textsc{tv} = N {\|\mu_0^{Y,1} -\mu_0^{Z,1}\|}_\textsc{tv}, 
	%	\end{align*}
	%	and
	\begin{align*}
	{\|\mu_0^{Y,1} -\mu_0^{Z,1}\|}_\textsc{tv} 
	%&= 
	%\frac{1}{2} \sum_{\sigma \in \mathcal S_1 \cap \mathcal A_1} 
	%\left|\frac{\mu_{B_1}^1(\sigma)}{\mu^1_{B_1}(\mathcal S_1 \cap \mathcal A_1)} - \frac{\mu_{B_1}^1(\sigma)}{\mu^1_{B_1}(\mathcal A_1)}\right|
	%+ 
	%\frac{1}{2} \sum_{\sigma \in \mathcal A_1 \setminus \mathcal S_1} \frac{\mu_{B_1}^1(\sigma)}{\mu^1_{B_1}(\mathcal A_1)} \\
	%&= \frac{\left|\mu^1_{B_1}(\mathcal A_1)-\mu^1_{B_1}(\mathcal S_1 \cap \mathcal A_1)\right|}{2\mu^1_{B_1}(\mathcal A_1)} + \frac{\mu^1_{B_1}(\mathcal A_1 \setminus \mathcal S_1)}{2\mu^1_{B_1}(\mathcal A_1)} \\
	&= \frac{\mu^1_{B_1}(\mathcal A_1 \setminus \mathcal S_1)}{\mu^1_{B_1}(\mathcal A_1)} \le \frac{\mu^1_{B_1}(\mathcal S_1^c)}{\mu^1_{B_1}(\mathcal A_1)} = O\left( {e^{-c L}}\right),
	\end{align*}
	where the last inequality follows from Lemma~\ref{lemma:sht:stat}
	and the fact that $\mu^1_{B_1}(\mathcal A_1) = \Omega(1)$ . 
	In summary, since $L = C (\log n)$ and $C$ can be taken large enough, we have proved 
	$$
	{\|Y_T(\mathcal C) -Z_T(\mathcal C)\|}_\textsc{tv} = o(1).
	$$
	
	It remains for us to find a lower bound for ${\|Z_T(\mathcal C) - \hat\mu_{\mathcal C}^1\|}_\textsc{tv}$ in~\eqref{eq:second:traig} for a suitable $T$.
	For a configuration $\sigma$ on $B$, let $f(\sigma)$ denote the number of edges $e_i \in \mathcal C$ that are monochromatic in $\sigma$.
	For any $a \ge 0$
	we have 
	\begin{equation}
	\label{eq:lb:3}
	{\|Z_T(\mathcal C) - \hat\mu_{\mathcal C}^1\|}_\textsc{tv} \ge \Pr[f(Z_T) \ge a] - \Pr\nolimits_{\sigma \sim \mu^1_{B}}[f(\sigma) \ge a].
	\end{equation}
	We will show that, for a suitable $T$ and any $i=1,\dots,N$,
	\begin{equation}
	\label{eq:lb:key}
	\Pr[Z_T(B_i) \in \mathcal A_i] \ge \mu^1_{B_i}(\mathcal A_i) + \frac{1}{N^{1/4}}.
	\end{equation}
	Assuming this is the case, then setting $\mathcal W = \sum_{i=1}^N \mu^1_{B_i}(\mathcal A_i)$
	we obtain by Hoeffding's inequality 
	$$
	\Pr\left[f(Z_T) \ge \mathcal W + N^{3/4} - \sqrt{N \log N}\right] \ge 1 - \frac{1}{N^2}
	$$
	and
	$$
	\Pr\nolimits_{\sigma \sim \mu^1_{B}}\left[f(\sigma) \ge \mathcal W + \sqrt{N \log N}\right] \le \frac{1}{N^2},
	$$
	which yields from~\eqref{eq:lb:3} that
	$
	{\|Z_T(\mathcal C) - \hat\mu_{\mathcal C}^1\|}_\textsc{tv} \ge 1 - {2}/{N^2}
	$
	by taking, e.g., $a = \mathcal W + \sqrt{N \log N}$.
	
	To establish~\eqref{eq:lb:key}, note that by Lemma~\ref{lem:HS} 
	\begin{equation}
	\label{eq:lb:kb}
	\Pr(Z_T(B_i)  \in \mathcal A_i) \ge \mu_{B_i}^1(\mathcal A_i) + (1-\mu_{B_i}^1(\mathcal A_i))^{-T+1}(\Pr(Z_1(B_i)\in \mathcal A_i)-\mu_{B_i}^1(\mathcal A_i))^{T}.
	\end{equation}
	We remark that $\{Z_t\}$ has positive semidefinite transition matrix; 
	this follows from
	the fact $\{Z_t\}$ is a product of SW dynamics in each $B_i$, and the SW dynamics has positive semidefinite transition matrix~\cite{BCSV}. 
	
	Let $\PSWI$ denote the transition matrix of the SW dynamics on $B_i$.
	Then
	\begin{align}
	\Pr(Z_1(B_i)\in \mathcal A_i) &= \sum_{\sigma \in \mathcal A_i} \frac{\mu_{B_i}^1(\sigma)}{\mu_{B_i}^1(\mathcal A_i)} \PSWI(\sigma,\mathcal A_i) = \sum_{\sigma \in \mathcal A_i} \frac{\mu_{B_i}^1(\sigma)}{\mu_{B_i}^1(\mathcal A_i)} \left(\theta(\sigma) + \frac{1-\theta(\sigma)}{q}\right) \notag\\
	&= \frac 1q + \frac{q-1}{q \mu_{B_i}^1(\mathcal A_i)} \sum_{\sigma \in \mathcal A_i} \mu_{B_i}^1(\sigma)\theta(\sigma)~\label{eq:lb:eq1},
	\end{align}
	where $\theta(\sigma)$ denotes the probability that, after the edge percolation phase of the SW step, the end points of the edge $e_i$ are connected in the edge configuration. Similarly,
	\begin{align*}
	\mu_{B_i}^1(\mathcal A_i) &= \sum_{\sigma \in \Omega(B_i)} \mu_{B_i}^1(\sigma) \PSWI(\sigma,\mathcal A_i) = \sum_{\sigma \in \Omega(B_i)\setminus\mathcal A_i} \mu_{B_i}^1(\sigma) \PSWI(\sigma,\mathcal A_i) + \sum_{\sigma \in \mathcal A_i} \mu_{B_i}^1(\sigma) \PSWI(\sigma,\mathcal A_i) \notag\\
	&= \sum_{\sigma \in \Omega(B_i)\setminus\mathcal A_i} \frac{\mu_{B_i}^1(\sigma)}{q} +  \sum_{\sigma \in \mathcal A_i} {\mu_{B_i}^1(\sigma)}\left(\theta(\sigma) + \frac{1-\theta(\sigma)}{q}\right)
	= \frac 1q + \frac{q-1}{q}\sum_{\sigma \in \mathcal A_i} \mu_{B_i}^1(\sigma) \theta(\sigma).
	\end{align*}
	Combining with~\eqref{eq:lb:eq1} we get
	\begin{align*}
	\Pr(Z_1(B_i)\in \mathcal A_i) - \mu_{B_i}^1(\mathcal A_i) 
	&= \frac{q-1}{q} \left(\frac{1}{\mu_{B_i}^1(\mathcal A_i)} - 1\right)\sum_{\sigma \in \mathcal A_i} \mu_{B_i}^1(\sigma) \theta(\sigma) \\
	&\ge  \frac{q-1}{q} \left(\frac{1}{\mu_{B_i}^1(\mathcal A_i)} - 1\right) p \cdot \mu_{B_i}^1(\mathcal A_i) 
	= \frac{q-1}{q} \left(1 - \mu_{B_i}^1(\mathcal A_i)\right) p,
	\end{align*}
	where in the last inequality we use the fact that $\theta(\sigma) \ge p$ when $\sigma \in \mathcal A_i$; recall that $ p = 1 - e^{-\beta}$.
	
	Plugging this bound into~\eqref{eq:lb:kb}, we obtain
	\begin{align*}
	\Pr(Z_T(B_i)  \in \mathcal A_i) &\ge \mu_{B_i}^1(\mathcal A_i) + (1-\mu_{B_i}^1(\mathcal A_i))^{-T+1}\left(\frac{q-1}{q} \left(1 - \mu_{B_i}^1(\mathcal A_i)\right) p\right)^{T} \\
	&= \mu_{B_i}^1(\mathcal A_i) + (1-\mu_{B_i}^1(\mathcal A_i))\left(\frac{(q-1)p}{q} \right)^{T}
	\ge \mu_{B_i}^1(\mathcal A_i) + \frac{1}{N^{1/4}},
	\end{align*}
	where the last inequality holds for $T = \xi \log n$ for a suitable constant $\xi > 0$
	since $\mu_{B_i}^1(\mathcal A_i) = \Omega(1)$.
	%	
	%	$$f(\sigma)=\frac1N \sum_{i=1}^N \IND{\mathcal A_i}.$$ 
	%	We show that:
	%	\begin{enumerate}
	%		\item $\Pr[X_T(C) \neq Y_T(C)] = o(1).$
	%		\item $\Pr[Y_T(\mathcal A_i)] \ge \mu^1_{B_i} (\mathcal A_i) + a(\beta)^T$.
	%		\item $\mu(\cup \mathcal A_i) \ge \mu(\cup \mathcal A_i \mid \sigma(U) = 1) - o(1)$.
	%	\end{enumerate}	
	%		
\end{proof}

%\subsection{Propagation of disagreements under SW steps}

We provide next the proof of Theorem~\ref{thm:dp}, our bound on the speed of disagreement propagation under the SW coupling.

\begin{proof}[Proof of Theorem~\ref{thm:dp}]
	We will show inductively that with high probability
	disagreements propagate a distance of at most $L$ in each step.
	Let $\Lambda_i(k) \subseteq B_i$ be the cube of side length $k <\ell$ centered at $v_i$;
	recall that $e_i = \{v_i,u_i\}$ where $v_i$ is the center of $B_i$.
	Let $\Lambda(k) = \bigcup_{i=1}^N \Lambda_i(k)$.
	Note that at time $0$, $X_0$ and $Y_0$ agree on 
	$B = \Lambda(\ell)$.
	
	Let us assume that $X_t$ and $Y_t$ agree on $\Lambda(k)$ for some $k \le \ell-2L$. 
	Suppose $Y_t$ is $L$-shattered in each $B_i$; i.e., $Y_t(B_i) \in \mathcal S_i$ for $i=1,\dots,N$.
	%(We take $k \le \ell-2L$ so that we are in small component region of the $L$-shattered configurations; i.e., sufficiently far from the boundary of the $B_i$'s.)
	If $E(k)$ is the set of edges with both endpoints in~$\Lambda(k)$, 
	after adding the monochromatic edges of $E(k)$ 
	in $X_t$ and $Y_t$ coupled with the SW coupling, the joint edge/spin configuration on $(\Lambda(k),E(k))$ will be the same in both copies. 
	However, when assigning the new spins, the connected components are not necessarily the same since there can be external connections; i.e., monochromatic paths in $V \setminus \Lambda(k)$.
	This may create disagreements between the two chains on $\Lambda(k)$ but only in the components touching the boundary of $\Lambda(k)$. Since we are assuming that $Y_t$ is $L$-shattered in each $B_i$, then with probability $1-N |B_i|\cdot\exp(-\gamma L)$,
	the disagreements cannot propagate to $\Lambda(k-2(L+1))$.
	Consequently, the spin configurations of $X_{t+1}$ and $Y_{t+1}$ on $\Lambda(k-2(L+1))$ are the same.
	
	Proceeding inductively, and assuming that $Y_t$ is $L$-shattered
	in each $B_i$ for all $t = 0, \dots, T$, 
	we deduce from a union bound that
	%First note that at time $0$, $X_0$ and $Y_0$ agree on the cube $V \setminus \partial V$
	%and $Y_0$ is $L$-shattered.
	$X_T$ and $Y_T$ agree on $\Lambda(\ell-2L - 2T(L+1))$
	with probability at least $1-T N |B_i|\exp(-\gamma L)$,
	provided $\ell > 2T(L+1)+2L$.
	Therefore, $X_t(\mathcal C) = Y_t(\mathcal C)$ for all $t \le T$
	since $\mathcal C \subseteq \Lambda(\ell-2L - 2t(L+1))$.
	
	It remains for us to show that $Y_t$ is $L$-shattered
	in each $B_i$ for all $t = 0, \dots, T$ with probability at 
	least $1-o(1)$.	
	The configuration of $Y_0$ on $B_i$ is sampled proportional to $\mu_{B_i}^1$
	on $\mathcal S_i \cap \mathcal A_i$.
	For $\sigma \in \mathcal S_i \cap \mathcal A_i$, let $\pi_i(\sigma) = \mu_{B_i}^1(\sigma)/\mu_{B_i}^1(\mathcal S_i \cap \mathcal A_i)$
	and for $\sigma \in \mathcal S_i$, let $\hat\pi_i(\sigma) = \mu_{B_i}^1(\sigma)/\mu_{B_i}^1(\mathcal S_i)$.
	We have
	\begin{align*}
	{\Pr}_{Y_0(B_i) \sim \hat\pi_i}(Y_t(B_i) \in \mathcal S_i) 
	= &	{\Pr}_{Y_0(B_i) \sim \hat\pi_i}(Y_t(B_i) \in \mathcal S_i \mid Y_0(B_i) \in \mathcal A_i ) {\Pr}_{Y_0(B_i) \sim \hat\pi_i} (Y_0(B_i) \in \mathcal A_i) + \\
	&{\Pr}_{Y_0(B_i) \sim \hat\pi_i}(Y_t(B_i) \in \mathcal S_i \mid Y_0(B_i) \not\in \mathcal A_i ) {\Pr}_{Y_0(B_i) \sim \hat\pi_i} (Y_0(B_i) \not\in \mathcal A_i),
	\end{align*}	
	and so
	\begin{align}		
	{\Pr}_{Y_0(B_i) \sim \pi_i}(Y_t(B_i) \in \mathcal S_i) 
	&={\Pr}_{Y_0(B_i) \sim \hat\pi_i}(Y_t(B_i) \in \mathcal S_i \mid Y_0(B_i) \in \mathcal A_i ) \notag\\
	&\ge 	1 - \frac{1-{\Pr}_{Y_0(B_i) \sim \hat\pi_i}(Y_t(B_i) \in \mathcal S_i)}{{\Pr}_{Y_0(B_i) \sim \hat\pi_i} (Y_0(B_i) \in \mathcal A_i)} \label{eq:at}
	\end{align}	
	By Lemmas~\ref{lem:HS} and~\ref{lemma:sht:stat},
	$$
	{\Pr}_{Y_0(B_i) \sim \hat\pi_i}(Y_t(B_i) \in \mathcal S_i) \ge \mu_{B_i}^1(\mathcal S_i) \ge 1 - \frac{1}{e^{cL}}.
	$$
%	Moreover, ${\Pr}_{Y_0(B_i) \sim \hat\mu_0} (Y_0(B_i) \in \mathcal A_i) = \alpha(\beta,q,d) \in (0,1)$, and so we obtain from~\eqref{eq:at}	
	Moreover, ${\Pr}_{Y_0(B_i) \sim \hat\pi_i} (Y_0(B_i) \in \mathcal A_i) = \alpha(\beta,q,d)=\Omega(1)$, and so we obtain from~\eqref{eq:at}
	$$
	{\Pr}_{Y_0(B_i) \sim \pi_i}(Y_t(B_i) \in \mathcal S_i) = 1 - O\left(\frac{1}{e^{cL}}\right).
	$$
	Setting $\mathcal S = \otimes_{i=1}^N \mathcal S_i$,
	a union bound over the $B_i$'s implies
	$$
	{\Pr}(Y_t \in \mathcal S) = 1 - O\left(\frac{N}{e^{cL}}\right).
	$$
	It follows from
	another union bound over the steps that
	$
	\Pr(\forall t \le T: Y_t \in \mathcal S) \ge  1 - O\left(\frac{T N }{e^{cL}}\right).
	$	
	Setting $T = A (\log n)$
	(which satisfies $\ell > 2T(L+1)+2L$ as required),
	recalling that $N = \Theta(n/\ell^{d})$, and taking $C$ sufficiently large, we
	obtain that  
	$
	\Pr(\forall t \le T: Y_t \in \mathcal S) \ge  1 - o(1),
	$
	and hence
	$$
	\Pr\left[\forall t \le A \log n:~X_t(\mathcal C) = Y_t(\mathcal C)\right] = 1-o(1),
	$$
	as claimed.		
\end{proof}

\subsection{Proof of auxiliary lemmas}
\label{subsec:aux:lb}

We conclude the section with the proofs of Lemma~\ref{lem:HS} and~\ref{lemma:sht:stat}.

\begin{proof}[Proof of Lemma~\ref{lem:HS}]
	From the spectral decomposition (see, e.g.,~\cite{LP}), one has
	\begin{equation}\label{eq:bo2}
	\Pr(X_t\in B)= \pi(B) + (1-\pi(B))\sum_{\ell=2}^{|\Omega|}\kappa_\ell \lambda_\ell^t,
	\end{equation}
	where $\kappa_\ell\geq 0$ and $\sum_{\ell=2}^{|\Omega|}\kappa_\ell=1$, and $\lambda_2,\dots,\lambda_{|\Omega|}$ denote the non-negative eigenvalues of the transition matrix of the Markov chain except for the principal eigenvalue $\lambda_1=1$. In particular,
	\begin{equation}\label{eq:bo3}
	\Pr(X_1\in B)- \pi(B) = (1-\pi(B))\sum_{\ell=2}^{|\Omega|}\kappa_\ell \lambda_\ell.
	\end{equation}
	The convexity of the function $f(x) = x^t$ for $t\geq 1$, $x \ge 0$ and Jensen's inequality imply that 
	\begin{equation*}\label{eq:bo4}
	\Pr(X_t\in B)\geq \pi(B) + (1-\pi(B))\left(\sum_{\ell=2}^{|\Omega|}\kappa_\ell \lambda_\ell\right)^t.
	\end{equation*}
	From \eqref{eq:bo3} it follows that  $\sum_{\ell=2}^{|\Omega|}\kappa_\ell \lambda_\ell=(1-\pi(B))^{-1}(\Pr(X_1\in B)-\pi(B))$, and so for $t \ge 1$
	$$
	\Pr(X_t\in B)\geq \pi(B) + (1-\pi(B))^{-t+1}\left((\Pr(X_1\in B)-\pi(B))\right)^t.
	$$
	Finally, observe also that from~\eqref{eq:bo2} it follows that 
	$\Pr(X_t\in B)- \pi(B) \ge 0$.%\antonio{adjust}
	% and thus $\Pr(X_t\in B)\geq \pi(B)$ for all $t \ge 0$.	
\end{proof}

\begin{proof}[Proof of Lemma~\ref{lemma:sht:stat}]
	Let $G=(V,\EdgeSet)$ be an $n$-vertex cube in $\bbZ^d$.
	Let $\mathcal {E}_L$ be the set of all edge configurations $A \subseteq \EdgeSet$
	such that for all $v \in V$ at distance at least $2L$ from partial $\partial V$,
	%the connected component of $v$ in $A$ has size at most $L$. 
	the connected component of $v$ in $A$ 
	does not reach the boundary of the cube $\Lambda_v(L)$ centered at $v$ of side length $L$.	
	Consider the admissible boundary condition $(1,1)$ of the joint space that is wired and all spins are $1$; i.e., in Definition~\ref{def:admissible}  we set $V_0 = \partial V$, $E_0 = \partial E$, $\psi = 1$ and $\varphi=1$.
	From Theorem 1.2 in~\cite{DRT}, we get that for a suitable constant $a > 0$
	\begin{equation}
	\label{eq:lb:final}
	\nu^{(1,1)}(\mathcal {E}_L) \ge 1 - \frac{n}{e^{aL}}.
	\end{equation}	
	We remark that Theorem 1.2 from~\cite{DRT} is stated for the random-cluster measure with the wired boundary condition, but our statement for the joint measure $\nu$ is equivalent; see Section~\ref{sec:rc} below  for a definition of the random-cluster measure and its boundary conditions. We also note that Theorem 1.2 from~\cite{DRT} requires a weaker (easier to satisfy) condition than SSM.
	Recall that $\mathfrak S$ is the set of $L$-shattered configurations of $V$. Then,
	\begin{align*}
	\nu^{(1,1)}(\mathcal {E}_L) 
	&= \sum_{\sigma \in \mathfrak S} \mu^1(\sigma) \nu^{(1,1)}(\mathcal {E}_L\mid \sigma) + \sum_{\sigma \in \mathfrak S^c} \mu^1(\sigma) \nu^{(1,1)}(\mathcal {E}_L\mid \sigma) \\
	&\le \mu^1(\mathfrak S) + (1 - \mu^1(\mathfrak S))\left(1 - \frac{n}{e^{\gamma L}}\right) = 1 - \frac{n}{e^{\gamma L}} + \frac{n \mu^1(\mathfrak S)}{e^{\gamma L}}.
	\end{align*}
	Combining this with~\eqref{eq:lb:final}, we obtain that 
	$
	\mu^1(\mathfrak S) \ge 1 - \exp(-(a-\gamma)L);
	$
	the result follows by choosing $\gamma = a/2 = c$.
\end{proof}

\section{Entropy decay for dynamics in the joint space}\label{sec:entropyjoint}

In this section we study the implications of
our spin/edge factorization of entropy with respect to the joint measure $\nu$
for various dynamics on the joint space on $\bbZ^d$.
%, and prove Theorems~\ref{th:swtheorem} and~\ref{cor:loc:intro}
%from the introduction. 

\subsection{Swendsen-Wang in the joint space}%: discrete time bounds}
%\alistair{Removed "discrete time bounds" from the title of this subsection.  OK?}
 First, we consider the SW dynamics in the joint space.
Let $K$ denote the $|\Joint| \times |\Joint|$ stochastic matrix corresponding
to re-sampling the spins of a joint configuration given the edges, and similarly 
let $T$ be the stochastic matrix corresponding to re-sampling the edges given the spins. 
Specifically, 
\begin{align*}
K ((\s,A),(\t,B)) &= \IND(A=B) \nu (\tau \mid A ) \\
T ((\s,A),(\t,B)) &= \IND(\s=\t) \nu (B \mid \s ).
\end{align*}
Note that $T = T^* = T^2$ and $K = K^* = K^2$; i.e., $K$ and $T$ are self-adjoint idempotent operators.

The Markov chains with transition matrices $KT$ and $TK$ are natural variants of the SW dynamics in the joint space.  In the terminology of~\cite{DSC}, they
are the Markov chains in the joint space corresponding to the two-component Gibbs sampler.
The chains with transition matrices $\frac12(K+T)$, $KTK$ and $TKT$  are also of interest as reversible versions of %the additive and multiplicative reversiblizations of 
$KT$ and $TK$.
%We now prove Theorem~\ref{th:swtheorem} from the introduction, which states that, under SSM,
%all these  dynamics satisfy entropy decay with respect to~$\nu$ and hence have
%$O(\log n)$ mixing time.
We show that, under SSM, all of these dynamics
satisfy entropy decay with respect to~$\nu$ and hence have
$O(\log n)$ mixing time.
\begin{theorem}\label{th:swtheorem}
	Let $P$ be {\em any} of the stochastic matrices 
	$KT$, $TK$, $\frac12(K+T)$, $KTK$ or $TKT$.
	SSM implies that there exists constant $\d>0$ such that, 
	for all functions $f:\Joint \mapsto \bbR_+$,
	\begin{align*}
	\ent_\nu( P f) \leq (1-\d) \ent_\nu(f).
	\end{align*}
	In particular, 
	the Markov chain with transition matrix $P$ satisfies $\tmix(P) =O(\log n)$.
\end{theorem}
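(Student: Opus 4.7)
The plan is to exploit the fact that $K$ and $T$ are the conditional expectation operators $Kf = \nu[f\tc A]$ and $Tf = \nu[f\tc\si]$, combined with the spin/edge factorization of Lemma~\ref{lemma:main-intro-ad}, which by Theorem~\ref{thm:cp} holds with a constant $C=O(1)$ whenever SSM holds. From the entropy decompositions~\eqref{eq:total:entEdges}--\eqref{eq:total:entSpins} I immediately get the exact identities
\[
\ent_\nu(Kf) = \ent_\nu(f) - \nu[\ent_\nu(f\tc A)], \qquad \ent_\nu(Tf) = \ent_\nu(f) - \nu[\ent_\nu(f\tc\si)].
\]
These extract only one of the two terms on the right-hand side of the spin/edge factorization, so by themselves they do not yet deliver a constant-rate contraction. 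The additional observation that drives everything is that $Kf$ depends on $A$ alone and $Tf$ depends on $\si$ alone; feeding them into the spin/edge factorization causes one of the two conditional entropies to vanish identically, leaving the clean bounds
\[
\ent_\nu(Kf)\le C\,\nu[\ent_\nu(Kf\tc\si)], \qquad \ent_\nu(Tf)\le C\,\nu[\ent_\nu(Tf\tc A)].
\]

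The contraction for each $P$ then follows by chaining these pieces. For $P=KT$, combining the identity with the spin/edge bound on $Tf$ yields
\[
\ent_\nu(KTf) = \ent_\nu(Tf) - \nu[\ent_\nu(Tf\tc A)] \le \big(1-\tfrac{1}{C}\big)\ent_\nu(Tf) \le \big(1-\tfrac{1}{C}\big)\ent_\nu(f),
\]
and symmetrically for $TK$. For $P=KTK$ the function $TKf$ depends on $\si$ only, so the same trick applied to $TKf$ gives $\ent_\nu(KTKf)=\ent_\nu(TKf)-\nu[\ent_\nu(TKf\tc A)]\le (1-\tfrac{1}{C})\ent_\nu(TKf)\le (1-\tfrac{1}{C})\ent_\nu(f)$; the case $TKT$ is handled identically with the roles of $K$ and $T$ swapped. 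For $P=\tfrac12(K+T)$ I would invoke convexity of $\ent_\nu$ (via the variational principle~\eqref{varprin}) to write $\ent_\nu(Pf)\le \tfrac12[\ent_\nu(Kf)+\ent_\nu(Tf)]$, and then combine the two exact identities with spin/edge factorization applied to $f$ itself to obtain $\ent_\nu(Pf)\le (1-\tfrac{1}{2C})\ent_\nu(f)$.

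The passage to mixing time is standard. The chains with transition matrices $\tfrac12(K+T)$, $KTK$ and $TKT$ are reversible (since $K^*=K$ and $T^*=T$), so Lemma~\ref{lem:mix} together with Remark~\ref{rmk:entropy-mixing} and the bound $\log(1/\nu_*)=O(n)$ delivers $\tmix(P)=O(\log n)$ at once. For the non-reversible pair $KT$, $TK$, one has $(KT)^*=TK$ and $(TK)^*=KT$, so the relative-entropy contraction $H(\z P\tc\nu)\le (1-\d)H(\z\tc\nu)$ required by Lemma~\ref{lem:mix} is, via Remark~\ref{rmk:entropy-mixing}, equivalent to the entropy contraction I proved for the adjoint $P^*$, which in each case is the other element of the pair and has already been established. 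I do not anticipate a serious obstacle; the one subtle point is checking that spin/edge factorization may legitimately be applied to $Kf$, $Tf$ and $TKf$, which is fine because these are honest nonnegative functions on $\Joint$ that merely happen to be measurable with respect to a single coordinate, and that degeneracy is exactly what makes the bound tight in one direction.
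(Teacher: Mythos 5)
Your argument is correct, and for the composite chains $KT$, $TK$, $KTK$, $TKT$ it genuinely departs from the paper's route. The paper first establishes the contraction for the reversible mixture $\tfrac12(K+T)$ exactly as you do, but then transfers it to $KT$ and $TK$ via a general-purpose comparison lemma (Lemma~\ref{lemma:scan}), which approximates a projection $S$ by the lazy chains $U_n=[(S+I)/2]^n$ and passes to a limit, and finally derives $KTK$, $TKT$ from $KT$, $TK$ using the data-processing inequality~\eqref{conv7}. You instead exploit the fact that $Tf$ (resp.\ $Kf$) depends on $\si$ (resp.\ $A$) alone, so that feeding $Tf$ into the spin/edge factorization kills the $\nu[\ent_\nu(\cdot\tc\si)]$ term and yields directly $\nu[\ent_\nu(Tf\tc A)]\ge C^{-1}\ent_\nu(Tf)$; combined with the exact decomposition $\ent_\nu(KTf)=\ent_\nu(Tf)-\nu[\ent_\nu(Tf\tc A)]$ this gives the contraction for $KT$ in one line, and the same scheme handles $TK$, $KTK$, $TKT$. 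This is more elementary (no lazy-chain limit) and even gives the slightly sharper rate $1/C$ rather than $1/(2C)$ for these four chains, though that improvement is cosmetic here. What the paper's Lemma~\ref{lemma:scan} buys in exchange is reusability: it is invoked again verbatim in the proof of Theorem~\ref{th:alt-intro} for the alternating scan dynamics $P_EP_O$, where the trick of ``one coordinate absorbs the function'' is not available. Your handling of the mixing-time deduction via adjoints for the non-reversible pair is the same as the paper's and is fine.
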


%
%where now $f$ is a function of both $\si,\eta$. Note that $T=T^*=T^2$ and $K=K^*=K^2$. The  dynamics associated to the following stochastic matrices are of interest in the joint space:
%\begin{align}\label{jointdyn}
%Q_{KT}=\frac12(K+T)\,,\quad S_{KT}=KT\,,\quad S_{TK}=TK\,,\quad S_{KTK}=KTK\,,\quad S_{TKT}=TKT.
%\end{align}
%As before, $Q_{KT}$ and $ S_{KTK}, S_{TKT}$ are reversible, while $S_{KT},S_{TK}$ are not, and one has $S_{KT}=S_{TK}^*$. All of these are natural variants of the SW dynamics in the joint space, and all of them satisfy the entropy decay and the $O(\log |V|)$ mixing time bound under SSM. 
%\begin{theorem}\label{th:swtheorem}
%	Assume that $q,\b$ are such that the Potts measure satisfies the usual SSM. Suppose that $\nu$ is the joint measure corresponding to a rectangular region $V$ with admissible boundary condition. There exists $\d>0$ independent of $V$ and the boundary conditions, such that the 
%	following holds when $P$ is {\em any} of the stochastic matrices $Q_{KT},S_{KT},S_{TK}$,$S_{KTK}$, or $S_{TKT}$. The Markov chain with transition $P$ satisfies the entropy decay with rate $\d$, and the mixing time $\tmix(P)$ satisfies \eqref{reltmix}. In particular, uniformly in the admissible boundary condition:
%	\begin{align}\label{reltmixjoint}
%	\tmix(P) =O(\log|V|).
%	\end{align}
%\end{theorem}

First we state the following lemma, which is proved later and will be useful in several of our proofs, including that of Theorem~\ref{th:swtheorem}.
	
\begin{lemma}
	\label{lemma:scan}
	Let $S$ and $S'$ be two idempotent stochastic matrices reversible with respect to a distribution
	$\pi$ over $\Gamma$, and let $Q = \frac{S+S'}{2}$. 
	Suppose there exists $\delta \in (0,1)$ such that, for any positive function $f:\Gamma \mapsto \bbR$, we have
	$
	\ent_\pi(Qf) \le (1-\delta)\ent_\pi(f)
	$. Then
	$
	\ent_\pi(SS'f) \le (1-\delta)\ent_\pi(f)
	$
	and
	$
	\ent_\pi(S'Sf) \le (1-\delta)\ent_\pi(f)
	$.
\end{lemma}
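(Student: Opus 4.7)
The plan is to combine Jensen's inequality for entropy under stochastic kernels with a small algebraic identity that exploits the idempotence of $S$ and $S'$. Concretely, for any stochastic matrix $R$ with invariant $\pi$, applying Jensen row-by-row to the convex function $x\mapsto x\log x$ gives $\ent_\pi(Rg)\le\ent_\pi(g)$ for every $g\ge 0$; this is where Jensen will be invoked, twice. Note that this part does not even use idempotence --- just stochasticity and $\pi$-invariance.

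The key observation I would exploit is that, because $S^2=S$ and $(S')^2=S'$,
\[
SQS' \;=\; S\cdot\frac{S+S'}{2}\cdot S' \;=\; \frac{S^2S' + S(S')^2}{2} \;=\; \frac{SS'+SS'}{2} \;=\; SS'.
\]
Rewriting $SS'f = S(QS'f)$ and applying Jensen to the outer projection $S$ then gives the ``free'' insertion of $Q$ inside the product: $\ent_\pi(SS'f) = \ent_\pi\bigl(S(QS'f)\bigr)\le\ent_\pi(QS'f)$.

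From here I would feed this into the hypothesis, but applied to the positive function $S'f$ rather than $f$: this produces $\ent_\pi(QS'f)\le(1-\delta)\ent_\pi(S'f)$. A final Jensen step, now for $S'$, gives $\ent_\pi(S'f)\le\ent_\pi(f)$, and chaining the three inequalities yields
\[
\ent_\pi(SS'f) \;\le\; \ent_\pi(QS'f) \;\le\; (1-\delta)\,\ent_\pi(S'f) \;\le\; (1-\delta)\,\ent_\pi(f),
\]
which is the claimed bound. The companion estimate $\ent_\pi(S'Sf)\le(1-\delta)\ent_\pi(f)$ follows from the symmetric identity $S'S = S'QS$ by swapping the roles of $S$ and $S'$ throughout.

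I do not anticipate a serious obstacle. The only subtlety is the ``free insertion'' $SS'=SQS'$: without idempotence this identity would fail and a multiplicative loss would appear. Because the identity is exact, no factor is lost in any step of the chain, so the rate $1-\delta$ passes unchanged from the hypothesis to the conclusion --- which is exactly what makes the lemma useful for transferring entropy decay from the reversible mixture $Q$ to the non-reversible products $SS'$ and $S'S$ that drive the systematic-scan variants of the joint-space Swendsen--Wang dynamics in Theorem~\ref{th:swtheorem}.
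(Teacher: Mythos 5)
Your proof is correct, and it takes a genuinely cleaner route than the paper's at the one place where they differ. Both arguments end with the same three-inequality chain
\[
\ent_\pi(SS'f) \;\le\; \ent_\pi(QS'f) \;\le\; (1-\delta)\,\ent_\pi(S'f) \;\le\; \ent_\pi(f)\cdot(1-\delta),
\]
and the last two inequalities (hypothesis applied to $S'f$, then Jensen for $S'$) are identical. The difference is in how the first inequality is obtained. You observe that $SS' = SQS'$ --- an exact identity that uses idempotence of \emph{both} $S$ and $S'$ --- and then discard the leftmost $S$ with a single application of Jensen for the $\pi$-stationary stochastic matrix $S$. The paper instead first proves the intermediate inequality $\ent_\pi(Sg) \le \ent_\pi(\widetilde{S}g)$ for the lazy matrix $\widetilde{S}=\tfrac{1}{2}(S+I)$ by iterating Jensen along the sequence $U_n=\widetilde{S}^{\,n}$ (this is where it uses $S^2=S$, to compute $U_n$ in closed form and pass to the limit $U_n\to S$), and only then invokes the identity $\widetilde{S}S'=QS'$, which needs only $(S')^2=S'$. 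Your version bypasses the limiting argument entirely, using one extra algebraic cancellation instead; it is shorter and loses nothing. The $S'S$ case is symmetric in both treatments.
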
	

We are now ready to prove Theorem~\ref{th:swtheorem}.

\begin{proof}[Proof of Theorem~\ref{th:swtheorem}]
	Let us consider first the case when $P = \frac{K+T}{2}$.
	Since $P = P^*$, from Lemma~\ref{lem:mix} and Remark~\ref{rmk:entropy-mixing}
	it is sufficient to prove that, for all functions $f: \Joint \mapsto \bbR_+$ with $\mu[f]=1$, 
	\begin{align*}
	\ent_\nu(P f)\leq (1-\d) \ent_\nu( f).
	\end{align*}
    The  convexity of the function $x \log x$ implies 
	\begin{align}\label{convj1}
	Pf\log (Pf)\leq \frac12 Kf\log (K f) + \frac12 Tf\log (T f).
	\end{align}
	If $\nu[f]=1$, then $\nu[Pf]=\nu[Kf]=\nu[Tf]=1$, and therefore taking expectations with respect to $\nu$ in \eqref{convj1} we obtain
	\begin{align}
	\label{convu1}
	\ent_\nu( Pf )\leq \frac12\left[ \ent_\nu(Kf)+\ent_\nu(Tf)\right].
	\end{align}
	Noting that $Kf (\si,A)= \nu(f\tc A)$ and $Tf (\si,A)= \nu(f\tc\si)$,
	the decompositions in~\eqref{eq:total:entEdges} and~\eqref{eq:total:entSpins} imply
	\begin{align*}%\label{convj10}
	\ent_\nu(f)=\ent_\nu(Kf)+\nu\left[\ent_\nu(f\tc A)\right]= \ent_\nu(Tf)+\nu\left[\ent_\nu(f\tc \si)\right].
	\end{align*}
	Hence, \eqref{convu1} becomes
	\begin{align}\label{convj2}
	\ent_\nu( Pf ) \leq \ent_\nu(f) - \frac12\nu\left[\ent_\nu(f\tc A)+ \ent_\nu(f\tc \si)\right].
	\end{align}
	Lemma \ref{lemma:main-intro-ad} now implies 
	\begin{align*}%\label{convj4}
	\ent_\nu( Pf)\leq (1-\d)\ent_\nu( f),
	\end{align*}
	with $\d=1/2C$.
	This proves the theorem for the case when $P=\frac{K+T}{2}$.
	 The result for $KT$ and $TK$ follows from Lemma~\ref{lemma:scan}, by noting that
	 $K^2 = K = K^*$ and $T^2 = T = T^*$, and noting that $(KT)^*=TK$ and $(TK)^*=KT$. 
	 (Note that in Lemma~\ref{lemma:scan}, we do not require the matrices to be ergodic.) 
	 Finally, the cases $P=KTK$, $P=TKT$ follow from the cases $P=KT$ and $P=TK$ with the observation that, by \eqref{conv7},
	 $ \ent_\nu(KTK f)\leq \ent_\nu( TK f)$ and $\ent_\nu( TKT f)\leq \ent_\nu( KT f)$.
\end{proof}

Finally, we go back and supply the missing proof of Lemma~\ref{lemma:scan}.

\begin{proof}[Proof of Lemma~\ref{lemma:scan}]
	Let us first show that 
	\begin{align}\label{conv5}
	\ent_\pi (S f)\leq \ent_\pi (\wt S f),
	\end{align}
	where $\wt S = \frac{S+I}{2}$ is a lazy version of $S$; $I$ denotes the identity matrix.
	%\alistair{Have we defined ${\bf 1}$ previously?} 
	To this end, define $U_n = \left[\frac12(S+ I)\right]^n$.
	Then we have $U_1=\wt S$ and $U_n = \frac{(2^n-1)S + I}{2^n}\to S$ as $n\to\infty$. Therefore, \eqref{conv5} follows if we prove that for all $n\geq 1$
	\begin{align}\label{conv6}
	\ent_\pi\, (U_{n+1}f)\leq \ent_\pi(U_n f).
	\end{align}
	On the other hand, if $U$ is any stochastic matrix with stationary distribution $\pi$, then for any function $f: \Gamma \mapsto \bbR_+$ with $\pi[f]=1$ we have $\pi[Uf] =1$. Hence, 
	$\ent_\pi (Uf) = \pi[ (Uf)\log(Uf)]$. Since $U$ is a stochastic matrix, the convexity of the function $x\log x$ implies $(Uf)\log(Uf) \le U(f\log f)$, and so
	\begin{align}\label{conv7}
	\ent_\pi (Uf) &\leq \pi[U(f\log f)] = \pi[f\log f]	= \ent_\pi(f).
	\end{align}
	Since $U_{n+1}f=U_1U_n f$, applying \eqref{conv7} with $f$ replaced by $U_nf $ and with $U=U_1$ proves \eqref{conv6} and \eqref{conv5}.
	We note that since $(S')^2=S'$,  $$\wt S S' = \frac12(S+S')S'=QS'.$$
	Applying \eqref{conv5} with $f$ replaced by $S'f$ we obtain
	\begin{align*}
	%\label{conv8}
	\ent_\pi (S S'f)\leq  \ent_\pi (\wt S S' f) = \ent_\pi (QS'f) \leq (1-\d)\ent_\pi (S'f) \leq (1-\d)\ent_\pi (f),
	\end{align*}
	where 
	the second inequality follows from the assumption that $Q$ contracts entropy for any function
	and the last one follows again from \eqref{conv7}. % that $\ent_\pi(Uf)\leq \ent_\pi f$ if $U$ is any stochastic matrix with stationary distribution $\pi$.
	This completes the proof for $SS'$. The same argument with $S$ and $S'$ exchanged applies for $S'S$ and we are done.
\end{proof}	

\subsection{The local dynamics in the joint space}
\label{subsec:local}

In this section, we use Lemma~\ref{lemma:main-intro-ad} to derive tight bounds for the local (Glauber)
dynamics in the joint space; this dynamics has been recently considered in~\cite{CL}, 
but as far as we know there are no results in the literature concerning its rate of 
convergence to stationarity. The dynamics is defined as follows:
in each step, with probability $1/2$ update a vertex and with probability $1/2$ update an edge. To update a vertex, pick $v \in V$ uniformly at random and perform a ``heat-bath'' update at $v$ (i.e., replace the spin of $v$ with a new spin sampled from the conditional
distribution of the spin at $v$ given the current spin/edge configuration);
to update an edge,  pick $e\in \bbE$ uniformly at random and perform a ``heat-bath'' update at~$e$.
%The local heat-bath moves, on both spins and edges, are particularly simple in the joint space; see Section~\ref{subsec:local}. 

%In this section, we show that the natural Glauber dynamics for the joint measure $\nu$ mixes 
%in time $O(n\log n)$ whenever the spin marginal of $\nu$ satisfies the SSM property.
%In particular, we prove Theorem~\ref{cor:loc:intro} from the introduction. 

For any $v \in V$, $e \in \bbE$, let $Q_v$ denote the stochastic matrix corresponding to the single heat-bath update at vertex $v$, and let $W_{e}$ denote the 
stochastic matrix for the single heat-bath update at the edge $e$. Then the transition matrix $P_{\textsc{local}}$ of the Glauber dynamics in the joint space is given by
\begin{align}\label{locdyn}
P_{\textsc{local}} = \frac1{2|V|}\sum_{v\in V} Q_v + \frac1{2|\bbE|}\sum_{e\in \bbE} W_e.
\end{align}

\begin{theorem}\label{cor:loc:intro}
	SSM implies that there exists a constant $\d>0$ such that, 
	for all $f:\Joint \mapsto \bbR_+$
	\begin{align}
	\label{eq:local:ent}
	\ent_\nu(P_{\textsc{local}} f) \leq \left(1-\frac{\delta}{n} \right) 	\ent_\nu(f).
	\end{align}
	Moreover, the mixing time of the local dynamics satisfies $\tmix(P_{\textsc{local}}) =O (n \log n)$.
\end{theorem}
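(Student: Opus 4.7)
The plan is to prove the entropy contraction~\eqref{eq:local:ent}; once this is in hand, the mixing time bound $\tmix(P_\textsc{local}) = O(n \log n)$ follows from Lemma~\ref{lem:mix} and Remark~\ref{rmk:entropy-mixing}, using that $P_\textsc{local}$ is reversible (as a convex combination of heat-bath operators, each reversible with respect to $\nu$) and that $\log\log(1/\nu_\ast) = O(\log n)$.

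The first step is to apply convexity of $x \mapsto x\log x$ together with the fact that $\nu$ is stationary for every $Q_v$ and $W_e$ to obtain
\[
\ent_\nu(P_\textsc{local} f) \le \frac{1}{2|V|}\sum_{v} \ent_\nu(Q_v f) + \frac{1}{2|\bbE|}\sum_{e} \ent_\nu(W_e f).
\]
Since $Q_v f = \nu[f\tc\sigma_{V\setminus v}, A]$, the entropy decomposition~\eqref{eq:total:ent-dec} yields $\ent_\nu(Q_v f) = \ent_\nu(f) - \nu[\ent_v^{\textsc{loc}}(f)]$, where I write $\ent_v^{\textsc{loc}}(f) := \ent_\nu(f\tc\sigma_{V\setminus v}, A)$; an analogous identity holds for $W_e$ with $\ent_e^{\textsc{loc}}(f) := \ent_\nu(f\tc\sigma, A_{\bbE \setminus e})$. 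Using $|\bbE| = O(n)$ from bounded degree, this gives
\[
\ent_\nu(P_\textsc{local} f) \le \ent_\nu(f) - \frac{c_1}{n}\left[\sum_v \nu[\ent_v^{\textsc{loc}}(f)] + \sum_e \nu[\ent_e^{\textsc{loc}}(f)]\right]
\]
for a constant $c_1 > 0$.

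The main step is to establish an \emph{approximate tensorization of entropy} for the joint measure: there exists $C_0 = C_0(d,q,\beta) > 0$ such that
\[
\ent_\nu(f) \le C_0\left[\sum_v \nu[\ent_v^{\textsc{loc}}(f)] + \sum_e \nu[\ent_e^{\textsc{loc}}(f)]\right].
\]
Combining this with the previous display yields~\eqref{eq:local:ent} with $\delta = c_1/C_0$. To prove the tensorization, I would first derive an even/odd factorization for $\nu$ by combining Lemma~\ref{lem:conc} with Theorem~\ref{thm:cp} (valid under SSM): there exists $\delta_2 > 0$ such that $\ent_\nu(f) \le \delta_2^{-1}\nu[\ent_\nu(f\tc\sigma_E) + \ent_\nu(f\tc\sigma_O)]$. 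Since $\nu(\cdot\tc\sigma_E) = \bigotimes_{x \in O}\nu_x(\cdot\tc\sigma_E)$ is a product measure over odd vertices (see Section~\ref{sec:factorization}), standard tensorization of entropy for product measures gives $\nu[\ent_\nu(f\tc\sigma_E)] \le \sum_{x \in O}\nu[\ent_x(f\tc\sigma_E)]$, with the symmetric inequality obtained by swapping $E$ and $O$.

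The key remaining ingredient, and the main technical obstacle, is to bound each block entropy $\ent_x(f\tc\sigma_O)$ by single-vertex and single-edge entropies. For this I would invoke Lemma~\ref{lem:tensorx} directly, which gives $\ent_x(f\tc\sigma_O) \le \delta_1^{-1}[\nu_x[\ent_x(f\tc\sigma)\tc\sigma_O] + \nu_x[\ent_x(f\tc\sigma_O,A)\tc\sigma_O]]$. The second term equals $\nu[\ent_x^{\textsc{loc}}(f)]$ in expectation, since $\sigma_x$ is the only variable to sample once $\sigma_{V\setminus x}$ and $A$ are fixed. The first term is the entropy of $f$ over $A_{\partial x}$ with $\sigma$ fixed; because $\nu_x(A_{\partial x}\tc\sigma)$ is a product of Bernoulli$(p)$ distributions over the monochromatic edges incident to $x$, one more application of product tensorization bounds it in expectation by $\sum_{e \in \partial x}\nu[\ent_e^{\textsc{loc}}(f)]$. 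Summing over $x \in E$ and $x \in O$, and using that every edge of the bipartite lattice $\bbZ^d$ has exactly one endpoint in each part, I conclude $\nu[\ent_\nu(f\tc\sigma_E) + \ent_\nu(f\tc\sigma_O)] \le \delta_1^{-1}[2\sum_e \nu[\ent_e^{\textsc{loc}}(f)] + \sum_v \nu[\ent_v^{\textsc{loc}}(f)]]$, which completes the approximate tensorization with $C_0 = 2/(\delta_1\delta_2)$.
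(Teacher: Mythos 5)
Your proof is correct and follows essentially the same route as the paper's: convexity reduces to single-site and single-edge conditional entropies, and the approximate tensorization over sites and edges is obtained by chaining the even/odd factorization (Theorem~\ref{thm:cp} plus Lemma~\ref{lem:conc}) with product tensorization, Lemma~\ref{lem:tensorx}, the identification $\ent_x(f\tc\si_O,A)=\ent_\nu(f\tc\si_{V\setminus x},A)$, and one more product tensorization over the edges incident to~$x$. (The only blemish is a brief slip between conditioning on $\si_E$ and $\si_O$ mid-argument, but the intended parity pairing is clear and the logic is unaffected.)
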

The mixing time bound in this theorem is asymptotically tight. 
%Indeed, it is straightforward to construct test functions that achieve equality in~\eqref{eq:local:ent}. 
This follows from the lower bounds in~\cite{HS} by considering the projection of $P_{\textsc{local}}$ on the spins; see Remark~\ref{rmk:prj}.

The heat-bath updates in the joint space are quite simple. 
For a vertex $v \in V$, the heat-bath update at $v$ assigns a new spin to $v$ chosen u.a.r.\ from ~$\{1,\dots,q\}$, provided $v$ is isolated (i.e., there are no edges incident to $v$ in the edge configuration); otherwise, the spin at $v$ does not change.
%Thus, letting ${\rm iso}(\eta)\in V$ denote the set of isolated vertices in $\eta$, we may write: 
%\begin{align}\label{singles}
%&Q_x(\si,\eta;\si',\eta')=\IND(\eta=\eta',\si_y=\si'_y\,\forall y\in V\setminus\{x\})\times\\&\qquad \qquad\qquad \qquad 
%\times\left[q^{-1}\IND(x\in{\rm iso}(\eta)) + \IND(x\notin{\rm iso}(\eta),\si_x=\si'_x)\right].
%\end{align}
On the other hand, the heat-bath update at $e \in \EdgeSet$ updates the state of $e$ only if it is monochromatic in the spin configuration; if this is the case, the new state of~$e$ corresponds to a Bernoulli($p$) random variable. 
%Therefore,
%\begin{align}\label{singles2}
%&W_{xy}(\si,\eta;\si',\eta')= \IND(\si=\si',\eta_u=\eta'_u\,\forall u\in \bbE\setminus\{xy\})\times\\&\qquad
%\times\left[p\IND(\eta'_{xy}=1) + (1-p)\IND(\eta'_{xy}=0))\IND(\si_x=\si_y) + \IND(\eta'_{xy}=0,\si_x\neq\si_y)\right].
%\end{align}
We note that $Q_v$ and $W_{e}$ are reversible with respect to $\nu$. 
Moreover, they are projection operators in $L^2(\Joint,\nu)$; that is, $Q_v^2=Q_v=Q_v^*$ and $W^2_{e}=W_{e}=W_{e}^*$. 

\begin{proof}[Proof of Theorem~\ref{cor:loc:intro}]
First note that since $Q_v$ and $W_e$ are reversible with respect to $\nu$, so is $P_{\textsc{local}}$
and by Lemma~\ref{lem:mix} and Remark~\ref{rmk:entropy-mixing} it is sufficient for us to establish that 
\begin{align}\label{jj44}
\ent_\nu(P_{\textsc{local}}f)\leq (1-\d/n)\ent_\nu(f)
\end{align}
for all  functions $f:\Joint \mapsto \bbR_+$ such that $\nu[f]=1$. Here $\d>0$ is a constant independent of $n$ and the admissible boundary condition. %, and $n=|V|$. 

By the convexity of the function $x\log x$, reasoning as in \eqref{convu1}, we can write
\begin{align*}%\label{jj45}
\ent_\nu(P_{\textsc{local}}f)\leq \frac1{2|V|}\sum_{v\in V}\ent_\nu(Q_vf)
+\frac1{2|\bbE|}\sum_{e\in \bbE}\ent_\nu(W_ef). 
\end{align*}

Let 
$\sigma_{V\setminus v}$ (resp., $A_{\EdgeSet \setminus e}$) denote the spin (resp., edge) configuration excluding~$v$ (resp., excluding~$e$).
Since $Q_vf(\si,A)=\nu(f\tc\si_{V\setminus\{v\}},A)$ and $W_e f(\si,A)=\nu(f\tc\si,A_{\EdgeSet\setminus{e}})$, 
from the decompositions of entropy in~\eqref{eq:total:entEdges} 
and~\eqref{eq:total:entSpins} we obtain
\begin{align*}
\ent_\nu(Q_vf) &=\ent_\nu(f) - \nu\left[\ent_\nu(f\tc\si_{V\setminus\{v\}},A)\right];\\
\ent_\nu(W_ef)&=\ent_\nu(f)- \nu\left[\ent_\nu(f\tc\si,A_{\bbE\setminus{e}})\right].
\end{align*}
Therefore,
\begin{align*}
%\label{jj45}
\ent_\nu(P_{\textsc{local}}f)\leq \ent_\nu(f) - \frac1{2|V|}\sum_{v\in V}\nu\left[\ent_\nu(f\tc\si_{V\setminus\{v\}},A)\right]
-\frac1{2|\bbE|}\sum_{e\in \bbE}\nu\left[\ent_\nu(f\tc\si,A_{\bbE\setminus{e}})\right].
\end{align*}
%+\frac1{2|\bbE|}\sum_{e\in \bbE}\nu\ent(W_ef). 
%\end{align}
%\frac1{2|V|}\sum_{x\in V}\nu\left[\ent(f\tc\si_{V\setminus\{x\}},\eta)\right]
%+\frac1{2|\bbE|}\sum_{e\in \bbE}\nu\left[\ent(f\tc\si,\eta_{\bbE\setminus{e}})\right]
%
We show next that there exists a constant $C > 0$
%\antonio{check if need $C > 1$}
such that
\begin{align}\label{swent106}
\ent_\nu(f)\leq C
\sum_{v\in V}\nu\left[\ent_\nu(f\tc\si_{V\setminus\{v\}},A)\right]
+C\sum_{e\in \bbE}\nu\left[\ent_\nu(f\tc\si,A_{\bbE\setminus{e}})\right].
\end{align}
The desired estimate \eqref{jj44} then follows from the fact that $|\bbE|=O(|V|)=O(n)$. 

To establish~\eqref{swent106}, note that by Theorem~\ref{thm:cp}
we know that SSM implies approximate even/odd factorization.
Then, from Lemma \ref{lem:conc} we know that, for some constant $C_1>0$, 
\begin{align}\label{swent101}
\ent_\nu(f) \leq C_1 \nu\left[\ent_\nu (f\tc \si_E) + \ent_\nu (f\tc \si_O)\right],
\end{align}
where we recall that $E \subset V$ and $O  \subset V$ are the even and odd sub-lattices, respectively.
Since $\nu(\cdot\tc \si_O)=\otimes_{v\in E}\,\nu_v(\cdot\tc \si_O)$ (see~\eqref{eo1}), the standard tensorization of entropy for  product measures (see, e.g.,~\cite{ABM}) implies
\begin{align}\label{swent102}
\ent_\nu (f\tc \si_O)\leq \sum_{v\in E}\nu\left[\ent_v(f\tc\si_O)\tc\si_O\right],
\end{align}
where as before we use $\ent_v(\cdot\tc\si_O)$ for the entropy with respect to $\nu_v(\cdot\tc \si_O)$. 
From Lemma \ref{lem:tensorx} we see that 
\begin{align}\label{swent103}
\ent_\nu (f\tc \si_O)\leq C_1\sum_{v\in E}\nu\left[\ent_v(f\tc\si_O,A)+\ent_v(f\tc\si)\tc\si_O\right],
\end{align}
for some constant $C_1>0$. %Here, as above, $\ent_x(f\tc\si_O,A)$ is the entropy w.r.t.\ $\nu_x(\cdot\tc \si_O,A)$ and $\ent_x(f\tc\si)$ is the entropy w.r.t.\ $\nu_x(\cdot\tc \si)$. 

For $v\in E$, the distribution of the spin $\sigma_v$ given $\si_O$ and $A$ is the same as the distribution of $\si_v$ given $\si_{V\setminus\{v\}}$ and $A$; that is, 
$\nu_v(\cdot\tc\si_O,A)= \nu(\cdot\tc\si_{V\setminus\{v\}},A)$. Therefore we may write
\begin{align}\label{swent1a3}
\ent_v(f\tc\si_O,A)= \ent_\nu(f\tc\si_{V\setminus\{v\}},A).
\end{align}
Let us also observe that, for every $v\in E$,
\begin{align}\label{swent104}
\ent_v(f\tc\si)\leq \sum_{w \in V:\,\{w,v\}\in\EdgeSet}\ent_\nu(f\tc\si,A_{\bbE\setminus{\{w,v\}}}).
\end{align}
Indeed, $\nu_v(\cdot\tc \si)$ is a product measure on $A_v=\{A_{vw},\,\{w,v\}\in\EdgeSet\}$, and the entropy appearing on the right hand side above is simply the entropy of $A_{vw}$ once every other spin or edge variable has been fixed. Therefore, \eqref{swent104} is again the standard tensorization statement for product measures. In conclusion, we have shown that
\begin{align}\label{swent105}
\ent_\nu (f\tc \si_O)\leq C_1\sum_{v\in E}\nu\left[\ent_\nu(f\tc\si_{V\setminus\{v\}},A)\tc\si_O\right]
+C_1\sum_{e\in \bbE}\nu\left[\ent_\nu(f\tc\si,A_{\bbE\setminus{e}})\tc\si_O\right],
\end{align}
where the second sum is now over the set of all edges~$\bbE$. The same estimate can be obtained with the role of even and odd sites reversed:
\begin{align}\label{swent1050}
\ent_\nu (f\tc \si_E)\leq C_1\sum_{v\in O}\nu\left[\ent_\nu(f\tc\si_{V\setminus\{v\}},A)\tc\si_E\right]
+C_1\sum_{e\in \bbE}\nu\left[\ent_\nu(f\tc\si,A_{\bbE\setminus{e}})\tc\si_E\right].
\end{align}
Taking expectations with respect to $\nu$
and summing~\eqref{swent105} and~\eqref{swent1050}, 
from \eqref{swent101} we obtain~\eqref{swent106} which finishes the proof.
\end{proof}

\begin{remark}
	\label{rmk:prj}
By taking $f$ that depends only on spins, we derive as a corollary of Theorem~\ref{cor:loc:intro} entropy decay for the Potts model Glauber dynamics (up to a constant laziness factor to account for the probability of a site being isolated); similarly, taking $f$ that depends only on edges, we obtain entropy decay for the corresponding Glauber dynamics for the random-cluster model. 
While entropy decay was previously known for the Potts Glauber dynamics under SSM~\cite{Cesi}, the same statement for the random-cluster dynamics appears to be a new result.  (Note in particular that entropy decay does not follow from the mixing time results for this dynamics in~\cite{BSz2}.)
\end{remark}

We briefly mention several other consequences of our results.
First, we note that Theorem~\ref{cor:loc:intro} can be extended to the more general case of (weighted) block dynamics for the joint space. In addition, since the ``edge marginal'' of the joint measure $\nu$ is the random-cluster distribution, we can show that the mixing time of the SW dynamics for the random-cluster model, which alternates between edge and joint configurations, is also $O(\log n)$ for all integer $q \ge 2$ provided SSM holds; see Section~\ref{sec:rc} for more details about our results for random-cluster dynamics. 

Finally, we note that while our results in the joint space are all stated for the free boundary
condition, they actually extend to the more general class of admissible boundary
conditions; see Definition~\ref{def:admissible} in 
Section~\ref{sec:factorization} for the definition of this class.

\section{Entropy decay for the alternating scan dynamics}\label{sec:altscan}

The fact that classical log-Sobolev inequalities do not capture the mixing time of the SW dynamics seems to be a more general phenomenon afflicting non-local Markov chains. These chains are 
popular due to their presumed speed-up over Glauber dynamics and to the fact that their updates
can be parallelized.  With our techniques, we are able to establish entropy contraction for
another standard non-local Markov chain for the Potts model known as the \textit{alternating scan dynamics}. This chain, which
is used in practice to sample from the Gibbs distribution 
and has received some theoretical attention \cite{BCSV,PW,GKZ},
also has a ``bad'' log-Sobolev constant, but we can show that entropy decays at a constant rate over the steps of the chain.

In one step of the alternating scan dynamics, all the {\it even\/} vertices  
(i.e., those with even coordinate sum) are updated simultaneously 
with a new configuration distributed according to the conditional measure on the
even sub-lattice given the configuration on the odd sub-lattice; the process is then repeated for the odd vertices. The key observation is that the conditional distributions on the even and odd sub-lattices are product distributions, which makes this chain particularly amenable to parallelization and thus attractive in applications.

Let $P_E$ be the stochastic matrix corresponding to the update of the even sites conditional on the spins of the odd sites, and define $P_O$ analogously for the odd sites.
The alternating scan dynamics is the Markov chain with transition matrix $S_{EO}=P_EP_O$
(or, equivalently, $S_{OE}=P_OP_E$).  Note that $P_E,P_O$ do not commute  
(unless $\b=0$), so $S_{EO}$ and $S_{OE}$ are not reversible with respect to their stationary measure~$\mu$.
In~\cite{BCSV} it was shown that whenever SSM holds, the mixing time of the reversibilized
version $S_{OEO}=P_OP_EP_O$ of this dynamics is~$O(n)$.  Here we prove a much
tighter bound by showing that the alternating scan dynamics itself contracts entropy at a constant rate.

\begin{theorem}\label{th:alt-intro}
	Let $P$ be either of the stochastic matrices $S_{EO}$ or $S_{OE}$. 
	SSM implies that there exists a constant $\d>0$ such that,
	for
	all boundary conditions and  
	all functions $f:\Omega \mapsto \bbR_+$, 
	\begin{align*}
	\ent_\mu( P f) \leq (1-\d) \ent_\mu(f).
	\end{align*}
	In particular, 
	the Markov chain with transition matrix $P$ satisfies $\tmix(P) =O(\log n)$.
\end{theorem}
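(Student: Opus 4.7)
The plan is to follow the same template used in the proof of Theorem~\ref{th:swtheorem}: reduce the non-reversible product chain to the reversible average $Q := \tfrac12(P_E+P_O)$, prove entropy contraction for $Q$, and then lift that contraction to $S_{EO}$ and $S_{OE}$ via Lemma~\ref{lemma:scan}. This application of Lemma~\ref{lemma:scan} is legitimate because $P_E$ and $P_O$ are block heat-bath updates, hence reversible with respect to $\mu$ and idempotent: $P_E^2 = P_E = P_E^*$ and $P_O^2 = P_O = P_O^*$.

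To prove the entropy contraction for $Q$, I would proceed exactly as in~\eqref{convj1}--\eqref{convj2}. For $f \ge 0$ with $\mu[f] = 1$, convexity of $x\log x$ together with $\mu[P_E f] = \mu[P_O f] = 1$ gives
\[
\ent_\mu(Qf) \,\le\, \tfrac12\bigl[\ent_\mu(P_E f) + \ent_\mu(P_O f)\bigr].
\]
Since $P_E f(\sigma) = \mu[f\tc\sigma_O]$ and $P_O f(\sigma) = \mu[f\tc\sigma_E]$, the standard decomposition of entropy~\eqref{eq:total:ent-dec} yields $\ent_\mu(P_E f) = \ent_\mu(f) - \mu[\ent_\mu(f\tc\sigma_O)]$ and similarly for $P_O f$, so that
\[
\ent_\mu(Q f) \,\le\, \ent_\mu(f) - \tfrac12\,\mu\bigl[\ent_\mu(f\tc\sigma_E) + \ent_\mu(f\tc\sigma_O)\bigr].
\]
SSM implies approximate even/odd factorization by Theorem~\ref{thm:cp} (from~\cite{CP}), so there exists $\delta_0>0$ (independent of $n$ and the boundary condition) with $\mu[\ent_\mu(f\tc\sigma_E) + \ent_\mu(f\tc\sigma_O)] \ge \delta_0\,\ent_\mu(f)$. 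Hence $\ent_\mu(Qf) \le (1-\delta_0/2)\ent_\mu(f)$.

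Applying Lemma~\ref{lemma:scan} with $(S,S') = (P_E,P_O)$ and with this $Q$ immediately delivers the claimed contraction $\ent_\mu(Pf)\le(1-\delta)\ent_\mu(f)$ for $P=S_{EO}$ and for $P=S_{OE}$, with $\delta = \delta_0/2$. For the mixing time, even though $S_{EO}$ is not reversible, one has $S_{EO}^* = (P_EP_O)^* = P_O P_E = S_{OE}$ (and symmetrically $S_{OE}^* = S_{EO}$); the contraction just established for $S_{OE} = S_{EO}^*$ is, by Remark~\ref{rmk:entropy-mixing}, precisely the relative entropy decay of $S_{EO}$, so Lemma~\ref{lem:mix} gives $\tmix(S_{EO}) = O(\log n)$, and likewise for $S_{OE}$.

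The main obstacle in an approach of this type is usually the passage from the reversible symmetrized chain $Q$ to the non-reversible product chain, since entropy does not behave as tractably as variance under composition; however, this is exactly what Lemma~\ref{lemma:scan} (which only requires the idempotent/reversible structure of $P_E$ and $P_O$) is designed to handle, so no further work beyond invoking it is needed.
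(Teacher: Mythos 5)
Your proof is correct and follows essentially the same route as the paper: bound $\ent_\mu(Qf)$ for $Q=\tfrac12(P_E+P_O)$ via convexity and the conditional entropy decomposition, apply even/odd factorization (Theorem~\ref{thm:cp}) to get contraction for $Q$, and then lift to $S_{EO}$ and $S_{OE}$ via Lemma~\ref{lemma:scan} using idempotence of $P_E,P_O$ and the adjoint relation $S_{EO}^*=S_{OE}$.
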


We note that
the alternating scan dynamics is a version of so-called {\it systematic
	scan\/} dynamics, a variant of Glauber dynamics in which vertices are updated in some
fixed, rather than random, ordering.  Due to their widespread use in practice,
the effect of decay of correlations properties on the speed of convergence of this class
of dynamics has been widely studied; see, e.g.~\cite{DGJ-dob,Hayes,DGJ-norm}. 
Recently in~\cite{CP}, a result analogous to Theorem~\ref{th:alt-intro} was obtained
for the simpler reversible dynamics with transition matrix $\frac{P_E+P_O}{2}$.

%
%Recall that $P_E$ is the transition matrix corresponding to equilibriation on the even sites conditional on the spins of the odd sites, and $P_O$ is the same for the odd sites. 
%We consider the {\em alternating scan} chains defined by the stochastic matrices $S_{EO}=P_EP_O$ and $S_{OE}=P_OP_E$. Note that $P_E$ and $P_O$ do not commute  
%(unless $\b=0$), and thus $S_{EO}$, $S_{OE}$ are not reversible with respect to $\mu$.
%Non-reversibility is often a serious obstacle 
%for the analysis of Markov chains, and 
%while in practice 
%$S_{EO}$ and $S_{OE}$ are the most widely used versions of alternating scan dynamics,
%previous theoretical analyses had focused on simpler reversible versions, namely
%the multiplicative reversibilizations $P_OP_EP_O$ and $P_EP_OP_E$~\cite{BCSV,GKZ},
%and the additive reversiblization $\frac{P_E+P_O}{2}$~\cite{CP}.

\begin{proof}[Proof of Theorem~\ref{th:alt-intro}]
	
%	Note that $P_E$ and $P_O$ are the orthogonal projectors in $L^2(\Omega,\mu)$ such that $P_E f = \mu_Ef =\mu(f\tc\si_O)$ and $P_O f = \mu_Of =\mu(f\tc\si_E)$. In particular,
%	$$
%	P_E=P_E^*=P_E^2\,,\qquad P_O=P_O^*=P_O^2.
%	$$
%\begin{align}\label{peo}
%Q_{EO}=\frac12(P_E+P_O)
%\end{align}
	We will show that the discrete entropy contraction in \eqref{relent3} holds for $S_{EO}$ and $S_{OE}$ for any positive function $f:\Omega \mapsto \bbR$ such that $\mu[f]=1$.  
	The mixing time bounds then follow from Lemma \ref{lem:mix}
	and the fact that $S_{EO}^*=S_{OE}$ and $S_{OE}^*=S_{EO}$.
	In view of Lemma~\ref{lemma:scan}, it is sufficient for us to establish \eqref{relent3} for $P:=\frac{P_E+P_O}{2}$. 	
	The convexity of the function $x\log x$ implies the pointwise bound
	\begin{align*}
	%\label{conv1}
	(Pf)\log (Pf)\leq \frac12 (P_Ef)\log (P_E f) + \frac12 (P_Of)\log (P_O f).
	\end{align*}
	From this and the fact that $\mu[Pf] = 1$ we get
	\begin{align}\label{conve1}
	\ent_\mu (Pf) = \mu[(Pf) \log (Pf)] \leq \frac12\left[ \ent_\mu(P_Ef)+\ent_\mu(P_Of)\right].
	\end{align}
	Note that $P_E$ and $P_O$ are the orthogonal projections in $L^2(\Omega,\mu)$ such that $P_E f= \mu(f\tc\si_O)$ and $P_O f = \mu(f\tc\si_E)$. Therefore,	
	\begin{align*}
	%\label{conv10}
	\ent_\mu(f) &= \ent_\mu (\mu(f\tc\si_O))+\mu\left[\ent_\mu(f\tc \si_O)\right] =  \ent_\mu (P_E f)+\mu\left[\ent_\mu(f\tc \si_O)\right]; \\
	\ent_\mu(f) &= \ent_\mu (\mu(f\tc\si_E))+\mu\left[\ent_\mu(f\tc \si_E)\right] = \ent_\mu (P_Of)+\mu\left[\ent_\mu(f\tc \si_E)\right],
	\end{align*}
	and we see that \eqref{conve1} is equivalent to 
	\begin{align*}
	\ent_\mu(Pf) \leq \ent_\mu(f) - \frac12\mu\left[\ent_\mu(f\tc \si_O)+ \ent_\mu(f\tc \si_E)\right].
	\end{align*}
	We may now apply Theorem~\ref{thm:cp} which implies that, when SSM holds,
	\begin{align}\label{conv3}
	\ent_\mu (Pf) \leq (1-\d)\ent_\mu(f) ,
	\end{align}
	for a suitable constant $\delta \in (0,1)$. 	This establishes~\eqref{relent3} for $P = \frac{P_E+P_O}{2}$.
	Since $P_E^2 = P_E = P_E^*$ and $P_O^2 = P_O = P_O^*$,
	and $(P_EP_O)^*=P_OP_E$, $(P_OP_E)^*=P_EP_O$, 
	the remainder of the result follows from Lemma~\ref{lemma:scan}.
\end{proof}

\section{Random-cluster dynamics}
\label{sec:rc}

In this section we study the implications of
our results for the dynamics of the 
\emph{random-cluster model} for both the high and low temperatures regimes.
This allows us to derive Theorem~\ref{cor:sw-lt:intro} from the introduction using a comparison mechanism we establish in~Section~\ref{subsec:comparison}.

The random-cluster model on $G = (V,\EdgeSet)$ with parameters $p \in (0, 1)$ and $q > 0$ assigns to each $A \subseteq \EdgeSet$ a probability
\begin{equation}\label{eq:rc}
\r(A) = \r_{G,p,q}(A)=\frac1{Z_{\textsc{rc}}}\,p^{|A|}(1-p)^{|\bbE|-|A|}q^{c(A)},
\end{equation}
where $c(A)$ is the number of connected components in $(V,A)$
and $Z_{\textsc{rc}}$ is the corresponding partition function.
The random-cluster model was first introduced by Fortuin and Kasteleyn~\cite{FK} as a unifying framework for random graphs, spin systems  and electrical networks; see the book~\cite{Grimmett} for extensive background.

A {\it boundary condition\/}
for the random-cluster model is a partition $\xi = \{\xi_1,\xi_2,\dots\}$ 
of the internal boundary $\partial V$ of $V$ such that all vertices in each $\xi_i$ are 
constrained to be in the same connected component of any configuration~$A$.
(We can think of the vertices in $\xi_i$ as being connected through a configuration in $V^c$.)
These connections are considered in the counting of the connected components in~\eqref{eq:rc};  i.e., $c(A)$ becomes $c(A,\xi)$ (see, e.g.,~\cite{BGV,Grimmett}).

The distribution $\r$ with a \emph{free} boundary condition (i.e., every element of $\xi$ is a single vertex) corresponds to the edge marginal of the joint measure also with free boundary condition~\eqref{jointnu-intro}; that is, 
$\r(A)=\sum_{\si: A  \subseteq M(\s)} \nu(\si,A)$
and $Z_{\textsc{rc}} = Z_{\textsc{j}}$; see, e.g.,~\cite{ES,Grimmett}. 
The \emph{wired} boundary condition 
corresponds to the case when all vertices of $\partial V$ are connected by the boundary condition (i.e., $\xi = \{\partial V\}$).
More generally, if $(\psi,\phi)$ is an admissible boundary condition for the joint space (see Definition~\ref{def:admissible}), 
we have
\begin{equation}\label{rcsi}
\r^{\psi,\phi}(A)= \sum_{\si: A  \subseteq M(\s)} \nu^{\psi,\phi}(\si,A) = 
\frac1{Z^{\psi,\phi}}\,p^{|A|}(1-p)^{|\bbE|-|A|}q^{c(A)-c_0(A)} \IND(A\sim\psi)\IND(A\sim\phi),
\end{equation}
where
$A\sim\psi$ means that $A$ does not connect vertices of $V_0$ with different colors in $\psi$,
$A \sim \phi$ that $A$ and $\phi$ agree on the edges in $\bbE_0$ and
$c_0(A)$ denotes the number of connected components that intersect $V_0 \subseteq \partial V$; see Figure~\ref{fig7.1} for some admissible boundary conditions.

\begin{figure}
	\center
	
	\begin{subfigure}                           %%%%%%%%%% Subfigure 1 %%%%%%%%%%
		
		\begin{tikzpicture}[scale=2]
		
		%%%%Internal edges%%%%

		%\draw  (1/4,1/4) -- (1/4,6/4);
		\draw  (2/4,2/4) -- (2/4,6/4);
		\draw  (3/4,2/4) -- (3/4,6/4);
		\draw  (4/4,2/4) -- (4/4,6/4);
		\draw  (5/4,2/4) -- (5/4,6/4);
		\draw  (6/4,2/4) -- (6/4,6/4);
		%\draw  (7/4,1/4) -- (7/4,6/4);
		%\draw  (1/4,1/4) -- (6/4,1/4);
		\draw  (2/4,2/4) -- (6/4,2/4);
		\draw  (2/4,3/4) -- (6/4,3/4);
		\draw  (2/4,4/4) -- (6/4,4/4);
		\draw  (2/4,5/4) -- (6/4,5/4);
		\draw  (2/4,6/4) -- (6/4,6/4);
		%\draw  (1/4,7/4) -- (7/4,7/4);

		%%%Grid%%%
		
		%Vertical lines
		
		%\draw [fill,opacity=0.2] (0,-1/4) -- (0,11/4); 
		%\draw [fill,opacity=0.2] (1/4,-1/4) -- (1/4,11/4);
		%\draw [fill,opacity=0.2] (1/2,-1/4) -- (1/2,11/4);
		%\draw [fill,opacity=0.2] (3/4,-1/4) -- (3/4,11/4);
		%\draw [fill,opacity=0.2] (1,-1/4) -- (1,11/4);
		%\draw [fill,opacity=0.2] (5/4,-1/4) -- (5/4,11/4);
		%\draw [fill,opacity=0.2] (3/2,-1/4) -- (3/2,11/4);
		%\draw [fill,opacity=0.2] (7/4,-1/4) -- (7/4,11/4);
		%\draw [fill,opacity=0.2] (2,-1/4) -- (2,11/4);

		% Horizontal lines
		
		%\draw [fill,opacity=0.2] (-1/4,0) -- (9/4,0);
		%\draw [fill,opacity=0.2] (-1/4,1/4) -- (9/4,1/4);
		%\draw [fill,opacity=0.2] (-1/4,1/2) -- (9/4,1/2);
		%\draw [fill,opacity=0.2] (-1/4,3/4) -- (9/4,3/4);
		%\draw [fill,opacity=0.2] (-1/4,1) -- (9/4,1);
		%\draw [fill,opacity=0.2] (-1/4,5/4) -- (9/4,5/4);
		%\draw [fill,opacity=0.2] (-1/4,3/2) -- (9/4,3/2);
		%\draw [fill,opacity=0.2] (-1/4,7/4) -- (9/4,7/4);
		%\draw [fill,opacity=0.2] (-1/4,2) -- (9/4,2);
		%\draw [fill,opacity=0.2] (-1/4,9/4) -- (9/4,9/4);
		%\draw [fill,opacity=0.2] (-1/4,10/4) -- (9/4,10/4);
		
		%%%vertices%%%
		
		\draw[fill=red] (1/4,1/4) circle [radius=0.03];
		\draw[fill=red] (2/4,1/4) circle [radius=0.03];
		\draw[fill=red] (3/4,1/4) circle [radius=0.03];
		\draw[fill=red] (4/4,1/4) circle [radius=0.03];
		\draw[fill=red] (5/4,1/4) circle [radius=0.03];
		\draw[fill=red] (6/4,1/4) circle [radius=0.03];
		\draw[fill=red] (7/4,1/4) circle [radius=0.03];
		
		\draw[fill=red] (1/4,1/2) circle [radius=0.03];
		\draw[fill] (2/4,1/2) circle [radius=0.03];
		\draw[fill] (3/4,1/2) circle [radius=0.03];
		\draw[fill] (4/4,1/2) circle [radius=0.03];
		\draw[fill] (5/4,1/2) circle [radius=0.03];
		\draw[fill] (6/4,1/2) circle [radius=0.03];
		\draw[fill=red] (7/4,1/2) circle [radius=0.03];
		
		\draw[fill=red] (1/4,3/4) circle [radius=0.03];
		\draw[fill] (2/4,3/4) circle [radius=0.03];
		\draw[fill] (3/4,3/4) circle [radius=0.03];
		\draw[fill] (4/4,3/4) circle [radius=0.03];
		\draw[fill] (5/4,3/4) circle [radius=0.03];
		\draw[fill] (6/4,3/4) circle [radius=0.03];
		\draw[fill=red] (7/4,3/4) circle [radius=0.03];
		
		\draw[fill=red] (1/4,1) circle [radius=0.03];
		\draw[fill] (2/4,1) circle [radius=0.03];
		\draw[fill] (3/4,1) circle [radius=0.03];
		\draw[fill] (4/4,1) circle [radius=0.03];
		\draw[fill] (5/4,1) circle [radius=0.03];
		\draw[fill] (6/4,1) circle [radius=0.03];
		\draw[fill=red] (7/4,1) circle [radius=0.03];
		
		\draw[fill=red] (1/4,5/4) circle [radius=0.03];
		\draw[fill] (2/4,5/4) circle [radius=0.03];
		\draw[fill] (3/4,5/4) circle [radius=0.03];
		\draw[fill] (4/4,5/4) circle [radius=0.03];
		\draw[fill] (5/4,5/4) circle [radius=0.03];
		\draw[fill] (6/4,5/4) circle [radius=0.03];
		\draw[fill=red] (7/4,5/4) circle [radius=0.03];
		
		\draw[fill=red] (1/4,6/4) circle [radius=0.03];
		\draw[fill] (2/4,6/4) circle [radius=0.03];
		\draw[fill] (3/4,6/4) circle [radius=0.03];
		\draw[fill] (4/4,6/4) circle [radius=0.03];
		\draw[fill] (5/4,6/4) circle [radius=0.03];
		\draw[fill] (6/4,6/4) circle [radius=0.03];
		\draw[fill=red] (7/4,6/4) circle [radius=0.03];
		
		\draw[fill=red] (1/4,7/4) circle [radius=0.03];
		\draw[fill=red] (2/4,7/4) circle [radius=0.03];
		\draw[fill=red] (3/4,7/4) circle [radius=0.03];
		\draw[fill=red] (4/4,7/4) circle [radius=0.03];
		\draw[fill=red] (5/4,7/4) circle [radius=0.03];
		\draw[fill=red] (6/4,7/4) circle [radius=0.03];
		\draw[fill=red] (7/4,7/4) circle [radius=0.03];
		
		\node at (1,0) {\small{(a)}};
		
		\end{tikzpicture}
	\end{subfigure}
	\hfill                                        
	\begin{subfigure}                           %%%%%%%%%% Subfigure 2 %%%%%%%%%%
		
		\begin{tikzpicture}[scale=2]
		
		%%%%Internal edges%%%%

		\draw  (1/4,1/4) -- (1/4,7/4);
		\draw  (2/4,1/4) -- (2/4,7/4);
		\draw  (3/4,1/4) -- (3/4,7/4);
		\draw  (4/4,1/4) -- (4/4,7/4);
		\draw  (5/4,1/4) -- (5/4,7/4);
		\draw  (6/4,1/4) -- (6/4,7/4);
		\draw  (7/4,1/4) -- (7/4,7/4);
		\draw  (1/4,1/4) -- (7/4,1/4);
		\draw  (1/4,2/4) -- (7/4,2/4);
		\draw  (1/4,3/4) -- (7/4,3/4);
		\draw  (1/4,4/4) -- (7/4,4/4);
		\draw  (1/4,5/4) -- (7/4,5/4);
		\draw  (1/4,6/4) -- (7/4,6/4);
		\draw  (1/4,7/4) -- (7/4,7/4);

		%%%Grid%%%
		
		%Vertical lines
		
		%\draw [fill,opacity=0.2] (0,-1/4) -- (0,11/4); 
		%\draw [fill,opacity=0.2] (1/4,-1/4) -- (1/4,11/4);
		%\draw [fill,opacity=0.2] (1/2,-1/4) -- (1/2,11/4);
		%\draw [fill,opacity=0.2] (3/4,-1/4) -- (3/4,11/4);
		%\draw [fill,opacity=0.2] (1,-1/4) -- (1,11/4);
		%\draw [fill,opacity=0.2] (5/4,-1/4) -- (5/4,11/4);
		%\draw [fill,opacity=0.2] (3/2,-1/4) -- (3/2,11/4);
		%\draw [fill,opacity=0.2] (7/4,-1/4) -- (7/4,11/4);
		%\draw [fill,opacity=0.2] (2,-1/4) -- (2,11/4);

		% Horizontal lines
		
		%\draw [fill,opacity=0.2] (-1/4,0) -- (9/4,0);
		%\draw [fill,opacity=0.2] (-1/4,1/4) -- (9/4,1/4);
		%\draw [fill,opacity=0.2] (-1/4,1/2) -- (9/4,1/2);
		%\draw [fill,opacity=0.2] (-1/4,3/4) -- (9/4,3/4);
		%\draw [fill,opacity=0.2] (-1/4,1) -- (9/4,1);
		%\draw [fill,opacity=0.2] (-1/4,5/4) -- (9/4,5/4);
		%\draw [fill,opacity=0.2] (-1/4,3/2) -- (9/4,3/2);
		%\draw [fill,opacity=0.2] (-1/4,7/4) -- (9/4,7/4);
		%\draw [fill,opacity=0.2] (-1/4,2) -- (9/4,2);
		%\draw [fill,opacity=0.2] (-1/4,9/4) -- (9/4,9/4);
		%\draw [fill,opacity=0.2] (-1/4,10/4) -- (9/4,10/4);
		
		%%%vertices%%%
		
		\draw[fill=red] (1/4,1/4) circle [radius=0.03];
		\draw[fill=red] (2/4,1/4) circle [radius=0.03];
		\draw[fill=red] (3/4,1/4) circle [radius=0.03];
		\draw[fill=red] (4/4,1/4) circle [radius=0.03];
		\draw[fill=red] (5/4,1/4) circle [radius=0.03];
		\draw[fill=red] (6/4,1/4) circle [radius=0.03];
		\draw[fill=red] (7/4,1/4) circle [radius=0.03];
		
		\draw[fill=red] (1/4,1/2) circle [radius=0.03];
		\draw[fill] (2/4,1/2) circle [radius=0.03];
		\draw[fill] (3/4,1/2) circle [radius=0.03];
		\draw[fill] (4/4,1/2) circle [radius=0.03];
		\draw[fill] (5/4,1/2) circle [radius=0.03];
		\draw[fill] (6/4,1/2) circle [radius=0.03];
		\draw[fill=red] (7/4,1/2) circle [radius=0.03];
		
		\draw[fill=red] (1/4,3/4) circle [radius=0.03];
		\draw[fill] (2/4,3/4) circle [radius=0.03];
		\draw[fill] (3/4,3/4) circle [radius=0.03];
		\draw[fill] (4/4,3/4) circle [radius=0.03];
		\draw[fill] (5/4,3/4) circle [radius=0.03];
		\draw[fill] (6/4,3/4) circle [radius=0.03];
		\draw[fill=red] (7/4,3/4) circle [radius=0.03];
		
		\draw[fill=red] (1/4,1) circle [radius=0.03];
		\draw[fill] (2/4,1) circle [radius=0.03];
		\draw[fill] (3/4,1) circle [radius=0.03];
		\draw[fill] (4/4,1) circle [radius=0.03];
		\draw[fill] (5/4,1) circle [radius=0.03];
		\draw[fill] (6/4,1) circle [radius=0.03];
		\draw[fill=red] (7/4,1) circle [radius=0.03];
		
		\draw[fill=red] (1/4,5/4) circle [radius=0.03];
		\draw[fill] (2/4,5/4) circle [radius=0.03];
		\draw[fill] (3/4,5/4) circle [radius=0.03];
		\draw[fill] (4/4,5/4) circle [radius=0.03];
		\draw[fill] (5/4,5/4) circle [radius=0.03];
		\draw[fill] (6/4,5/4) circle [radius=0.03];
		\draw[fill=red] (7/4,5/4) circle [radius=0.03];
		
		\draw[fill=red] (1/4,6/4) circle [radius=0.03];
		\draw[fill] (2/4,6/4) circle [radius=0.03];
		\draw[fill] (3/4,6/4) circle [radius=0.03];
		\draw[fill] (4/4,6/4) circle [radius=0.03];
		\draw[fill] (5/4,6/4) circle [radius=0.03];
		\draw[fill] (6/4,6/4) circle [radius=0.03];
		\draw[fill=red] (7/4,6/4) circle [radius=0.03];
		
		\draw[fill=red] (1/4,7/4) circle [radius=0.03];
		\draw[fill=red] (2/4,7/4) circle [radius=0.03];
		\draw[fill=red] (3/4,7/4) circle [radius=0.03];
		\draw[fill=red] (4/4,7/4) circle [radius=0.03];
		\draw[fill=red] (5/4,7/4) circle [radius=0.03];
		\draw[fill=red] (6/4,7/4) circle [radius=0.03];
		\draw[fill=red] (7/4,7/4) circle [radius=0.03];
		
			\node at (1,0) {\small{(b)}};
		
		\end{tikzpicture}
	\end{subfigure}
	\hfill                                                 
	\begin{subfigure}                           %%%%%%%%%%%% subfigure 3 %%%%%%%%%%%
		
		\begin{tikzpicture}[scale=2]
		
		%%%Grid%%%
		
		%Vertical lines
		
		%\draw [fill,opacity=0.2] (0,-1/4) -- (0,11/4); 
		%\draw [fill,opacity=0.2] (1/4,-1/4) -- (1/4,11/4);
		%\draw [fill,opacity=0.2] (1/2,-1/4) -- (1/2,11/4);
		%\draw [fill,opacity=0.2] (3/4,-1/4) -- (3/4,11/4);
		%\draw [fill,opacity=0.2] (1,-1/4) -- (1,11/4);
		%\draw [fill,opacity=0.2] (5/4,-1/4) -- (5/4,11/4);
		%\draw [fill,opacity=0.2] (3/2,-1/4) -- (3/2,11/4);
		%\draw [fill,opacity=0.2] (7/4,-1/4) -- (7/4,11/4);
		%\draw [fill,opacity=0.2] (2,-1/4) -- (2,11/4);

		% Horizontal lines
		
		%\draw [fill,opacity=0.2] (-1/4,0) -- (9/4,0);
		%\draw [fill,opacity=0.2] (-1/4,1/4) -- (9/4,1/4);
		%\draw [fill,opacity=0.2] (-1/4,1/2) -- (9/4,1/2);
		%\draw [fill,opacity=0.2] (-1/4,3/4) -- (9/4,3/4);
		%\draw [fill,opacity=0.2] (-1/4,1) -- (9/4,1);
		%\draw [fill,opacity=0.2] (-1/4,5/4) -- (9/4,5/4);
		%\draw [fill,opacity=0.2] (-1/4,3/2) -- (9/4,3/2);
		%\draw [fill,opacity=0.2] (-1/4,7/4) -- (9/4,7/4);
		%\draw [fill,opacity=0.2] (-1/4,2) -- (9/4,2);
		%\draw [fill,opacity=0.2] (-1/4,9/4) -- (9/4,9/4);
		%\draw [fill,opacity=0.2] (-1/4,10/4) -- (9/4,10/4);
		
		%%%%edgesE_0%%%
		
		\draw[very thick, blue] (1/4,1/4) --(1/4,7/4); 
		\draw[very thick, blue] (2/4,1/4)--(2/4,2/4);\draw[very thick, blue] (2/4,6/4)--(2/4,7/4);  
		\draw[very thick, blue] (3/4,1/4)--(3/4,2/4); \draw[very thick, blue] (3/4,6/4)--(3/4,7/4); 
		\draw[very thick, blue] (4/4,1/4)--(4/4,2/4); \draw[very thick, blue] (4/4,6/4)--(4/4,7/4); 
		\draw[very thick, blue] (5/4,1/4)--(5/4,2/4); \draw[very thick, blue] (5/4,6/4)--(5/4,7/4); 
		\draw[very thick, blue] (6/4,1/4)--(6/4,2/4); \draw[very thick, blue] (6/4,6/4)--(6/4,7/4); 
		\draw[very thick, blue] (7/4,1/4)--(7/4,7/4);

		\draw[very thick, blue](1/4,1/4)--(7/4,1/4);
		\draw[very thick, blue](1/4,2/4)--(2/4,2/4);\draw[very thick, blue](6/4,2/4)--(7/4,2/4);
		\draw[very thick, blue](1/4,3/4)--(2/4,3/4);\draw[very thick, blue](6/4,3/4)--(7/4,3/4);
		\draw[very thick, blue](1/4,4/4)--(2/4,4/4);\draw[very thick, blue](6/4,4/4)--(7/4,4/4);
		\draw[very thick, blue](1/4,5/4)--(2/4,5/4);\draw[very thick, blue](6/4,5/4)--(7/4,5/4);
		\draw[very thick, blue](1/4,6/4)--(2/4,6/4);\draw[very thick, blue](6/4,6/4)--(7/4,6/4);
		\draw[very thick, blue](1/4,7/4)--(2/4,7/4);\draw[very thick, blue](6/4,7/4)--(7/4,7/4);
		%\draw[very thick, blue](1/4,8/4)--(2/4,8/4);\draw[very thick, blue](6/4,8/4)--(7/4,8/4);
		\draw[very thick, blue](1/4,7/4)--(7/4,7/4);

		%%%%Internal edges%%%%
		
		\draw  (2/4,2/4) -- (2/4,7/4);
		\draw  (3/4,2/4) -- (3/4,7/4);
		\draw  (4/4,2/4) -- (4/4,7/4);
		\draw  (5/4,2/4) -- (5/4,7/4);
		\draw  (6/4,2/4) -- (6/4,7/4);
		\draw  (2/4,2/4) -- (6/4,2/4);
		\draw  (2/4,3/4) -- (6/4,3/4);
		\draw  (2/4,4/4) -- (6/4,4/4);
		\draw  (2/4,5/4) -- (6/4,5/4);
		\draw  (2/4,6/4) -- (6/4,6/4);

		%%%vertices%%%
		
		\draw[fill=red] (1/4,1/4) circle [radius=0.03];
		\draw[fill=red] (2/4,1/4) circle [radius=0.03];
		\draw[fill=red] (3/4,1/4) circle [radius=0.03];
		\draw[fill=red] (4/4,1/4) circle [radius=0.03];
		\draw[fill=red] (5/4,1/4) circle [radius=0.03];
		\draw[fill=red] (6/4,1/4) circle [radius=0.03];
		\draw[fill=red] (7/4,1/4) circle [radius=0.03];
		
		\draw[fill=red] (1/4,1/2) circle [radius=0.03];
		\draw[fill] (2/4,1/2) circle [radius=0.03];
		\draw[fill] (3/4,1/2) circle [radius=0.03];
		\draw[fill] (4/4,1/2) circle [radius=0.03];
		\draw[fill] (5/4,1/2) circle [radius=0.03];
		\draw[fill] (6/4,1/2) circle [radius=0.03];
		\draw[fill=red] (7/4,1/2) circle [radius=0.03];
		
		\draw[fill=red] (1/4,3/4) circle [radius=0.03];
		\draw[fill] (2/4,3/4) circle [radius=0.03];
		\draw[fill] (3/4,3/4) circle [radius=0.03];
		\draw[fill] (4/4,3/4) circle [radius=0.03];
		\draw[fill] (5/4,3/4) circle [radius=0.03];
		\draw[fill] (6/4,3/4) circle [radius=0.03];
		\draw[fill=red] (7/4,3/4) circle [radius=0.03];
		
		\draw[fill=red] (1/4,1) circle [radius=0.03];
		\draw[fill] (2/4,1) circle [radius=0.03];
		\draw[fill] (3/4,1) circle [radius=0.03];
		\draw[fill] (4/4,1) circle [radius=0.03];
		\draw[fill] (5/4,1) circle [radius=0.03];
		\draw[fill] (6/4,1) circle [radius=0.03];
		\draw[fill=red] (7/4,1) circle [radius=0.03];
		
		\draw[fill=red] (1/4,5/4) circle [radius=0.03];
		\draw[fill] (2/4,5/4) circle [radius=0.03];
		\draw[fill] (3/4,5/4) circle [radius=0.03];
		\draw[fill] (4/4,5/4) circle [radius=0.03];
		\draw[fill] (5/4,5/4) circle [radius=0.03];
		\draw[fill] (6/4,5/4) circle [radius=0.03];
		\draw[fill=red] (7/4,5/4) circle [radius=0.03];
		
		\draw[fill=red] (1/4,6/4) circle [radius=0.03];
		\draw[fill] (2/4,6/4) circle [radius=0.03];
		\draw[fill] (3/4,6/4) circle [radius=0.03];
		\draw[fill] (4/4,6/4) circle [radius=0.03];
		\draw[fill] (5/4,6/4) circle [radius=0.03];
		\draw[fill] (6/4,6/4) circle [radius=0.03];
		\draw[fill=red] (7/4,6/4) circle [radius=0.03];
		
		\draw[fill=red] (1/4,7/4) circle [radius=0.03];
		\draw[fill=red] (2/4,7/4) circle [radius=0.03];
		\draw[fill=red] (3/4,7/4) circle [radius=0.03];
		\draw[fill=red] (4/4,7/4) circle [radius=0.03];
		\draw[fill=red] (5/4,7/4) circle [radius=0.03];
		\draw[fill=red] (6/4,7/4) circle [radius=0.03];
		\draw[fill=red] (7/4,7/4) circle [radius=0.03];

			\node at (1,0) {\small{(c)}};

		\end{tikzpicture}
	\end{subfigure}
	\hfill
	\begin{subfigure}                                   %%%%%%%%%%%% subfigure 4 %%%%%%%%%%%
		
		\begin{tikzpicture}[scale=2]
		
		%%%%edgesE_0%%%
		
		\draw[very thick, blue] (1/4,1/4) -- (1/4,7/4); 
		%\draw[thick, gray] (2/4,1/4)--(2/4,9/4); 
		%\draw[thick, gray] (3/4,1/4)--(3/4,9/4); 
		%\draw[thick, gray] (4/4,1/4)--(4/4,9/4); 
		%\draw[thick, gray] (5/4,1/4)--(5/4,9/4); 
		%\draw[thick, gray] (6/4,1/4)--(6/4,9/4); 
		\draw[very thick, blue] (7/4,1/4)--(7/4,7/4); 
		
		%%%%edgesE_0%%%
		
		\draw[very thick, blue](1/4,1/4)--(7/4,1/4);
		%\draw[thick, gray](1/4,2/4)--(7/4,2/4);
		%\draw[thick, gray](1/4,3/4)--(7/4,3/4);
		%\draw[thick, gray](1/4,4/4)--(7/4,4/4);
		%\draw[thick, gray](1/4,5/4)--(7/4,5/4);
		%\draw[thick, gray](1/4,6/4)--(7/4,6/4);
		%\draw[thick, gray](1/4,7/4)--(7/4,7/4);
		%\draw[thick, gray](1/4,8/4)--(7/4,8/4);
		\draw[very thick, blue](1/4,7/4)--(7/4,7/4);
		
		%%%%Internal edges%%%%
		
		\draw  (2/4,1/4) -- (2/4,7/4);
		\draw  (3/4,1/4) -- (3/4,7/4);
		\draw  (4/4,1/4) -- (4/4,7/4);
		\draw  (5/4,1/4) -- (5/4,7/4);
		\draw  (6/4,1/4) -- (6/4,7/4);
		\draw  (1/4,2/4) -- (7/4,2/4);
		\draw  (1/4,3/4) -- (7/4,3/4);
		\draw  (1/4,4/4) -- (7/4,4/4);
		\draw  (1/4,5/4) -- (7/4,5/4);
		\draw  (1/4,6/4) -- (7/4,6/4);
		\draw  (1/4,7/4) -- (7/4,7/4);

		%%%vertices%%%
		
		\draw[fill=red] (1/4,1/4) circle [radius=0.03];
		\draw[fill=red] (2/4,1/4) circle [radius=0.03];
		\draw[fill=red] (3/4,1/4) circle [radius=0.03];
		\draw[fill=red] (4/4,1/4) circle [radius=0.03];
		\draw[fill=red] (5/4,1/4) circle [radius=0.03];
		\draw[fill=red] (6/4,1/4) circle [radius=0.03];
		\draw[fill=red] (7/4,1/4) circle [radius=0.03];
		
		\draw[fill=red] (1/4,1/2) circle [radius=0.03];
		\draw[fill] (2/4,1/2) circle [radius=0.03];
		\draw[fill] (3/4,1/2) circle [radius=0.03];
		\draw[fill] (4/4,1/2) circle [radius=0.03];
		\draw[fill] (5/4,1/2) circle [radius=0.03];
		\draw[fill] (6/4,1/2) circle [radius=0.03];
		\draw[fill=red] (7/4,1/2) circle [radius=0.03];
		
		\draw[fill=red] (1/4,3/4) circle [radius=0.03];
		\draw[fill] (2/4,3/4) circle [radius=0.03];
		\draw[fill] (3/4,3/4) circle [radius=0.03];
		\draw[fill] (4/4,3/4) circle [radius=0.03];
		\draw[fill] (5/4,3/4) circle [radius=0.03];
		\draw[fill] (6/4,3/4) circle [radius=0.03];
		\draw[fill=red] (7/4,3/4) circle [radius=0.03];
		
		\draw[fill=red] (1/4,1) circle [radius=0.03];
		\draw[fill] (2/4,1) circle [radius=0.03];
		\draw[fill] (3/4,1) circle [radius=0.03];
		\draw[fill] (4/4,1) circle [radius=0.03];
		\draw[fill] (5/4,1) circle [radius=0.03];
		\draw[fill] (6/4,1) circle [radius=0.03];
		\draw[fill=red] (7/4,1) circle [radius=0.03];
		
		\draw[fill=red] (1/4,5/4) circle [radius=0.03];
		\draw[fill] (2/4,5/4) circle [radius=0.03];
		\draw[fill] (3/4,5/4) circle [radius=0.03];
		\draw[fill] (4/4,5/4) circle [radius=0.03];
		\draw[fill] (5/4,5/4) circle [radius=0.03];
		\draw[fill] (6/4,5/4) circle [radius=0.03];
		\draw[fill=red] (7/4,5/4) circle [radius=0.03];
		
		\draw[fill=red] (1/4,6/4) circle [radius=0.03];
		\draw[fill] (2/4,6/4) circle [radius=0.03];
		\draw[fill] (3/4,6/4) circle [radius=0.03];
		\draw[fill] (4/4,6/4) circle [radius=0.03];
		\draw[fill] (5/4,6/4) circle [radius=0.03];
		\draw[fill] (6/4,6/4) circle [radius=0.03];
		\draw[fill=red] (7/4,6/4) circle [radius=0.03];
		
		\draw[fill=red] (1/4,7/4) circle [radius=0.03];
		\draw[fill=red] (2/4,7/4) circle [radius=0.03];
		\draw[fill=red] (3/4,7/4) circle [radius=0.03];
		\draw[fill=red] (4/4,7/4) circle [radius=0.03];
		\draw[fill=red] (5/4,7/4) circle [radius=0.03];
		\draw[fill=red] (6/4,7/4) circle [radius=0.03];
		\draw[fill=red] (7/4,7/4) circle [radius=0.03];
			\node at (1,0) {\small{(d)}};
		\end{tikzpicture}
	
	\end{subfigure}
	
	\caption{\footnotesize{The figures above show four distinct admissible boundary conditions of a square region $V$ of the joint space. The boundary condition in (a)
			is obtained by taking 
			$V_0 = \partial{V}$, 
			$\mathbb{E}_0=\partial{\EdgeSet}$, $\psi = \mathrm{``red"}$ and $\phi=0$.
			The boundary condition in (b) is the spin-only monochromatic boundary condition obtained by taking 
			$V_0 = \partial{V},$ 
			$\mathbb{E}_0=\emptyset$ and $\psi = \mathrm{``red"}$; (c) is obtained by taking $V_0=\partial{V}, \mathbb{E}_0=\partial{\mathbb{E}}$, $\psi = \mathrm{``red"}$ and $\phi=1$ (wired edges are colored blue); note that 
			the vertices incident to $\partial V$ will be $\mathrm{``red"}$ with probability $1$.  
			Boundary condition (d) is obtained by taking $V_0=\partial{V}$, $\mathbb{E}_0 = \partial \bbE\setminus \bbE_1$, $\psi = \mathrm{``red"}$ and $\phi=1.$ 
			The marginal on edges of $\nu^{(\psi,\phi)}$ in (a) is the random-cluster measure on the internal square $V\setminus \partial V$
			with the free boundary condition, 
			while in (b), (c) and (d)
			the edge marginal 
			is a wired random-cluster measure over
			$(V, \EdgeSet )$, $(V\setminus \partial{V}, \mathbb{E}\setminus\partial{\mathbb{E}})$
			and $(V, \EdgeSet \setminus(\partial \bbE\setminus \bbE_1))$, respectively. 
		}
		%			 Indeed, from left to right we obtain the random-cluster measure $\r(\cdot|\sigma_{V_0}),$ the random-cluster measure on $(V\setminus \partial{V}, \mathbb{E}\setminus\partial{\mathbb{E}})$ with wired boundary condition and the random-cluster measure on $(V, \mathbb{E}\setminus \overline{\mathbb{E}})$ with wired boundary condition on $\partial{V}.$
	}
	
	\label{fig7.1}
	
\end{figure}
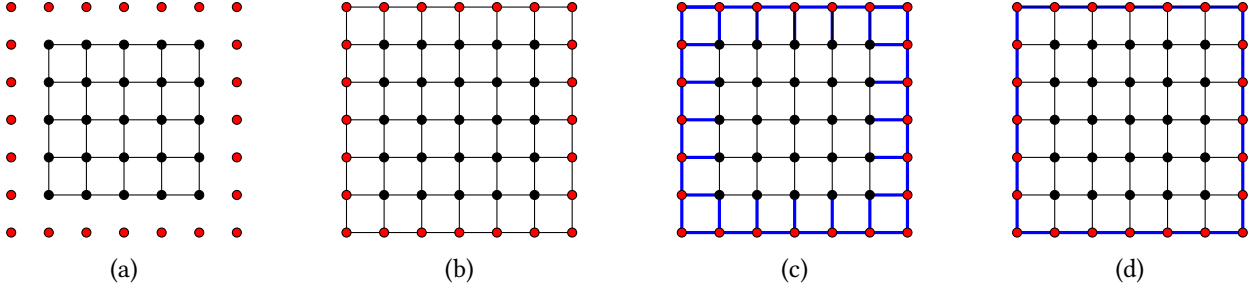

As an example, consider the admissible boundary condition that is obtained by taking $V_0=\partial V$, $\bbE_0=\partial \bbE$, with $\psi = i$ for some $i\in[q]$ (i.e., the monochromatic spin boundary condition) and $\phi =1$; see Figure~\ref{fig7.1}(c).
In this case,
$\r^{\psi,\phi}$ is the random-cluster measure on the cube 
$R =  \{1,\dots,\ell-1\}^d \subset V$
with wired boundary condition.
On the other hand, the marginal on the spins 
is the Potts measure on 
$
R
$
with the ``all $i$'' monochromatic boundary condition. 

Another relevant random-cluster boundary condition is the one obtained by adding to the random-cluster space the edges ``sticking in'' from $\partial V$. Namely, let $\bbE_1\subset \partial \bbE$ be the set of edges with exactly one endpoint in $\partial V$, and take the monochromatic boundary condition $\psi = i$ and the wired edge boundary condition on $\bbE_0=\partial \bbE\setminus \bbE_1$. 
The marginal on edges is the random-cluster distribution measure on $(V,\bbE\setminus\bbE_0)$ with wired boundary condition on $\partial V$, while
the spin marginal is the Potts measure on 
$R$ with  the ``all $i$'' boundary condition on $\partial V$; see Figure~\ref{fig7.1}(d).

Reasoning in this way
one can obtain, as the edge marginal of the joint measure with an admissible boundary condition, any random-cluster measure with a boundary condition where the vertices in the boundary are either free or wired into a {\em single component}, simply by fixing monochromatic spins on that component and fixing an edge configuration realizing the wiring of that component. 
%With a slight abuse of notation, we also call admissible the family of random-cluster boundary conditions inherited form joint admissible boundary conditions.

\bigskip\noindent\textbf{Planar duality.} \ 
A useful tool in two dimensions is planar duality. 
Let $G_\dual = (V_\dual,\EdgeSet_\dual)$
denote the planar dual of $G = (V,\EdgeSet)$, where $V = \{0,\dots,\ell\} \times \{0,\dots,\ell\}$ is a square region of $\bbZ^2$. That is, $V_\dual$ corresponds to the set of faces of $V$, and for each $e \in \EdgeSet$, there is a dual edge $e_\dual \in \EdgeSet_\dual$ connecting the two faces bordering $e$. 
The random-cluster distribution \eqref{eq:rc} satisfies $\rho_{G,p,q}(A) = \rho_{G_\dual,p_\dual,q}(A_\dual)$, where $A_\dual$ is the dual configuration to $A \subseteq \EdgeSet$; i.e., $e_\dual \in A_\dual$ iff $e\not\in A$), and
$$
p_\dual =  \frac{q(1-p)}{q(1-p)+p}.
$$
The self-dual point (i.e, the value of $p$ such that $p=p_d$) corresponds to the critical threshold $p_c(q) = 1-\exp(-\beta_c(q))$.

Since $V_\dual$ is not a subset of $\bbZ^2$, it is convenient to consider the graph $\hat G_\dual = (\hat V_\dual,\hat \EdgeSet_\dual)$ 
with $\hat V_\dual = \{-1,\dots,\ell\} \times \{-1,\dots,\ell\}+(\frac 12,\frac 12)$
and identify all boundary vertices of $\hat V_\dual$ with the vertex of $G_\dual$ corresponding to its external face. Then, $\rho_{G,p,q}^1(A) = \rho_{\hat G_\dual,p_\dual,q}^0(A_\dual)$ and $\rho_{G,p,q}^0(A) = \rho_{\hat G_\dual,p_\dual,q}^1(A_\dual)$, where the $0$ and $1$ superscripts denote the free and and wired boundary conditions respectively (see Section 6.1 in~\cite{Grimmett} for a detailed discussion).

Observe that both random cluster measures $\rho_{\hat G_\dual,p_\dual,q}^1$ and $\rho_{\hat G_\dual,p_\dual,q}^0$ on $\hat G_\dual$ can be obtained as marginals of the joint measure in a square region of $\bbZ^2$ with a monochromatic admissible boundary condition as described above.

%Under a realizable boundary condition $(\si,\phi)$ such that 
%all vertices
%
%, we have $\rho_{G,p,q}^{\psi,\phi}(A) = \rho_{G_\dual,p_\dual,q}^{\psi,\phi_\dual}(A_\dual)$, where $\phi_\dual$

\subsection{SW dynamics for the random-cluster model}

Our first result concerns the SW dynamics for the random-cluster model.
In this variant of the SW dynamics, given an edge configuration $A$, we assign spins to the connected components of $A$ uniformly at random to obtain a joint configuration, and then update the edge configuration by percolating on the monochromatic edges with probability $p$. The transition matrix $\PSWE$ of this chain satisfies
$$
\PSWE(A,B) = \sum_{\sigma: A \subseteq M(\sigma)} \nu(\sigma \tc A) \nu(B \tc \sigma);
$$
$\PSWE$ is reversible with respect to $\rho$; see, e.g.,~\cite{ES,Ullrich1}.
%, that is \rho(A)  \tilde P_{SW}(A,B) = \rho(B)  \tilde P_{SW}(B,A)
The following lemma follows from Theorem~\ref{th:swtheorem}.

\begin{lemma}\label{cor:SWmarg}
	Let $\nu := \nu^{\psi,\phi}$ be the joint distribution with an admissible boundary condition $(\psi,\phi)$.
	If $q$ and $\beta = \ln(\frac{1}{1-p})$ are such that SSM holds, 
	then
	%both 
	the SW dynamics on random-cluster configurations 
	%and the SW dynamics on spins, 
	with boundary conditions inherited from $(\psi,\phi)$ satisfies the discrete time entropy decay with rate $\d$, and its mixing time is bounded by $O(\log n)$.
\end{lemma}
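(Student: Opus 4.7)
The plan is to derive the entropy contraction for $\PSWE$ by \emph{projecting} the entropy contraction already proven for the joint $KT$ dynamics in Theorem~\ref{th:swtheorem}. Concretely, I will show that if $g : \{0,1\}^\EdgeSet \to \bbR_+$ is any function on random-cluster configurations and $\hat g(\si,A) := g(A)$ is its trivial lift to $\Joint$, then $KT \hat g$ depends only on $A$ and agrees with $\PSWE g$ on edges. From there, the entropy contraction for $\PSWE$ with respect to $\rho := \rho^{\psi,\phi}$ is immediate from the joint-space contraction, and the mixing time bound follows from Lemma~\ref{lem:mix} (using reversibility of $\PSWE$ with respect to $\rho$ as in Remark~\ref{rmk:entropy-mixing}).

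First, I would compute $KT$ on an edge-only function. From the formulas $K((\si,A),(\tau,A')) = \IND(A = A')\,\nu(\tau\tc A)$ and $T((\tau,A),(\tau',B)) = \IND(\tau = \tau')\,\nu(B\tc\tau)$, a direct expansion gives
\[
KT\hat g (\si,A) = \sum_{\tau} \nu(\tau\tc A)\sum_{B} \nu(B\tc\tau)\,g(B) = \sum_{\tau \,:\, A \subseteq M(\tau)} \nu(\tau\tc A)\,\nu[g\tc\tau],
\]
which does not depend on $\si$ and coincides with $(\PSWE g)(A)$ by the formula for $\PSWE$ stated just before the lemma. Since $\hat g$ and $KT\hat g$ are both functions of edges only, the identity $\ent_\nu(\hat g) = \ent_{\rho}(g)$ holds (and likewise for $KT\hat g$ vs.\ $\PSWE g$), because integrating out the spins under $\nu$ reproduces the marginal $\rho$.

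Now Theorem~\ref{th:swtheorem} (whose hypotheses under admissible boundary conditions are precisely what is assumed here, via Lemma~\ref{lemma:main-intro-ad} and Theorem~\ref{thm:cp}) provides a constant $\d>0$ such that $\ent_\nu(KT f) \leq (1-\d)\ent_\nu(f)$ for every nonnegative $f$ on $\Joint$. Applying this inequality to $f = \hat g$ yields
\[
\ent_\rho(\PSWE g) = \ent_\nu(KT\hat g) \leq (1-\d)\,\ent_\nu(\hat g) = (1-\d)\,\ent_\rho(g).
\]
Because $\PSWE$ is reversible with respect to $\rho$ (as noted above the lemma), Remark~\ref{rmk:entropy-mixing} converts this into discrete-time relative entropy decay with rate $\d$, and Lemma~\ref{lem:mix} gives $\tmix(\PSWE) \leq 1 + \d^{-1}[\log 8 + \log\log(1/\rho_*)] = O(\log n)$, using that $\log(1/\rho_*) = O(n)$ for random-cluster distributions on $n$-vertex graphs with bounded degree.

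I do not anticipate a real obstacle here: the lift $g \mapsto \hat g$ is the canonical device for passing entropy inequalities from a joint measure to a marginal, and the only thing to verify is that the $KT$ step on the joint space correctly emulates one step of $\PSWE$. The one point worth double-checking is that the admissible boundary condition $(\psi,\phi)$ on $\nu$ really does induce the boundary condition on $\rho$ that is intended by the statement ``inherited from $(\psi,\phi)$'', but this is guaranteed by \eqref{rcsi}, which exhibits $\rho^{\psi,\phi}$ as the edge marginal of $\nu^{\psi,\phi}$.
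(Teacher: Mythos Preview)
Your proposal is correct and follows essentially the same approach as the paper: lift $g$ to the joint space, identify $\PSWE g$ with $KT\hat g$ as a function of $A$ alone, and then apply the entropy contraction for $KT$ from Theorem~\ref{th:swtheorem} together with Lemma~\ref{lem:mix}. In fact, your identification of $KT$ (rather than $TK$) as the operator that projects to $\PSWE$ on edge-only functions is the correct one, and your write-up is slightly cleaner than the paper's, which has a couple of $TK$/$KT$ transpositions before arriving at the same conclusion.
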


\begin{proof}
	If $f$ depends only on the edge configuration, then 
	\begin{align}
	\label{eq:rc-sw}
	\PSWE f (A)=  \nu[\nu(f\tc\si)\tc A] = TKf (\si,A).
	\end{align}
	Here and below,  with slight abuse of notation, if a function $f$ on the joint space depends only on the edge configuration, we again write $f$ for the corresponding (projection) function on edges. Therefore, we have $\ent_\rho(\PSWE f) = \ent_\nu( TK f)$. 
	More precisely, for any $f\geq 0$ depending only on the edge  configuration, and such that  
	$\rho[f]=\nu[f]=1$, one has
	\begin{align*}
	\ent_\rho(\PSWE f) = \rho[(\PSWE f) \log (\PSWE f) ]= \nu [(TKf)\log( TK f)] = \ent_\nu(KTf).  
	\end{align*}
	Theorem \ref{th:swtheorem} says that, for any function $f$ in the joint space, one has
	$$
	\ent_\nu[KTf]\leq (1-\d)\ent_\nu(f).
	$$  
	In particular, for our $f$,
	\begin{align*}
	\ent_\rho (\PSWE f) \leq (1-\d) \ent_\nu(f) = (1-\d)\ent_\rho( f).  
	\end{align*}
	This is the desired discrete time entropy decay for $\PSWE$ in the edge space. 
\end{proof}

\begin{remark}
	The same argument in the previous proof applies to the spin dynamics. 
	In particular, if $g$ is a function depending only on the spin configuration, then
	$\PSW g (\s)= KTg (\si,A)$. Repeating the previous steps with $KT$ in place of $TK$ one has discrete time entropy decay with rate $\d$ for the SW dynamics on spin configurations. This provides an alternative view of the proof of Theorem~\ref{thm:intro:sw} as a corollary of Theorem~\ref{th:swtheorem} for the joint space.
\end{remark}

In $\bbZ^2$, we can take advatange of self-duality of the random-cluster model to obtain bounds for the SW dynamics in the low temperature regime.  

\begin{theorem}
	\label{thm:sw-rc-lt}
	In an $n$-vertex square region of $\bbZ^2$ with free or wired boundary conditions, for all integer $q \ge 2$ and all $p > p_c(q)$, 
	there exists a constant $\d>0$ such that
	for all functions $f:\{0,1\}^\EdgeSet \mapsto \bbR_+$
	\begin{align*}
	\ent_\rho( \PSWE f) \leq \left(1-\frac{\d}{n}\right) \ent_\rho(f).
	\end{align*}
	In particular, the mixing time of the SW dynamics on random-cluster configurations satisfies $\tmix(\PSWE)=O(n\log n)$.
\end{theorem}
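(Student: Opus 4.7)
The strategy exploits the planar self-duality of the random-cluster model on $\bbZ^2$: for $p > p_c(q)$, the dual parameter $p_\dual = q(1-p)/(q(1-p)+p)$ lies in the subcritical regime $p_\dual < p_c(q)$, so the dual model sits at a high temperature where SSM holds. The primal measure $\rho$ with free (resp., wired) boundary is in measure-preserving bijection with the dual measure $\rho_\dual$ with wired (resp., free) boundary, and $\rho_\dual$ can be realized as the edge marginal of a dual joint measure $\nu_\dual$ under an admissible boundary condition, as described earlier in this section. Consequently, the entropy-contraction machinery developed in the paper---which requires SSM---applies on the dual side; the task is to transfer the conclusion back to the primal, which is where the $1/n$ factor in the rate originates.

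My plan is to establish a log-Sobolev inequality for $\PSWE$ with constant $\Omega(1/n)$ with respect to $\rho$, from which Lemma~\ref{lem:miclo} (using reversibility of $\PSWE$) yields the claimed discrete entropy decay, and Lemma~\ref{lem:mix} then gives $\tmix(\PSWE) = O(n\log n)$. I would obtain the LSI in two steps. First, by the classical log-Sobolev results for Glauber-type dynamics on spin systems with SSM (e.g., \cite{SZ,Cesi}) applied on the dual Potts model---and more concretely by applying the ideas of Theorem~\ref{cor:loc:intro} on the dual joint space---one obtains LSI with constant $\Omega(1/n)$ for the single-edge heat-bath random-cluster dynamics on the dual. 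Transferring through the duality bijection yields the same LSI with constant $\Omega(1/n)$ for the single-edge heat-bath dynamics $\PHB$ on the primal measure $\rho$. Second, I would compare $\PSWE$ to $\PHB$ via a Dirichlet-form inequality $\cD_{\PHB}(\sqrt f,\sqrt f) \leq K\cdot \cD_{\PSWE}(\sqrt f,\sqrt f)$ for some constant $K=K(p,q)$, which gives $\mathrm{LSI}(\PSWE) \geq \mathrm{LSI}(\PHB)/K = \Omega(1/n)$.

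The main technical obstacle is establishing this Dirichlet-form comparison with a constant $K$ independent of $n$. Heuristically, each SW step performs heat-bath updates on all monochromatic edges simultaneously, so a given edge $e$ is updated by $\PSWE$ with probability $\Omega(|\EdgeSet|^{-1})$ conditional on $e$ being monochromatic in the current spin configuration; formalizing this via Ullrich-style canonical paths in the joint space should yield $K=O(1)$. An alternative route, bypassing the LSI comparison altogether, is to combine the factorization framework of Section~\ref{sec:factorization} with a duality-based decomposition of $\ent_\nu(f)$ to directly establish a weak spin/edge factorization for the primal joint measure $\nu$ with constant $Cn$ (rather than $C$), and then apply the argument of Lemma~\ref{lem:main-intro-se-to-mixing} to deduce the desired entropy decay at rate $1/(Cn)$ without going through LSI.
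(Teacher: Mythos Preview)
Your proposal is essentially the paper's proof: the paper also establishes an LSI for $\PSWE$ with constant $\Omega(1/n)$ by (i) proving the Dirichlet-form comparison $\cD_{\PSWE}(\sqrt f,\sqrt f)\ge c\,\cD_{\PHB}(\sqrt f,\sqrt f)$ (Lemma~\ref{lemma:df1}), (ii) proving an LSI for $\PHB$ via duality and the high-temperature factorization \eqref{swent106} on the dual joint space (Lemma~\ref{lemma:df2}), and then applying Lemma~\ref{lem:miclo} and Lemma~\ref{lem:mix}.

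The one place where you overcomplicate things is the Dirichlet-form comparison, which you flag as the ``main technical obstacle'' and propose to attack via Ullrich-style canonical paths. In fact it is a two-line argument: since $\PSWE f(A)=\nu[\nu[f\tc\si]\tc A]$, one has $\cD_{\PSWE}(f,f)=\nu[\var_\nu(f\tc\si)]$, and by monotonicity of conditional variance $\var_\nu(f\tc\si)\ge \var_\nu(f\tc\si,A_{\bbE\setminus e})$ for every edge $e$; averaging over $e$ gives $\cD_{\PSWE}(f,f)\ge \cD_{\PSB}(f,f)$, and the single-bond and heat-bath chains are trivially comparable up to constants depending only on $p,q$. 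Your heuristic (``a given edge is updated with probability $\Omega(|\bbE|^{-1})$'') actually undersells the situation: an SW step resamples \emph{all} edges given $\si$, so no $1/n$ loss occurs here. The $1/n$ in the final rate comes entirely from the LSI constant of the single-site heat-bath chain, not from the comparison.
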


Let $G=(V,\EdgeSet)$ where $V$ is $n$-vertex square region of $\bbZ^2$.
Let $\r := \r_{G,p,q}^\theta$ where $\theta \in \{0,1\}$ and
let $\PHB$ be the transition matrix of the heat-bath Glauber dynamics on $G$.
This is the standard Markov chain that, from a random-cluster configuration $A_t \subseteq \EdgeSet$, transitions to a new configuration $A_{t+1}\subseteq \EdgeSet$ as follows:
\begin{enumerate}
	\item choose an edge $e\in \EdgeSet$ uniformly at random;
	\item let $A_{t+1} = A_t \cup \{e\}$ with probability
	$$
	\frac{\rho(A_t \cup \{e\})}{\rho(A_t \cup \{e\})+\rho(A_t \setminus \{e\})} = \left\{\begin{array}{ll}
	\frac{p}{q(1-p)+p} & \mbox{if $e$ is a ``cut-edge'' in $(V,A_t)$;} \\
	p & \mbox{otherwise;}
	\end{array}\right.
	$$
	\item otherwise, let $A_{t+1} = A_t \setminus \{e\}$.
\end{enumerate}
We say $e$ is a {\it cut-edge} in $(V,A_t)$ if the number of connected components in $A_t\cup \{e\}$ and $A_t \setminus \{e\}$ differ.
$\PHB$ is (by design) reversible with respect to $\rho$.
It is also straightforward to check that with the free (resp., wired) boundary condition and parameters $p$ and $q$, for any pair of configurations $A$ and $B$, we have
$\PHB(A,B) = \PHB^\dual(A_\dual,B_\dual)$, where $\PHB^\dual$ denotes the transition matrix of the heat-bath chain on $\hat G_\dual$ with wired (resp., free) boundary condition and paramaters $p_\dual$ and $q$. 
%(Recall that the heat-bath random-cluster Glauber dynamics is the standard Markov chain that picks an edge $e$ uniformly at random and updates the state of $e$ with a sample from the conditional distribution at $e$ given the configuration on $\EdgeSet\setminus\{e\}$.)

Theorem~\ref{thm:sw-rc-lt} follows from the following two results.

\begin{lemma}
	\label{lemma:df1}
	There exists a constant $c > 0$ such that,
	for every function $f: \{0,1\}^{\EdgeSet} \mapsto \bbR$,
	$$
	\mathcal D_{\PSWE} (f,f) \ge c\cdot \mathcal D_{\PHB} (f,f).
	$$
\end{lemma}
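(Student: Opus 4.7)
The plan is to prove the inequality by a direct comparison of the two Dirichlet forms written in their natural variance representations. First I would rewrite $\mathcal D_{\PSWE}$: using that one step of $\PSWE$ from $A$ consists in sampling $\sigma\sim\nu(\cdot\mid A)$ followed by $B\sim\nu(\cdot\mid\sigma)$, together with the identity $\rho(A)\nu(\sigma\mid A)=\nu(\sigma,A)$, a short manipulation of $\tfrac12\sum_{A,B}\rho(A)\PSWE(A,B)(f(A)-f(B))^2$ yields
\begin{align*}
\mathcal D_{\PSWE}(f,f) \;=\; \nu\!\left[\var_\nu(f\mid\sigma)\right].
\end{align*}
On the other side, the heat-bath chain has its standard single-edge form
\begin{align*}
\mathcal D_{\PHB}(f,f) \;=\; \frac{1}{|\EdgeSet|}\sum_{e\in\EdgeSet}\rho\!\left[\var_\rho(f\mid A_{\EdgeSet\setminus e})\right].
\end{align*}

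The main bridge between these two expressions is the law of total variance applied under $\nu(\cdot\mid\sigma)$: for every edge $e$, $\var_\nu(f\mid\sigma)\ge \nu[\var_\nu(f\mid\sigma,A_{\EdgeSet\setminus e})\mid\sigma]$. Averaging this over $e\in\EdgeSet$ supplies exactly the $1/|\EdgeSet|$ normalization present in $\mathcal D_{\PHB}$, and reduces the lemma to the pointwise comparison
\begin{align*}
\nu\!\left[\var_\nu(f\mid\sigma,A_{\EdgeSet\setminus e})\right]\;\geq\; c\,\rho\!\left[\var_\rho(f\mid A_{\EdgeSet\setminus e})\right],
\end{align*}
to be proved for every $e$ with a constant $c=c(p,q)>0$ independent of $n$ and of the (admissible) boundary condition.

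The final step is to verify this pointwise comparison by a direct computation. Since $A_e$ is the only remaining random variable on each side, both terms take the form $(\text{coefficient})\cdot(\Delta_e f)^2$, where $\Delta_e f = f(A_{\EdgeSet\setminus e}\cup\{e\}) - f(A_{\EdgeSet\setminus e}\setminus\{e\})$, and the coefficient depends only on whether the endpoints $u,v$ of $e$ are already connected in $A_{\EdgeSet\setminus e}$. On the right, the coefficient is the familiar $p(1-p)$ when $u,v$ are connected and $pq(1-p)/(p+q(1-p))^2$ otherwise, determined by the conditional law of $A_e$ under $\rho$. On the left, integrating $A_e$ against $\rho(\cdot\mid A_{\EdgeSet\setminus e})$ and using that $\Pr_\nu[\sigma_u=\sigma_v\mid A]\in\{1,1/q\}$ according to whether $u,v$ are connected in $A$, yields $p(1-p)$ in the connected case and $p(1-p)/(p+q(1-p))$ otherwise. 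The ratio of left to right is then $1$ in the first case and $(p+q(1-p))/q\in(0,1)$ in the second, so the inequality holds with $c=(p+q(1-p))/q$. The main obstacle is this bookkeeping of the interaction between the spin-agreement event $\{\sigma_u=\sigma_v\}$ and the conditional law of $A_e$; once the two connectivity cases are separated, the verification reduces to elementary algebra.
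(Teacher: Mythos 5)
Your proposal is correct and is essentially the same argument as the paper's: both begin from the identity $\mathcal D_{\PSWE}(f,f)=\nu[\var_\nu(f\mid\sigma)]$, drop to the averaged single-edge conditional variance $\frac{1}{|\EdgeSet|}\sum_e\nu\left[\var_\nu(f\mid\sigma,A_{\EdgeSet\setminus e})\right]$ via monotonicity of variance, and then compare this edge-by-edge to $\mathcal D_{\PHB}$ with a constant depending only on $p,q$. The only cosmetic difference is that the paper names this intermediate quantity as the Dirichlet form of the single-bond dynamics $\PSB$ and invokes the two-sided transition-matrix comparison $c_1\PSB\le\PHB\le c_2\PSB$, whereas you carry out the equivalent Bernoulli-variance ratio calculation directly; the constants that emerge are the same.
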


\begin{lemma}
	\label{lemma:df2}
	For all integer $q \ge 2$ and all $p > p_c(q)$, there exists a constant $\delta > 0$ such that,
	for every function $f: \{0,1\}^{\EdgeSet} \mapsto \bbR_+$,
	$$
	\mathcal D_{\PHB} (\sqrt{f},\sqrt{f}) \ge \frac{\delta}{n} \cdot \ent_\rho(f).
	$$
\end{lemma}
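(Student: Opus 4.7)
The plan is to use planar duality to reduce the claim to the high-temperature regime, where SSM holds, and then establish the classical LSI by a block-dynamics argument.

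Step 1 (duality reduction). On $\bbZ^2$ with free or wired boundary, planar duality gives the bijection $\rho_{G,p,q}^\theta(A)=\rho_{\hat G_\dual,p_\dual,q}^{1-\theta}(A_\dual)$ between random-cluster configurations on $G$ and on $\hat G_\dual$, and the heat-bath Glauber dynamics commutes with this correspondence in the sense that $\PHB(A,B)=\PHB^\dual(A_\dual,B_\dual)$, as was already observed in the text. Setting $f_\dual(A_\dual):=f(A)$, both the Dirichlet form $\mathcal D_{\PHB}(\sqrt f,\sqrt f)$ and the entropy $\ent_\rho(f)$ are preserved under this correspondence. Since $p>p_c(q)$ is equivalent to $p_\dual<p_c(q)$, it therefore suffices to prove the LSI with constant $\delta/n$ for $\PHB^\dual$ at $p_\dual<p_c(q)$ on $\hat G_\dual$ with the opposite (wired or free) boundary condition.

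Step 2 (LSI under SSM). For $p_\dual<p_c(q)$ and integer $q\ge 2$ on $\bbZ^2$, SSM holds \cite{BDC,KA1,MOS}. We establish the LSI with constant $\Omega(1/n)$ for $\PHB^\dual$ by a classical Martinelli-Olivieri / Cesi block decomposition \cite{MOII,Cesi}. Partition $\hat G_\dual$ into overlapping blocks of constant side length $L$; conditionally on any exterior edge configuration, the random-cluster Glauber dynamics restricted to a block lives on a state space of size at most $2^{O(L^d)}$ and therefore satisfies an LSI with an $L$-dependent, $n$-independent constant. A covering argument in the style of \cite{MOII,Cesi} then lifts these local LSIs to a global one with constant $\Omega(1/n)$, provided one has an approximate factorization of the entropy of $\rho$ over the blocks. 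To obtain the needed factorization we invoke the joint-space framework of Section~\ref{sec:factorization}: choose a monochromatic admissible boundary condition on the joint space whose edge marginal is the desired wired/free random-cluster measure (cf.~Figure~\ref{fig7.1} and~\eqref{rcsi}), derive a block factorization statement for the joint measure from SSM in the spirit of Theorem~\ref{thm:cp} and Lemma~\ref{lemma:main-intro-ad}, and project onto edge-measurable functions.

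Main obstacle. The principal difficulty is that the spin/edge factorization machinery and Theorem~\ref{cor:loc:intro} only yield a \emph{modified} LSI for the random-cluster Glauber dynamics, whereas Lemma~\ref{lemma:df2} requires the classical LSI in the $\sqrt f$ form, which (in combination with Lemma~\ref{lemma:df1}) is then converted via Miclo's Lemma~\ref{lem:miclo} into discrete-time entropy decay for $\PSWE$. Closing the MLSI-to-LSI gap forces us to carry out the block decomposition directly at the level of variances of $\sqrt f$ rather than of the function $f$ itself. The key technical point is to use SSM to make the per-block LSI estimates uniform in the exterior edge configuration and to absorb the nonlocal factor $q^{c(A)}$ in $\rho$; in particular, SSM yields bounded ratios of conditional partition functions across block boundaries, which is precisely what is needed to make the usual variance/entropy comparison on constant-size blocks hold with an $L$-dependent but $n$-independent constant.
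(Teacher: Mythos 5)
Your duality reduction (Step~1) matches the paper exactly, and you correctly identify both that the target is now a classical LSI for $\PHB^\dual$ at $p_\dual<p_c(q)$ and that the modified-LSI machinery does not directly yield it. But your Step~2 is vague where the paper is precise, and it misses the simpler route the paper actually takes.

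The paper does not perform a constant-size block decomposition of the random-cluster space in the Martinelli--Olivieri/Cesi style. Instead it works entirely at the \emph{single-edge} level and sidesteps the MLSI-to-LSI conversion you worry about. Concretely: it first compares $\cD_{\PHB^\dual}$ with the single-bond Dirichlet form $\cD_{\PSB^\dual}(\sqrt f,\sqrt f)=\frac{1}{|\bbE|}\sum_e\nu_\dual[\var_{\nu_\dual}(\sqrt f\mid\s,A_{\bbE\setminus e})]$ via the pointwise transition-matrix comparison~\eqref{dirian6}. Then it converts variance of $\sqrt f$ into entropy of $f$ \emph{only} for the conditional measures $\nu_\dual(\cdot\mid\s,A_{\bbE\setminus e})$, which are supported on a two-point space, so $\var(\sqrt f\mid\cdot)\ge C^{-1}\ent(f\mid\cdot)$ is an immediate single-site LSI with an $n$-independent constant (this is the step where you would need a per-block LSI; at a single edge it is automatic). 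Finally it invokes the already-established single-site entropy factorization~\eqref{swent106} from the proof of Theorem~\ref{cor:loc:intro} (which is itself a consequence of even/odd factorization under SSM and Lemmas~\ref{lem:tensorx}--\ref{lem:tensor}), and observes that for $f$ depending only on edges the spin half of that inequality vanishes, leaving
\[
\sum_{e\in\bbE}\nu_\dual\!\left[\ent_{\nu_\dual}(f\mid\s,A_{\bbE\setminus e})\right]\ \ge\ \delta_1\,\ent_{\nu_\dual}(f)\ =\ \delta_1\,\ent_{\rho_\dual}(f).
\]
Chaining these gives $\cD_{\PHB^\dual}(\sqrt f,\sqrt f)\ge\frac{\delta_2}{n}\ent_{\rho_\dual}(f)$, and duality finishes the proof.

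The gap in your argument is precisely the one you flag as the ``main obstacle'': you outline a block covering argument together with an appeal to ``bounded ratios of conditional partition functions'' but never actually supply the mechanism that produces a classical LSI rather than an MLSI, nor how to handle the nonlocal factor $q^{c(A)}$ across block boundaries in a covering scheme. These are genuine issues for the route you sketch. The paper's choice of the single-edge decomposition is what makes those issues disappear: the conditional measure at a single edge is on a constant-size state space (so its LSI is free), and the entropy factorization~\eqref{swent106} packages all the SSM work that your covering argument would otherwise have to do. You should replace your Step~2 with a direct invocation of~\eqref{swent106} applied to edge-only functions, preceded by the single-edge variance-to-entropy comparison.
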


\begin{proof}[Proof of Theorem~\ref{thm:sw-rc-lt}]
	Lemmas~\ref{lemma:df1} and~\ref{lemma:df2} imply
	\begin{equation}
	\label{dirian16}
	\mathcal D_{\PSWE} (\sqrt{f},\sqrt{f}) \ge \frac{c \delta}{n} \cdot \ent_\rho(f).
	\end{equation}
	In words, this says that the SW dynamics on random-cluster configurations when $p > p_c(q)$ satisfies a standard log-Sobolev inequality with constant $\frac{c \delta}{n}$. An inequality of Miclo relating the standard log-Sobolev inequality and discrete time entropy decay (see Proposition 6 in~\cite{Miclo}) shows that \eqref{dirian16} implies the entropy decay bound 
	$$
	\ent_\r(\PSWE f) \leq \left(1-\frac{\d c}{n}\right)\ent_\r(f),
	$$
	and the mixing time bound follows from Lemma~\ref{lem:mix} and Remark~\ref{rmk:entropy-mixing} since $\PSWE = \PSWE^*$.
\end{proof}	

It remains to prove Lemmas \ref{lemma:df1} and~\ref{lemma:df2}.
We note 
that a version of the comparison inequality in Lemma~\ref{lemma:df1}
was proved in~\cite{Ullrich1} (see Theorem~4.8 there),  but it is stated for the spectral gap under the free boundary condition. 

In both of these proofs, we consider the single-bond variant of the Glauber dynamics.
In one step of this chain every connected component is assigned a spin from $[q]$ uniformly at random; a random edge $e$ is then chosen and if the endpoints of $e$ are monochromatic, then the edge is added to the configuration with probability $p$
and deleted otherwise. The state of $e$ does not change if its endpoints are bi-chromatic.
Note that this chain is the projection onto edges of the local dynamics on the joint space, see~\eqref{locdyn}; in particular, the update at the edge $e$ corresponds to $W_e$.
Let $\PSB$ denote the transition matrix of the single bond dynamics, which is reversible with respect to $\r$.
The Dirichlet form associated to this chain satisfies
	\begin{align}\label{dirian4}
	\cD_{\PSB}(f,f)
	&= \langle (I-\PSB)f,f\rangle_\r 
	=\r\left[((I-\PSB)f) \cdot f\right] 
	=\frac1{|\bbE|}\sum_{e\in\bbE}\nu\left[\var_\nu(f\tc\si,A_{\bbE\setminus{e}})\right]
	%\\&=\frac12\sum_{\eta,\eta'}\r(\eta)\PSB(\eta,\eta')(f(\eta)-f(\eta'))(g(\eta)-g(\eta')).
	\end{align}
since 
$$
\PSB f (A)=\frac1{|\bbE|}\sum_{e\in\bbE} \nu\left[\nu[f\tc\si,A_{\bbE\setminus{e}}]\tc A\right],
$$	
where with a slight abuse of notation (here and below) we use $f$ also for the ``lift'' of $f$ to the joint space.

We note that for some constants $c_i=c_i(q,p)>0$, $i=1,2$,
\begin{align*}
c_1\PSB(A,B)\leq \PHB(A,B)\leq c_2\PSB(A,B) 
\end{align*}
for all random-cluster configurations $A,B$. Therefore the same bounds apply to the Dirichlet forms: 
\begin{align}\label{dirian6}
c_1\cD_{\PSB}( f, f)\leq \cD_{\PHB}( f, f)\leq c_2\cD_{\PSB}( f, f), 
\end{align}
for any function $f: \{0,1\}^\EdgeSet \mapsto \bbR$.

\begin{proof}[Proof of Lemma~\ref{lemma:df1}]
The Dirichlet form associated with $\PSWE$ is given by 
\begin{align*}
%\label{dirian1}
\cD_{\PSWE}(f,g)
&=\langle (I-\PSWE)f,g\rangle_\r = \r\left[((I-\PSWE)f) \cdot g\right],
\end{align*}
and since $\PSWE f(A) = \nu[\nu[f\tc\s]\tc A]$, we obtain
\begin{align*}
\cD_{\PSWE}(f,f) & = \nu\left[(f-\nu[\nu[f\tc\s]\tc A]) \cdot f\right] =\nu\left[(f-\nu[f\tc\si]) \cdot f\right]
=\nu\left[\var_\nu(f\tc\si)\right].
\end{align*}
Then, for any function $f\geq 0$, 
\begin{align*}
%\label{dirian7}
\cD_{\PSWE}(\sqrt f,\sqrt f)&=
\nu\left[\var_\nu(\sqrt f\tc\si)\right]\\
&\geq \frac1{|\bbE|}\sum_{e\in\bbE}\nu\left[\var_\nu(\sqrt f\tc\si,A_{\bbE\setminus{e}})\right]\\
&= \cD_{\PSB}(\sqrt f,\sqrt f), 
\end{align*}
where we have used~\eqref{dirian4} and the fact that, for any $e\in\bbE$,
$$
\nu\left[\var_\nu(\sqrt f\tc\si)\right]\geq \nu\left[\var_\nu(\sqrt f\tc\si,A_{\bbE\setminus{e}})\right]
$$
by monotonicity of the variance functional.
The result then follows from~\eqref{dirian6}.
\end{proof}

\begin{proof}[Proof of Lemma~\ref{lemma:df2}]
	By duality (see discussion at the beginning of the section), we have
	\begin{align}\label{dirian10}
	\cD_{\PHB}(\sqrt f,\sqrt f)=\cD_{\PHB^\dual}(\sqrt {f_\dual},\sqrt {f_\dual}),
	\end{align}
	where $f_\dual$ is the function such that $f_\dual(A_\dual) = f(A)$
	and $\PHB^\dual$ is the transition matrix corresponding to the dual of $\r$. 
	
	Thus, if $\cD_{\PHB}$ is at low temperature ($p > p_c(q)$), then $\cD_{\PHB^\dual}$ is at high temperature  ($p < p_c(q)$). Moreover, from~\eqref{dirian4} and~\eqref{dirian6},
	\begin{align*}
	\cD_{\PHB^\dual}(\sqrt  f,\sqrt  f)&\geq c_1\cD_{\PSB^\dual}(\sqrt  f,\sqrt f)
	=\frac{c_1}{|\bbE|}\sum_{e\in\bbE}\nu_\dual\left[\var_{\nu_\dual}(\sqrt  f\tc\si,A_{\bbE\setminus{e}})\right],
	\end{align*}
	where $\nu_\dual$ is the dual joint measure. 
	Specifically, if $\r_\dual$ is the dual measure of $\rho$
	(and the stationary distribution of $\PHB^\dual$), 
	$\nu_\dual$ is a joint measure whose edge marginal is $\rho_d$.
	Observe that since $\rho$ is a random-cluster distribution on the square region $V = \{0,\dots,\ell\} \times \{0,\dots,\ell\}$ with free (or wired) boundary condition, $\rho_d$ is a distribution over $\hat V_\dual = \{-1,\dots,\ell\} \times \{-1,\dots,\ell\}+(\frac 12,\frac 12)$ with wired (or free) boundary condition. As discussed earlier, in either case there exists a joint measure with an admissible boundary condition whose edge marginal is $\rho_d$.  
	
	Observe also that, as before, with a slight abuse of notation,
	we also use $f$ for the ``lift'' of $f$ to the joint space.
	Now, as in \eqref{rough1} we know that for some constant $C=C(p,q)$, for all $e\in\bbE$ and for all $f\geq 0$,
	\begin{align*}
	\var_{\nu_\dual}(\sqrt  f\tc\si,A_{\bbE\setminus{e}})\geq C^{-1}  \ent_{\nu_\dual}( f\tc\si,A_{\bbE\setminus{e}}).
	\end{align*}
	Therefore,
	\begin{align}\label{dirian13}
	\cD_{\PHB^\dual}(\sqrt  f,\sqrt  f)&\geq \frac{c_1C^{-1}}{|\bbE|}\sum_{e\in\bbE}{\nu_\dual}\left[\ent_{\nu_\dual}( f\tc\si,A_{\bbE\setminus{e}})\right].
	\end{align}
	Since for $p < p_c(q)$ and $q \ge 2$ the SSM property holds, we can use \eqref{swent106} to obtain
	\begin{align}\label{dirian14}
	\sum_{e\in\bbE}{\nu_\dual}\left[\ent_{\nu_\dual}( f\tc\si,A_{\bbE\setminus{e}})\right]\geq \d_1 \ent_{\nu_\dual} (f).
	\end{align}
	Indeed, if $f$ is a function of edges only then the first term on the right hand side of \eqref{swent106} is zero. Moreover
	for such an $f$ we have $\ent_{\nu_\dual} (f)=\ent_{\r_\dual} (f)$. Summarizing, we have proved, for all $f\geq 0$,
	\begin{align}\label{dirian15}
	\cD_{\PHB^\dual}(\sqrt  {f_\dual},\sqrt  {f_\dual})&\geq \frac{\d_2}n\,\ent_{\r_\dual} (f_\dual),
	\end{align}
	for a suitable constant $\d_2  > 0$. The result follows from \eqref{dirian10} and the fact that 
	$\ent_{\r_\dual} (f_\dual) = \ent_{\r} (f)$.
\end{proof}	

\begin{remark}
	We remark that \eqref{dirian15} says that the heat-bath Glauber dynamics for the random-cluster model in square regions of $\bbZ^2$ with free or wired boundary conditions satisfies the standard log-Sobolev inequality with constant $\d/n$ for some $\d=\d(p,q)$ for all $p\neq p_c(q)$. This bound is optimal up to a multiplicative constant, as can be seen by choosing an appropriate test function.
\end{remark}

\subsection{Decay for spins from decay for edges and vice versa}
\label{subsec:comparison}

We will use Theorem~\ref{thm:sw-rc-lt} to deduce our low temperature results for the SW dynamics on spin configurations. We do so using the following entropy contraction ``transfer'' result between the spin and edge variants of the SW dynamics.
A similar comparison result for the spectral gap was provided by Ullrich~\cite{Ullrich1}.

\begin{lemma}\label{pro:SWs}
	Suppose we know that the SW dynamics on edges with invariant measure $\r$, corresponding to an $n$-vertex square region $V$ with some boundary condition, has entropy decay with rate $\d$. Then the SW dynamics on spins on $V$, with any boundary condition inherited from a joint measure $\nu$ whose marginal on edges equals $\r$, satisfies the same entropy decay (asymptotically) and has the same mixing time bound $\tmix = O(\d^{-1}\log n)$.  The same applies with the roles of spins and edges reversed.
\end{lemma}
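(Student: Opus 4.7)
The plan is to exploit the fact that the SW dynamics on spins and the SW dynamics on edges are two sides of the same underlying alternating chain on the joint space. Specifically, defining the (Markov) operators $U : L^2(\mu) \to L^2(\rho)$ by $Ug(A) = \nu[g\tc A]$ and $V : L^2(\rho) \to L^2(\mu)$ by $Vf(\sigma) = \nu[f\tc \sigma]$, the computation at \eqref{eq:rc-sw} shows that $\PSW = VU$ on functions of spins and $\PSWE = UV$ on functions of edges. Consequently,
\[
\PSW^{\,2} = V(UV)U = V\,\PSWE\, U, \qquad \PSWE^{\,2} = U(VU)V = U\,\PSW\, V.
\]
This identity is the key structural fact that lets us transfer entropy decay from one chain to the other (at the price of going to two steps).

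First I would prove the edges-to-spins direction. Assume $\ent_\rho(\PSWE f) \le (1-\delta)\ent_\rho(f)$ for every $f:\{0,1\}^{\bbE}\to\bbR_+$. Given $g:\Omega\to\bbR_+$ with $\mu[g]=1$, set $\phi := \PSWE(Ug)$, a function on edges. Then $\PSW^{\,2} g(\sigma) = V\phi(\sigma) = \nu[\phi\tc\sigma]$. By convexity of $x\log x$ applied to the conditional expectation $\nu[\,\cdot\tc\sigma]$,
\[
\ent_\mu(\PSW^{\,2}g) = \mu\!\left[\nu[\phi\tc\sigma]\log\nu[\phi\tc\sigma]\right] \le \mu\!\left[\nu[\phi\log\phi\tc\sigma]\right] = \rho[\phi\log\phi] = \ent_\rho(\phi),
\]
where we used $\mu[\nu[\phi\tc\sigma]] = \nu[\phi] = \rho[\phi]=1$. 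Now apply the hypothesis to get $\ent_\rho(\phi) \le (1-\delta)\ent_\rho(Ug)$, and finally use the decomposition \eqref{eq:total:entEdges} in the form $\ent_\rho(Ug) = \ent_\nu(\nu[g\tc A]) \le \ent_\nu(g) = \ent_\mu(g)$. Chaining these bounds yields
\[
\ent_\mu(\PSW^{\,2}g) \le (1-\delta)\ent_\mu(g).
\]

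Since $\PSW = \PSW^*$ with respect to $\mu$, iterating gives $\ent_\mu(\PSW^{\,2t}g) \le (1-\delta)^t \ent_\mu(g)$, which by Lemma~\ref{lem:mix} and Remark~\ref{rmk:entropy-mixing} (applied to the reversible chain $\PSW^{\,2}$) produces a mixing time bound of $O(\delta^{-1}\log n)$ for $\PSW$ itself. The reverse direction is entirely symmetric: using $\PSWE^{\,2} = U\PSW V$, the same two applications of Jensen's inequality (now for $\nu[\,\cdot\tc A]$) and the decomposition \eqref{eq:total:entSpins} yield $\ent_\rho(\PSWE^{\,2} f) \le (1-\delta)\ent_\rho(f)$ whenever $\PSW$ contracts entropy with rate $\delta$, and hence $\tmix(\PSWE) = O(\delta^{-1}\log n)$.

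There is no real obstacle here: the argument is a clean two-step transfer via the Edwards--Sokal coupling between the two marginals, and the only thing to check carefully is the order of the two Jensen applications (one to go from a marginal entropy to a joint-space entropy via $U$ or $V$, and one to push the outer conditional expectation inside $x\log x$). The only cost is an inessential factor of $2$ in the constant, reflected in the statement by ``satisfies the same entropy decay (asymptotically)''. Note finally that the argument is oblivious to the boundary condition: we only used $\nu$ and its two marginals, so it applies verbatim to any admissible joint boundary condition, which is exactly what is needed to deduce Theorem~\ref{cor:sw-lt:intro} from Theorem~\ref{thm:sw-rc-lt}.
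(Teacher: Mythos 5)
Your proof is correct and takes essentially the same route as the paper: both transfer the edge-side entropy contraction to the spin side through the Edwards--Sokal joint measure, sandwiching one application of the edge hypothesis between two Jensen inequalities for the conditional-expectation operators (your $U,V$; the paper's $K,T$ on the joint space, which appear with the labels swapped in its proof of the lemma). The only small refinement in the paper is to group as $\PSW^{\ell} = V\,\PSWE^{\ell-1}\,U$ rather than $\PSW^{2}=V\,\PSWE\,U$, which yields $(1-\delta)^{\ell-1}$ instead of $(1-\delta)^{\lfloor\ell/2\rfloor}$ (rate $\delta$ rather than $\delta/2$ asymptotically); this is immaterial for the $O(\delta^{-1}\log n)$ mixing bound but is the sharper reading of ``same entropy decay (asymptotically).''
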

\begin{proof}
	The assumption on $\r$ says that 
	\begin{align}\label{jj8}
	\ent_\r (\wt P_{SW}g) \leq (1-\d) \ent_\r( g),  
	\end{align}
	for any function $g=g(A)$, $A \subset \EdgeSet$. Recalling \eqref{eq:rc-sw} we see that
	\eqref{jj8} can be rewritten as 
	\begin{align}\label{jj9}
	\ent_\nu( TK g) \leq (1-\d) \ent_\nu(g),  
	\end{align}
	for any $g=g(A)$ and any joint measure $\nu$ such that the marginal on edges equals $\r$.
	Now, let $f=f(\si)$ be any function depending only on the spin configuration. Since $g=Tf$ depends only on the edge configuration, we have 
	\begin{align}\label{jj10}
	\ent_\nu (TKT f) \leq (1-\d) \ent_\nu (Tf).  
	\end{align}
	If we apply \eqref{jj10} with $f$ replaced by  $(KT)^{\ell-1} f$, then
	\begin{align}\label{jj11}
	\ent_\nu (T(KT)^{\ell} f) \leq (1-\d) \ent_\nu (T(KT)^{\ell-1} f),  
	\end{align}
	for any $\ell\in\bbN$. Iterating this inequality we find, for any $\ell\in\bbN$,
	\begin{align}\label{jj12}
	\ent_\nu( T(KT)^{\ell} f) \leq (1-\d)^\ell \ent_\nu (T f). 
	\end{align}
	Recalling that  $P^\ell_{SW}f =(KT)^\ell f$, from \eqref{jj12} we get
	\begin{align*}
	%\label{jj18}
	\ent_\mu(P^\ell_{SW}f) &= \ent_\nu( (KT)^{\ell} f)\\&
	= \ent_\nu (KT(KT)^{\ell-1} f)\\&
	\leq \ent_\nu(T(KT)^{\ell-1} f)\\&
	\leq (1-\d)^{\ell-1}\ent_\nu(Tf) \\&\leq (1-\d)^{\ell-1}\ent_\nu( f)
	= (1-\d)^{\ell-1}\ent_\mu(f),
	\end{align*}
	where the first inequality follows from~\eqref{conv7}.
	This shows that the discrete time entropy decay for SW on spins is asymptotically the same as the one assumed for SW on edges, and Lemma \ref{lem:mix} allows us to conclude the desired mixing time bound. 
	The same argument (with $KT$ replaced by $TK$) shows that if we assume an entropy decay for spins then we obtain (asymptotically) the same entropy decay for edges, and therefore the same mixing time bound.  
\end{proof}

We can now provide the proof of Theorem~\ref{cor:sw-lt:intro} from the introduction.

\begin{proof}[Proof of Theorem~\ref{cor:sw-lt:intro}]
	From the discussion at the beginning of Section~\ref{sec:rc}, note that there is an admissible boundary condition in the joint space for which the edge marginal is the random-cluster measure on a square region of $\bbZ^2$ with a wired boundary condition, and the spin marginal is the monochromatic boundary condition. 
	The result then follows from Theorem~\ref{thm:sw-rc-lt} and Lemma~\ref{pro:SWs}.
\end{proof}

\bibliographystyle{alpha}
%\bibliographystyle{plain}

%\bibliography{ssm_sw}

 \end{document}